\numberwithin{equation}{section}
\newtheorem{theorem}{Theorem}[section]
\newtheorem{lemma}[theorem]{Lemma}
\newtheorem{remark}[theorem]{Remark}
\newtheorem{proposition}[theorem]{Proposition}
\begin{document}

\begin{frontmatter}

\title{Global well-posedness and optimal time-decay of 3D full compressible Navier-Stokes system}

\author{Wenwen Huo}
\ead{huowenwen2023@163.com}
\address{School of Mathematics, Harbin Institute of Technology, Harbin 150001, P.R. China}

\author{Chao Zhang} \cortext[Chao Zhang]{Corresponding author: Chao Zhang}
\ead{czhangmath@hit.edu.cn}
\address{School of Mathematics and Institute for Advanced Study in Mathematics, Harbin Institute of
Technology, Harbin 150001, P.R. China}

\begin{abstract}
In this paper, we investigate the global well-posedness and optimal time-decay of classical solutions for the 3-D full compressible Navier-Stokes system, which is given by the motion of the compressible viscous and heat-conductive gases. First of all, we study the global well-posedness of the Cauchy problem to the system when the initial data is small enough. Secondly, we show the optimal decay rates of the higher-order spatial derivatives of the $\dot{H}^{-s}$ $\left(0\leq s<\frac{3}{2}\right)$ negative Sobolev norms. Finally, under the assumption that the initial data is bounded in $L^{1}$-norm, we establish the upper and lower bounds of the optimal decay rates for the classical solutions.
\end{abstract}

\begin{keyword}
Full compressible Navier-Stokes \sep Global existence \sep Optimal time-decay
\MSC[2010] 35Q30\sep 35Q35\sep 35B40
\end{keyword}

\end{frontmatter}

\section{Introduction}\label{sec1}
In this paper, we consider the global existence and optimal large-time behavior of classical solutions to the following full compressible Navier-Stokes equations:
\begin{equation}\label{equ1-1}
\left\{
  \begin{array}{ll}
    \partial_{t}\rho+{\rm div}(\rho u)=0, \quad  x\in\mathbb{R}^{3},\,\,t>0,\\
    \partial_{t}(\rho u)+{\rm div}(\rho u\otimes u)+\nabla P(\rho,\theta)={\rm div}\mathbb{S}(u),\\
    \partial_{t}(\rho E)+{\rm div}(\rho Eu)+{\rm div}(Pu)={\rm div}(\mathbb{S}(u)u)+\tilde{\kappa}\Delta \theta,
     \end{array}
\right.
\end{equation}
where $\rho$, $u=(u_{1},u_{2},u_{3})$ and $\theta$ are the density, velocity and the absolute temperature of the fluid, and positive constant $\tilde{\kappa}$ is the coefficient of the heat conduction. The total energy $E$ and stress tensor $\mathbb{S}(u)$ are represented by
\begin{equation*}
E=e+\frac{|u|^{2}}{2}, \quad \mathbb{S}(u)=\tilde{\mu}(\nabla u+(\nabla u)^{T})+\tilde{\lambda}{\rm div} u\mathbb{I},
\end{equation*}
where $\mathbb{I}$ is the unit matrix, $\tilde{\mu}$ and $\tilde{\lambda}$ are viscosity coefficients satisfying
\begin{equation*}
\tilde{\mu}>0, \quad \tilde{\lambda}+\frac{2}{3}\tilde{\mu}\geq0.
\end{equation*}
The pressure $P$ and specific internal energy $e$ are given by
\begin{equation*}
P=R\rho \theta, \quad e=\frac{R}{\gamma-1}\theta,
\end{equation*}
where $R>0$ is a generic gas constant and $\gamma>1$ is the adiabatic constant.

Furthermore, we consider the problem of \eqref{equ1-1} with the initial condition
\begin{equation}\label{equ1-2}
(\rho,u,\theta)|_{t=0}=(\rho_{0},u_0,\theta_0)(x),
\end{equation}
and the far-field behavior
\begin{equation}\label{equ1-3}
\lim_{|x|\rightarrow \infty}(\rho,u,\theta)=(\rho_{\ast},0,\theta_{\ast}).
\end{equation}

In recent decades, there have been significant developments on the well-posedness and large-time behavior of solutions to the full compressible Navier-Stokes system. In the absence of vacuum, Matsumura and Nishida \cite{MN2} first established the global well-posedness for the system under the assumption of small initial data in $H^{3}$. When the initial data and the external force are sufficiently small, Matsumura and Nishida \cite{MN3} investigated the initial boundary value problems on the half-space and the exterior domain of any bounded region. If there is no external force present, the optimal $L^{2}$ time-decay rate for a strong solution to the system was first obtained by Matsumura and Nishida \cite{MN1} and the optimal $L^{p}$ $(p\geq2)$ time-decay rate was derived by Ponce \cite{Po}. Subsequently, if the external forces existed, Duan et al. \cite{DUYZ} considered the large time behavior for the strong solution provided that the initial perturbation is small in $H^{3}(\mathbb{R}^3)\cap L^{1}(\mathbb{R}^3)$. For the system with spherically symmetric solutions, Jiang \cite{Ji1} proved the global existence of classical solutions with (large) initial data, and Jiang \cite{Ji3} showed that the symmetric solutions decay to a constant state exponentially as time goes to infinity. Furthermore, the existence, asymptotic behavior of the solutions (weak, strong or classical) to the system has been obtained. We refer the readers to \cite{Ji2, Ji4, JZ, Ka, LL} for the  one-dimensional space.

When the initial vacuum is allowed, the authors in \cite{Li1, Li2, LX1} discussed the global well-posedness of strong solutions to the system for the one-dimensional space. In the three-dimensional case, Huang-Li \cite{HL1, HL2} obtained the existence of global classical and weak solutions for the system with large oscillations and established the blow-up criterion. When the domain is bounded, Wen-Zhu \cite{WZ1, WZ2} investigated the existence and uniqueness of global classical and strong solutions of the full compressible Navier-Stokes equations in one dimension and high dimension with large initial data. Later, Wen-Zhu \cite{WZ3} showed that the strong solution exists globally in time if the initial mass is small and obtained a surprisingly exponential decay rate of the strong solution in $\mathbb{R}^3$.

In addition, some important progress on the decay estimate has been achieved by a series of papers \cite{CLT, GWY, HHW, KS}. More precisely, for an exterior domain in $\mathbb{R}^{3}$, Kobayashi and Shibata \cite{KS} showed the $L_{q}$--$L_{p}$ estimates for solutions to the linearized equations and showed an optimal decay estimate for solutions to the nonlinear problem. Chen-Li-Tang \cite{CLT} established the lower bound of the decay rate to the linearized problem of the system. He-Huang-Wang \cite{HHW} obtained the uniform-in-time bound and large-time behavior of the system in the whole space. Besides, Gao-Wei-Yao \cite{GWY} proved the convergence of the global large solution to its associated constant equilibrium state with an explicit decay rate, and showed that the decay rate of the first order spatial derivative of large solution converging to zero in $L^{2}$-norm is $(t+1)^{-\frac{5}{4}}$.

In this paper, the first purpose is to investigate well-posedness of global classical solutions for the system under the assumptions of small initial data. Our second target is to study the optimal decay rate when the initial data $(\rho_{0}-\rho_{\ast},u_{0},\theta_{0}-\theta_{\ast})\in \dot{H}^{-s}(\mathbb{R}^3)$ for some $s\in[0,\frac{3}{2})$. If the initial data $(\rho_{0}-\rho_{\ast},u_{0},\theta_{0}-\theta_{\ast})$ belongs to $H^{N}\cap L^{1}$, the third purpose is to establish the upper and lower bounds of decay rate for the global solution itself and its spatial derivatives, and show that they are optimal.

Our main results are stated as follows.

\begin{theorem}\label{the1-1}
Assume that
\begin{equation*}
(\rho_{0}-\rho_{\ast},u_{0},\theta_{0}-\theta_{\ast})\in H^{N}(\mathbb{R}^3)
\end{equation*}
for an integer $N\geq3$, and there exists a small constant $\eta_{0}$ such that
\begin{equation}\label{equ1-4}
\left\|(\rho_{0}-\rho_{\ast},u_{0},\theta_{0}-\theta_{\ast})\right\|_{H^3}\leq \eta_{0}.
\end{equation}
Then the Cauchy problem \eqref{equ1-1}--\eqref{equ1-3} admits a unique solution $(\rho,u,\theta)$ satisfying
\begin{equation}\label{equ1-5}
\begin{split}
&\left\|(\rho-\rho_{\ast},u,\theta-\theta_{\ast})\right\|^{2}_{H^N}+
\int_0^t\left(\left\|\nabla \rho\right\|^{2}_{H^{N-1}}+\left\|\nabla u\right\|^{2}_{H^{N}}+\left\|\nabla \theta\right\|^{2}_{H^{N}}\right)d\tau\\
\leq& C\left\|(\rho_{0}-\rho_{\ast},u_{0},\theta_{0}-\theta_{\ast})\right\|^{2}_{H^N}
\end{split}
\end{equation}
for any $t>0$, where C is a positive constant independent of $\rho$, $u$ and $\theta$.
\end{theorem}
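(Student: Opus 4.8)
The plan is to combine a standard local existence theorem with uniform-in-time a priori estimates, closed by a continuity argument. First I would pass to the perturbation $(\varrho,u,\vartheta):=(\rho-\rho_\ast,u,\theta-\theta_\ast)$ and, using the continuity and momentum equations to reduce the total-energy equation to the internal-energy equation, rewrite \eqref{equ1-1} as a hyperbolic--parabolic system whose principal part is the linearization at $(\rho_\ast,0,\theta_\ast)$: $\partial_t\varrho+\rho_\ast\,\mathrm{div}\,u=g_1$, $\rho_\ast\partial_t u-\tilde\mu\Delta u-(\tilde\mu+\tilde\lambda)\nabla\mathrm{div}\,u+R\theta_\ast\nabla\varrho+R\rho_\ast\nabla\vartheta=g_2$, and $\frac{R\rho_\ast}{\gamma-1}\partial_t\vartheta-\tilde\kappa\Delta\vartheta+R\rho_\ast\theta_\ast\,\mathrm{div}\,u=g_3$, where the remainders $g_i$ collect the convective terms, the pressure nonlinearities and the quadratic viscous heating and vanish to second order at the origin. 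Local well-posedness in $H^N$ for $N\ge3$ is classical, so it suffices to establish \eqref{equ1-5} on any interval $[0,T]$ of existence under the a priori hypothesis $\sup_{[0,T]}\|(\varrho,u,\vartheta)\|_{H^3}\le\delta$ for a small $\delta$ to be chosen.

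For the a priori estimates I would first run the pure energy method. For each $k$ with $0\le k\le N$, apply $\nabla^k$ to the three equations and test them against $a\nabla^k\varrho$, $\nabla^k u$ and $b\nabla^k\vartheta$ respectively, with the constants $a,b>0$ chosen so that the first-order coupling terms between $\varrho$, $u$ and $\vartheta$ cancel after integration by parts; the viscosity and heat conduction then produce the dissipation of $u$ and $\vartheta$. Summing over $k$ gives
\[
\frac{d}{dt}\,\mathcal{E}_1(t)+c_1\bigl(\|\nabla u\|_{H^N}^2+\|\nabla\vartheta\|_{H^N}^2\bigr)\le C\,\|(\varrho,u,\vartheta)\|_{H^3}\,\mathcal{D}(t),
\]
where $\mathcal{E}_1(t)\sim\|(\varrho,u,\vartheta)\|_{H^N}^2$ and $\mathcal{D}(t):=\|\nabla\varrho\|_{H^{N-1}}^2+\|\nabla u\|_{H^N}^2+\|\nabla\vartheta\|_{H^N}^2$; the nonlinear contributions are estimated by Moser-type product and commutator inequalities together with the embedding $H^2(\mathbb{R}^3)\hookrightarrow L^\infty$, arranged so that only the low norm $\|(\varrho,u,\vartheta)\|_{H^3}$, and not the full $H^N$ norm, has to be small.

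This estimate carries no dissipation for $\varrho$, which I would recover from the momentum equation in the usual way: for $0\le k\le N-1$, differentiate $\int\nabla^k u\cdot\nabla\nabla^k\varrho\,dx$ in time and substitute $\partial_t\varrho$ from the continuity equation and $\partial_t u$ from the momentum equation; the pressure-gradient term then contributes $-c\,\|\nabla^{k+1}\varrho\|^2$, the remaining linear terms are controlled by $\|\nabla^{k+1}u\|^2+\|\nabla^{k+2}u\|^2+\|\nabla^{k+1}\vartheta\|^2$ and the nonlinear ones are cubic. Consequently, for a small parameter $\epsilon>0$ the modified functional $\mathcal{E}(t):=\mathcal{E}_1(t)+\epsilon\sum_{k=0}^{N-1}\int\nabla^k u\cdot\nabla\nabla^k\varrho\,dx$ is equivalent to $\|(\varrho,u,\vartheta)\|_{H^N}^2$ and satisfies
\[
\frac{d}{dt}\,\mathcal{E}(t)+c_2\,\mathcal{D}(t)\le C\,\|(\varrho,u,\vartheta)\|_{H^3}\,\mathcal{D}(t).
\]

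Choosing $\eta_0$, hence $\delta$, small enough that $C\,\|(\varrho,u,\vartheta)\|_{H^3}\le c_2/2$ on $[0,T]$, the right-hand side is absorbed and $\frac{d}{dt}\mathcal{E}(t)+\frac{c_2}{2}\mathcal{D}(t)\le0$; integrating in time and using $\mathcal{E}(t)\sim\|(\varrho,u,\vartheta)\|_{H^N}^2$ yields \eqref{equ1-5}, while the resulting bound improves the a priori $H^3$-smallness (given \eqref{equ1-4}), so the standard continuity argument extends the local solution globally; uniqueness follows from a Gronwall estimate for the difference of two solutions in a lower-order norm. I expect the main obstacle to be the top-order nonlinear estimates: after applying $\nabla^N$, every term must be shown to be bounded by a small multiple of $\mathcal{D}(t)$, which requires carefully arranging that the highest derivative always falls on a factor carrying full dissipation ($\nabla u$, $\nabla\vartheta$ or $\nabla\varrho$) while the remaining low-order factors are absorbed via the \emph{small} $H^3$ norm rather than the possibly large $H^N$ norm; the companion delicate point is the exact choice of the symmetrizing weights $a,b$ so that the linear coupling terms cancel identically in the basic energy identity.
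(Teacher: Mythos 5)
Your proposal is correct and follows essentially the same route as the paper: the paper symmetrizes by rescaling the unknowns to $(n,\omega,\vartheta)$ (equivalent to your choice of testing weights $a,b$), proves the $k$-th order energy inequalities with dissipation only in $\omega,\vartheta$ (Lemmas \ref{lem3-1}--\ref{lem3-2}, the latter being exactly your top-order refinement in which no $(N+1)$-st derivative of the density appears), recovers the density dissipation from the cross functional $\int\nabla^{k}\omega\cdot\nabla^{k+1}n\,dx$ (Lemma \ref{lem3-3}), and closes via a small multiple of the cross term, smallness of the $H^{3}$ norm only, and a continuity argument. No substantive differences.
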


\begin{theorem}\label{the1-2}
Let all the assumptions of Theorem \ref{the1-1}  be satisfied and assume that $(\rho_{0}-\rho_{\ast},u_{0},\theta_{0}-\theta_{\ast})\in \dot{H}^{-s}(\mathbb{R}^3)$ for some $s\in[0,\frac{3}{2})$. Then
\begin{equation}\label{equ1-6}
\left\|\Lambda^{-s}(\rho-\rho_{\ast})\right\|^2_{L^2}+\left\|\Lambda^{-s}u\right\|^2_{L^2}+
\left\|\Lambda^{-s}(\theta-\theta_{\ast})\right\|^2_{L^2}\leq C,
\end{equation}
and
\begin{equation}\label{equ1-7}
\left\|\nabla^{l}(\rho-\rho_{\ast},u,\theta-\theta_{\ast})\right\|^2_{H^{N-l}}\leq C(t+1)^{-(l+s)}
\end{equation}
for $l=0,1,\cdot\cdot\cdot,N$.
\end{theorem}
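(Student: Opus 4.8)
The plan is to combine the hyperbolic--parabolic energy estimates already used for Theorem~\ref{the1-1} with the negative Sobolev space method. Write $\varrho=\rho-\rho_{\ast}$ and $\vartheta=\theta-\theta_{\ast}$, reformulate \eqref{equ1-1} as a perturbation system around $(\rho_{\ast},0,\theta_{\ast})$, and normalize the constants. The first step is a hierarchy of energy--dissipation inequalities: from the computations underlying \eqref{equ1-5}---applying $\nabla^{k}$ for $l\le k\le N$, performing the usual $L^{2}$ estimates, and adding a small multiple of the cross terms $\sum_{l\le k\le N-1}\int\nabla^{k}u\cdot\nabla^{k+1}\varrho\,dx$ to recover the density dissipation that the continuity equation does not supply directly---one obtains, for each $0\le l\le N-1$, a functional $\mathcal{E}_{l}(t)$ equivalent to $\left\|\nabla^{l}(\varrho,u,\vartheta)\right\|^{2}_{H^{N-l}}$ and a dissipation $\mathcal{D}_{l}(t)$ controlling $\left\|\nabla^{l+1}(\varrho,u,\vartheta)\right\|^{2}_{H^{N-l-1}}$, with
\begin{equation*}
\frac{d}{dt}\mathcal{E}_{l}(t)+c\,\mathcal{D}_{l}(t)\le 0,
\end{equation*}
the nonlinear contributions being absorbed by the smallness \eqref{equ1-4}.

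The second step establishes the uniform bound \eqref{equ1-6}. Applying $\Lambda^{-s}$ to the perturbation system and pairing with $\Lambda^{-s}(\varrho,u,\vartheta)$ yields
\begin{equation*}
\frac{1}{2}\frac{d}{dt}\left\|\Lambda^{-s}(\varrho,u,\vartheta)\right\|^{2}_{L^{2}}+c\left\|\Lambda^{-s}\nabla(u,\vartheta)\right\|^{2}_{L^{2}}\le\mathcal{N}(t),
\end{equation*}
where $\mathcal{N}(t)$ collects terms of the form $\left\|\Lambda^{-s}(fg)\right\|_{L^{2}}\left\|\Lambda^{-s}(\varrho,u,\vartheta)\right\|_{L^{2}}$ with $f,g$ among $\varrho,u,\vartheta$ and their first derivatives. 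These are estimated via the Hardy--Littlewood--Sobolev inequality $\left\|\Lambda^{-s}h\right\|_{L^{2}}\le C\left\|h\right\|_{L^{6/(3+2s)}}$, H\"older's inequality, and the Sobolev embeddings available from the $H^{N}$ bound of Theorem~\ref{the1-1}. Integrating in time, using $\int_{0}^{\infty}\left\|\nabla(\varrho,u,\vartheta)\right\|^{2}_{H^{N-1}}\,d\tau<\infty$ from \eqref{equ1-5}, and running a continuity argument with the smallness of $\eta_{0}$ gives \eqref{equ1-6}.

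The third step proves \eqref{equ1-7} by an ODE argument. Given \eqref{equ1-6}, the Sobolev interpolation $\left\|\nabla^{l}f\right\|_{L^{2}}\le C\left\|\nabla^{l+1}f\right\|_{L^{2}}^{1-\theta_{l}}\left\|\Lambda^{-s}f\right\|_{L^{2}}^{\theta_{l}}$ with $\theta_{l}=1/(l+1+s)$, applied to $(\varrho,u,\vartheta)$ and combined with the first step, shows $\mathcal{E}_{l}\le C\mathcal{D}_{l}^{(l+s)/(l+1+s)}$, hence
\begin{equation*}
\frac{d}{dt}\mathcal{E}_{l}(t)+c\,\mathcal{E}_{l}(t)^{1+\frac{1}{l+s}}\le 0,
\end{equation*}
so that $\mathcal{E}_{l}(t)\le C(1+t)^{-(l+s)}$ for $l=0,1,\dots,N-1$; the top order $l=N$ is then recovered from a time-weighted estimate for $\left\|\nabla^{N}(\varrho,u,\vartheta)\right\|^{2}_{L^{2}}$ driven by the rate just obtained at level $N-1$.

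I expect the main obstacle to be the nonlinear estimate of the second step when $s\in(\tfrac12,\tfrac32)$: there the exponent $6/(3+2s)$ is too close to $1$ for the low-order Sobolev norms alone to control $\left\|\Lambda^{-s}(fg)\right\|_{L^{2}}$ (the term ${\rm div}(\varrho u)$ being the most delicate), so one must first carry out the second and third steps for $s\in[0,\tfrac12]$, obtaining both a uniform $\dot{H}^{-s'}$ bound for $s'\le\tfrac12$ and the preliminary decay $\left\|\nabla(\varrho,u,\vartheta)(t)\right\|_{L^{2}}\lesssim(1+t)^{-3/4}$ (with analogous rates for a few higher derivatives), and only then bootstrap in the index $s$ to reach the full range. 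A secondary technical point, handled throughout by the cross terms of the first step, is that the density dissipation is one derivative weaker than that of $u$ and $\vartheta$.
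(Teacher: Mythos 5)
Your proposal is correct and follows essentially the same route as the paper: a negative-Sobolev energy identity whose nonlinear terms are bounded via $\left\|\Lambda^{-s}h\right\|_{L^2}\leq C\left\|h\right\|_{L^{6/(3+2s)}}$ and the time-integrability of the dissipation from \eqref{equ1-5}, followed by the interpolation inequality of Lemma \ref{lem2-5} to close the ODE $\frac{d}{dt}\Psi_l^N+C_0(\Psi_l^N)^{1+\frac{1}{l+s}}\leq 0$, with the case $s\in(\tfrac12,\tfrac32)$ bootstrapped from the preliminary $\dot{H}^{-1/2}$ decay and the top order $l=N$ recovered by a time-weighted estimate. You also correctly anticipated the two technical points the paper must handle (the weaker density dissipation, and the failure of the direct nonlinear estimate for $s>\tfrac12$).
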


\begin{theorem}\label{the1-3}
Let all the assumptions of Theorem \ref{the1-1} be satisfied and assume that
\begin{equation*}
(\rho_{0}-\rho_{\ast},u_{0},\theta_{0}-\theta_{\ast})\in H^{N}(\mathbb{R}^3)\cap L^{1}(\mathbb{R}^3)
\end{equation*}
for an integer $N\geq3$, and there exists a positive constant $A_{0}$ such that
\begin{equation}\label{equ1-8}
\left\|(\rho_{0}-\rho_{\ast},u_{0},\theta_{0}-\theta_{\ast})\right\|_{L^1}\leq A_{0}.
\end{equation}
Moreover, the Fourier transform $(\hat{n}_{0}(\xi),\hat{\omega}_{0}(\xi),\hat{\vartheta}_{0}(\xi))$ satisfies
\begin{equation}\label{equ1-9}
\inf_{|\xi|\ll 1}|\hat{n}_{0}(\xi)|\geq c_{0},
\inf_{|\xi|\ll 1}|\hat{\vartheta}_{0}(\xi)|\geq c_{0},
\sup_{|\xi|\ll 1}|\hat{\omega}_{0}(\xi)|\ll 1,
\sup_{|\xi|\ll 1}|(\hat{n}_{0}-\frac{\hat{\vartheta}_{0}}{\sqrt{\gamma-1}})(\xi)|\ll 1,
\end{equation}
where
\begin{equation*}
(n_{0},\omega_{0},\vartheta_{0}):=(\frac{\rho_{0}-\rho_{\ast}}{\rho_{\ast}},\frac{u_{0}}{\sqrt{R\theta_{\ast}}},
\frac{\theta_{0}-\theta_{\ast}}{\sqrt{\gamma-1}\theta_{\ast}}),
\end{equation*}
and $c_{0}$ is a positive constant. Then the solution $(\rho,u,\theta)$ has the following time-decay rate
\begin{equation}\label{equ1-10}
c_{\ast}(t+1)^{-\frac{3}{2}-k}\leq\left\|\nabla^{k}(\rho-\rho_{\ast},u,\theta-\theta_{\ast})\right\|^2_{L^{2}}\leq C(t+1)^{-\frac{3}{2}-k}
\end{equation}
for $0\leq k\leq N$.
\end{theorem}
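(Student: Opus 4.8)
The plan is to reduce \eqref{equ1-1} to a perturbation system around the equilibrium, analyze the linearized semigroup by Fourier methods, and then control the nonlinear solution through Duhamel's formula, the energy bound of Theorem \ref{the1-1}, and the (suboptimal) decay of Theorem \ref{the1-2}. Write $U=(n,\omega,\vartheta)$ for the normalized perturbations as in \eqref{equ1-9}. In these variables the symmetrized linearization of \eqref{equ1-1} about $(\rho_\ast,0,\theta_\ast)$ defines a constant--coefficient operator $\mathcal{L}$, and, exploiting the conservative form of \eqref{equ1-1}, the remainder $\mathcal{N}(U)$ can be arranged so that each of its terms is of the form $\nabla\cdot G$ or $\partial_t H$ with $G,H$ at least quadratic in $U$ and its first derivatives. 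Duhamel's formula reads
\[
U(t)=e^{t\mathcal{L}}U_0+\int_0^t e^{(t-\tau)\mathcal{L}}\mathcal{N}(U(\tau))\,d\tau ,
\]
where the $\partial_t H$ contributions are integrated by parts in $\tau$, turning them into $\mathcal{L}e^{(t-\tau)\mathcal{L}}H$ plus boundary terms; the upshot is that every nonlinear input to the semigroup effectively carries one extra spatial derivative.

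First I would diagonalize the symbol $\widehat{\mathcal{L}}(\xi)$. For $|\xi|\ll 1$ it splits into two decoupled shear modes (a heat equation for the divergence--free part of $\omega$, eigenvalue $\sim -\bar{\mu}|\xi|^2$), one thermal mode with eigenvalue $\lambda_0(\xi)=-\Theta|\xi|^2+O(|\xi|^3)$, and the acoustic pair $\lambda_\pm(\xi)=\pm i\sqrt{\gamma R\theta_\ast}\,|\xi|-\tfrac12\Xi|\xi|^2+O(|\xi|^3)$, with $\Theta,\Xi>0$; for $|\xi|$ bounded away from zero all eigenvalues have real part $\le -c<0$. This gives the linear estimate $\|\nabla^k e^{t\mathcal{L}}U_0\|_{L^2}\lesssim (1+t)^{-3/4-k/2}\|U_0\|_{L^1}+e^{-ct}\|\nabla^k U_0\|_{L^2}$. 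The role of \eqref{equ1-9} is to furnish a matching lower bound: expanding the acoustic eigenprojections shows that their action on $\widehat{U}_0(\xi)$ is, up to $O(|\xi|)$, a nonzero multiple of $\widehat{n}_0(\xi)+\sqrt{\gamma-1}\,\widehat{\vartheta}_0(\xi)$ plus a contribution from $\widehat{\omega}_0$ that is small by \eqref{equ1-9}, and $|\widehat{n}_0|\ge c_0$ together with the smallness of $|\widehat{n}_0-\widehat{\vartheta}_0/\sqrt{\gamma-1}|$ forces $|\widehat{n}_0+\sqrt{\gamma-1}\,\widehat{\vartheta}_0|\gtrsim c_0$ on $|\xi|\ll 1$, ruling out destructive interference among the modes. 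Restricting the Plancherel integral to a fixed ball $|\xi|\le\delta$ and rescaling $\xi\mapsto(1+t)^{-1/2}\xi$, the non--oscillatory part is $\gtrsim c_0^2\int_{|\xi|\le\delta}|\xi|^{2k}e^{-\Xi|\xi|^2 t}\,d\xi\gtrsim c_0^2(1+t)^{-k-3/2}$, while the cross terms carrying $e^{\pm 2i\sqrt{\gamma R\theta_\ast}|\xi|t}$ are $o((1+t)^{-k-3/2})$ by the Riemann--Lebesgue lemma; hence $\|\nabla^k e^{t\mathcal{L}}U_0\|_{L^2}\gtrsim c_0(1+t)^{-3/4-k/2}$ for $t$ large, and $\gtrsim c_0$ uniformly on compact time intervals.

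With these tools, the upper bound in \eqref{equ1-10} is obtained by upgrading Theorem \ref{the1-2} to the endpoint $s=\tfrac32$. Since $L^1(\mathbb{R}^3)\cap L^2(\mathbb{R}^3)\hookrightarrow\dot{H}^{-s}(\mathbb{R}^3)$ for every $s<\tfrac32$, Theorem \ref{the1-2} already yields $\|\nabla^l U\|_{H^{N-l}}^2\lesssim (1+t)^{-(l+s)}$, so $\|\mathcal{N}(U(\tau))\|_{L^1}\lesssim \|U\|_{L^2}\|\nabla U\|_{L^2}\lesssim(1+\tau)^{-(1+2s)/2}$, which for $s$ close to $\tfrac32$ is almost $(1+\tau)^{-2}$ and in particular integrable. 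Inserting this, the divergence/time--derivative structure of $\mathcal{N}$, and the linear bounds into the Duhamel formula and splitting the time integral at $t/2$ gives $\|U(t)\|_{L^2}^2\le C(1+t)^{-3/2}$ directly, and then $\|\nabla^k U(t)\|_{L^2}^2\le C(1+t)^{-3/2-k}$ for $1\le k\le N$ by iterating this argument together with the time--weighted energy estimates of Theorem \ref{the1-2} now based on the endpoint, the high frequencies being absorbed by the dissipation in \eqref{equ1-5} and the low frequencies handled by the semigroup. The same computation shows, crucially, that the nonlinear part of Duhamel's formula receives one extra derivative and therefore decays strictly faster:
\[
\Bigl\|\nabla^k\!\int_0^t e^{(t-\tau)\mathcal{L}}\mathcal{N}(U(\tau))\,d\tau\Bigr\|_{L^2}\le C(1+t)^{-5/4-k/2}.
\]

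The lower bound then follows by subtraction. For $t\ge T_0$ with $T_0$ large, the triangle inequality, the linear lower bound, and the previous display give
\[
\|\nabla^k U(t)\|_{L^2}\ \ge\ c\,(1+t)^{-3/4-k/2}-C(1+t)^{-5/4-k/2}\ \ge\ \tfrac{c}{2}\,(1+t)^{-3/4-k/2},
\]
while on $[0,T_0]$ one argues on the Fourier side: for $|\xi|\le\delta$ the nonlinear correction to $\widehat{U}(\xi,t)$ is controlled by the smallness of the data and the integrability of $\|\mathcal{N}\|_{L^1}$, so $|\widehat{U}(\xi,t)|\gtrsim c_0$ there, and, with the continuity of $t\mapsto\|\nabla^k U(t)\|_{L^2}$, this yields a uniform positive lower bound on $[0,T_0]$; squaring gives the left inequality in \eqref{equ1-10}. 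I expect the main difficulty to be precisely this lower bound: one must (i) push the eigenvalue and eigenprojection expansions far enough to see that \eqref{equ1-9} genuinely excludes cancellation among the acoustic, thermal, and shear modes and that the oscillatory cross terms are truly lower order, and (ii) establish that the nonlinear Duhamel term decays strictly faster than $e^{t\mathcal{L}}U_0$ for \emph{all} $t$, not only for $t$ large --- which is exactly why the divergence/time--derivative structure of $\mathcal{N}$, the integration by parts in $\tau$, and the prior knowledge of the suboptimal decay from Theorem \ref{the1-2} are indispensable.
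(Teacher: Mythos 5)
Your overall architecture (linearized semigroup bounds, Duhamel's formula, energy dissipation for the high frequencies, and a subtraction argument for the lower bound) matches the paper's, and your treatment of the upper bound --- pushing Theorem \ref{the1-2} to $s$ near $\tfrac32$ so that $\|F\|_{L^1}$ becomes integrable, then feeding this into $\tfrac{d}{dt}\Psi_0^N+C\Psi_0^N\le C\|U^l\|_{L^2}^2$ --- is a legitimate variant of the paper's bootstrap on the weighted norm $M(t)$. The condition \eqref{equ1-9} is likewise used in the same way (to prevent cancellation in the low-frequency acoustic/thermal modes), though the paper outsources both linear bounds to \cite{CLT}.

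The genuine gap is in your lower bound, specifically the claim that the nonlinear Duhamel term decays strictly faster, like $(1+t)^{-5/4-k/2}$, because ``every nonlinear input to the semigroup effectively carries one extra spatial derivative.'' For the system in the normalized variables $(n,\omega,\vartheta)$ the source terms $S_2,S_3$ are \emph{not} in divergence form: $\omega\cdot\nabla\omega=\operatorname{div}(\omega\otimes\omega)-\omega\operatorname{div}\omega$ leaves the non-divergence remainder $\omega\operatorname{div}\omega$, and $h(n)\nabla n$, $g(n)\vartheta\nabla n$, $g(n)|D\omega|^2$, $h(n)\Delta\vartheta$ are similarly only quadratic, not divergences of quadratic quantities. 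For such a term the best available input is $\|\cdot\|_{L^1}\lesssim\|U\|_{H^1}\|\nabla U\|_{H^1}\lesssim(1+\tau)^{-2}$, and convolving with $(t-\tau+1)^{-3/4}$ over $[0,t/2]$ returns exactly $(1+t)^{-3/4}$ --- the \emph{same} rate as the linear part, not a faster one. Repairing this via conservative variables and integration by parts in $\tau$ (your $\partial_t H$ terms) is a substantial project that you have only gestured at, and your subtraction $c(1+t)^{-3/4-k/2}-C(1+t)^{-5/4-k/2}$ collapses without it. The paper avoids the issue entirely: it only shows $\int_0^t\|G(t-\tau)\ast F\|_{L^2}\,d\tau\le C\eta_0(1+t)^{-3/4}$ --- same rate, but with the small constant $\eta_0$ from the smallness of the data --- so the subtraction still closes. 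Moreover, for $k\ge1$ the paper does not redo the spectral lower bound at each derivative order as you propose; it combines the $k=0$ lower bound with the interpolation inequality of Lemma \ref{lem2-5}, $\|U\|_{L^2}\le C\|\Lambda^{-1}U\|_{L^2}^{k/(k+1)}\|\nabla^kU\|_{L^2}^{1/(k+1)}$, and the upper bound $\|\Lambda^{-1}U\|_{L^2}\le C(1+t)^{-1/4}$, which yields $\|\nabla^kU\|_{L^2}\ge c_\ast(1+t)^{-3/4-k/2}$ with no further Fourier analysis. You should adopt both of these devices; as written, your lower-bound step does not go through.
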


\begin{remark} {\rm
Here are some comments concerning Theorems \ref{the1-1}--\ref{the1-3}.
\begin{itemize}
\item [(i)] By Lemma \ref{lem2-4},  we obtain that for $p\in(1,2]$, $L^{p}({\mathbb{R}^3})\subset\dot{H}^{-s}({\mathbb{R}^3})$ with $s=3(\frac{1}{p}-\frac{1}{2})\in[0,\frac{3}{2})$. Then by Theorem \ref{the1-2}, we have the following optimal decay results:
\begin{equation*}
\left\|\nabla^{l}(\rho,u,\theta)\right\|^2_{H^{N-l}}\leq C(t+1)^{-l-3(\frac{1}{p}-\frac{1}{2})}
\end{equation*}
for $l=0,1,\cdot\cdot\cdot,N$. Furthermore, the general optimal $L^{q}$ $(q>2)$ decay rates of the solution can be obtained by Gagliardo-Nirenberg inequality.
That is,
\begin{equation*}
\left\|\nabla^{l}(\rho,u,\theta)\right\|^2_{L^q}\leq\ C(t+1)^{-l-3(\frac{1}{p}-\frac{1}{q})}.
\end{equation*}

\item [(ii)] For the global well-posedness and the time-decay rates of the classical solution, we need the assumption for the smallness of the $H^{3}$-norm of
the initial data, while the higher-order Sobolev norms can be arbitrarily large. In addition, the decay results in Theorem \ref{the1-3} also need to assume that the $L^{1}$ norm of the initial date is bounded.

\item [(iii)] It is worth pointing out that the negative Sobolev estimate can only obtain the $N-th$ order spatial derivative of the system \eqref{equ1-1} with a decay rate
$(t+1)^{-(N-1+s)}$. However, we can generalize the results of the $N-th$ order spatial derivative to $(t+1)^{-(N+s)}$ by employing Sobolev inequality.

\item [(iv)] If the initial data $(\rho_{0}-\rho_{\ast},u_{0},\theta_{0}-\theta_{\ast})$ belongs to $H^{N}\cap L^{1}$, we can combine energy estimation with the decay rate of the linearized system to obtain the upper decay estimate at $k=0,1$, and then the decay rate of the $N-th$ order spatial derivative can be derived through mathematical induction. Finally, we can establish the lower decay estimate, which coincides with the upper one using Lemma \ref{lem2-5}.
\end{itemize}}
\end{remark}

The notations used in this article are as follows.
\small
\begin{itemize}
\item
$L^{p}({\mathbb{R}^3})$ denotes the usual Lebesgue spaces on $\mathbb{R}^3$, with norms
$$
\|u\|_{p}:=\left(\int_{\mathbb{R}^3}|u|^pdx\right)^{\frac{1}{p}}.
$$
\item
$H^{k}(\mathbb{R}^3)$ denotes the usual Sobolev spaces on $\mathbb{R}^3$, with norms
$$
\|u\|^{2}_{H^{k}}:=\int_{\mathbb{R}^3}\left(|u|^{2}+|\nabla u|^2+\cdots+|\nabla^{k}u|^{2}\right)dx.
$$
\item
$\dot{H}^{s}({\mathbb{R}^3})$ denotes the usual homogeneous Sobolev space on $\mathbb{R}^3$, with norms
$$
\left\| f\right\|_{\dot{H}^s}=\left\| \Lambda^{s}f\right\|_{L^2},
$$
where
$$
(\Lambda^{s}f)(x)=\frac{1}{(2\pi)^{3}}\int_{\mathbb{R}^3}|\xi|^{s}\hat{f}(\xi)e^{i\xi\cdot x}d\xi,
$$
and $\hat{f}(\xi)$ is the Fourier transform of $f$.
\item
$0\leq\varphi(\xi)\leq 1$ denotes a cut-off function such that
$$
\varphi(\xi)=
\left\{
  \begin{array}{ll}
    1,\quad \quad \text{for }  |\xi|\leq r_{0},\\
    0,\quad \quad \text{for }  |\xi|\geq R_{0},
     \end{array}
\right.
$$
where $\varphi(\xi)\in C^{\infty}(\mathbb{R}^3)$, constants $r_{0}$ and $R_{0}$ satisfying $0<r_{0}<R_{0}$. We define operators $P_{1}$ and $P_{\infty}$ on
$L^{2}$ by
$$
P_{1}f=\mathcal{F}^{-1}(\varphi(\xi)\hat{f}(\xi)),\quad \quad P_{\infty}f=\mathcal{F}^{-1}((1-\varphi(\xi))\hat{f}(\xi)),
$$
where $\mathcal{F}^{-1}(f)$ denotes the inverse Fourier transform of $f$.
\end{itemize}

The structure of this article is as follows. Section \ref{sec2} introduces some helpful lemmas. Section \ref{sec3} mainly focuses on deriving some energy estimates and using them to prove Theorem \ref{the1-1}. Section \ref{sec4} is dedicated to investigating the evolution of the negative Sobolev norm of the solution and proving Theorem \ref{the1-2}. Section \ref{sec5} is devoted to utilizing decay estimates for linearized problems to establish the upper and lower bounds of the decay rate (Theorem \ref{the1-3}).

\section{Preliminaries}\label{sec2}

In this section, we introduce some helpful results which will be used extensively in our paper.

\begin{lemma}
[Gagliardo-Nirenberg inequality, \cite{Ni}\label{lem2-1}] Let $l,s$ and $k$ be any real numbers satisfying $0\leq l,s<k$, and let $p,r,q\in[1,\infty]$ and $0\leq\theta\leq1$ such that
\begin{equation*}
\frac{l}{3}-\frac{1}{p}=\left(\frac{s}{3}-\frac{1}{r}\right)(1-\theta)+\left(\frac{k}{3}-\frac{1}{q}\right)\theta.
\end{equation*}
Then, for any $u\in W_{q}^{k}(\mathbb{R}^3)$, we have
\begin{equation*}
\left\|\nabla^{l}u\right\|_{L^p}\leq C\left\|\nabla^{s}u\right\|^{1-\theta}_{L^r}\left\|\nabla^{k}u\right\|^{\theta}_{L^q}.
\end{equation*}
Here, we require $0<\theta<1$ when $p=\infty$.
\end{lemma}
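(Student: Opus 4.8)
The plan is to deduce the sharp multiplicative estimate from a non-sharp \emph{additive} Sobolev-type embedding together with a scaling optimization, since the exponent relation in the hypothesis is precisely the dimensional-balance condition that renders the claimed inequality invariant under the rescaling $u\mapsto u_\lambda:=u(\lambda\,\cdot)$. Throughout I would interpret $\nabla^a$ as the Fourier multiplier homogeneous of degree $a$ (so on a frequency block $|\xi|\sim 2^j$ it acts, up to constants, as multiplication by $2^{ja}$), which is the natural reading when $l,s,k$ are arbitrary reals. The scaling heuristic also explains why the relation is exactly the right one: under $u\mapsto u_\lambda$ both sides of the asserted inequality carry the same power of $\lambda$ precisely when the stated identity holds.

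First I would establish the additive bound
\begin{equation*}
\|\nabla^l u\|_{L^p}\le C\bigl(\|\nabla^s u\|_{L^r}+\|\nabla^k u\|_{L^q}\bigr).
\end{equation*}
The tool is the Littlewood--Paley decomposition $u=\sum_{j\in\mathbb{Z}}\Delta_j u$ together with Bernstein's inequalities: for a block localized to $|\xi|\sim 2^j$ one has $\|\nabla^a\Delta_j u\|_{L^p}\approx 2^{ja}\|\Delta_j u\|_{L^p}$ and $\|\Delta_j u\|_{L^p}\lesssim 2^{3j(1/a-1/p)}\|\Delta_j u\|_{L^a}$ for $a\le p$. Setting $\delta:=(k-\tfrac{3}{q})-(s-\tfrac{3}{r})$, the hypothesis forces the two per-block routes to carry frequency weights $2^{j\theta\delta}$ (via the $(s,r)$ data) and $2^{-j(1-\theta)\delta}$ (via the $(k,q)$ data), whose exponents have opposite signs. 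Estimating each block by whichever route is favorable — the $(s,r)$ control on one frequency range and the $(k,q)$ control on the complementary range, as dictated by the sign of $\delta$ — makes the two resulting geometric series in $j$ converge, and reassembling the blocks yields the additive bound.

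With the additive bound in hand I would apply it to $u_\lambda$ and use the exact scalings $\|\nabla^a u_\lambda\|_{L^m}=\lambda^{a-3/m}\|\nabla^a u\|_{L^m}$. Dividing by $\lambda^{l-3/p}$ and invoking the hypothesis converts the additive bound into
\begin{equation*}
\|\nabla^l u\|_{L^p}\le C\bigl(\lambda^{-\theta\delta}\|\nabla^s u\|_{L^r}+\lambda^{(1-\theta)\delta}\|\nabla^k u\|_{L^q}\bigr),
\end{equation*}
and minimizing the right-hand side over $\lambda>0$ (the balancing choice $\lambda^{\delta}\sim\|\nabla^s u\|_{L^r}/\|\nabla^k u\|_{L^q}$) produces exactly $C\|\nabla^s u\|_{L^r}^{1-\theta}\|\nabla^k u\|_{L^q}^{\theta}$. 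This optimization has an interior minimizer only for $0<\theta<1$; the endpoints $\theta\in\{0,1\}$ collapse the multiplicative estimate to a single Sobolev embedding, which holds for $p<\infty$ but degenerates at the borderline into $L^\infty$. This is exactly the source of the stated requirement ``$0<\theta<1$ when $p=\infty$'': at $p=\infty$ the favorable block exponent becomes $0$ at $\theta\in\{0,1\}$, producing a logarithmic divergence in the reassembly.

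I expect the main obstacle to lie entirely in the additive step, specifically in reassembling the dyadic blocks into a genuine $L^p$ norm: the triangle inequality $\|\nabla^l u\|_{L^p}\le\sum_j\|\nabla^l\Delta_j u\|_{L^p}$ is too lossy for general $p$, so for $1<p<\infty$ I would invoke the Littlewood--Paley square-function theorem $\|u\|_{L^p}\approx\bigl\|(\sum_j|\Delta_j u|^2)^{1/2}\bigr\|_{L^p}$ and treat the endpoints $p\in\{1,\infty\}$ separately. A secondary technical point is that the Bernstein integrability step requires $r\le p$ or $q\le p$ on the relevant frequency ranges; when these orderings fail the frequency splitting must be arranged differently and each regime argued on its own, so several configurations of $(p,q,r)$ have to be checked. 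These steps are standard but bookkeeping-heavy, and constitute the bulk of the work.
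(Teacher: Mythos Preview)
The paper does not prove this lemma at all: it is stated in the preliminaries with a citation to Nirenberg's original article and then used as a black box. So there is no ``paper's own proof'' to compare against; your proposal supplies an argument where the paper gives none.

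Your strategy---additive embedding via Littlewood--Paley blocks and Bernstein, then scaling optimization to upgrade to the multiplicative form---is a standard and correct modern route, quite different from Nirenberg's original proof, which proceeds through pointwise representation formulas and iterated one-dimensional interpolation inequalities. The Littlewood--Paley approach you outline is cleaner conceptually and handles fractional orders uniformly (which matches the lemma's phrasing ``any real numbers''), whereas Nirenberg's argument is more elementary but tailored to integer derivatives and requires more case analysis. You have correctly flagged the genuine technical obstacles: the block reassembly into $L^p$ needs the square-function characterization for $1<p<\infty$ and separate treatment at the endpoints, and the Bernstein step imposes ordering constraints on $p,q,r$ that force several sub-cases. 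One small point you pass over: the degenerate situation $\delta=0$ (i.e.\ $k-3/q=s-3/r$) kills the $\lambda$-dependence, so the optimization step is vacuous there and the multiplicative inequality must instead be obtained directly, e.g.\ by noting that both endpoint embeddings coincide. None of this is a gap in the sense of a missing idea---the outline is sound---but the execution is, as you say, bookkeeping-heavy.
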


\begin{lemma}
[\label{lem2-2}Kato-Ponce inequality, \cite{KP}]
Let $s>0$ and $r\in(1,\infty)$. Then
\begin{equation*}
\left\|\nabla^{s}(fg)\right\|_{L^r}\leq C(\left\|g\right\|_{p_{1}}\left\|\nabla^{s}f\right\|_{q_{1}}+\left\|\nabla^{s}g\right\|_{p_{2}}\left\|f\right\|_{q_{2}}),
\end{equation*}
where $q_{1},q_{2}\in (1,\infty)$ and $\frac{1}{r}=\frac{1}{p_{1}}+\frac{1}{q_{1}}=\frac{1}{p_{2}}+\frac{1}{q_{2}}$.
\end{lemma}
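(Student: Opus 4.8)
The plan is to establish this fractional Leibniz rule through a homogeneous Littlewood--Paley decomposition combined with Bony's paraproduct calculus. First I would introduce homogeneous dyadic frequency blocks $\Delta_j$ ($j\in\mathbb{Z}$) and the associated low-frequency truncations $S_j=\sum_{k<j}\Delta_k$, and recall two standard consequences of the Mikhlin--H\"ormander multiplier theorem and the Fefferman--Stein vector-valued maximal inequality, both valid for $1<r<\infty$: the square-function characterization $\|\Lambda^s h\|_{L^r}\sim\|(\sum_j 2^{2js}|\Delta_j h|^2)^{1/2}\|_{L^r}$, and the uniform $L^r$-boundedness of the blocks together with their vector-valued analogues. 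These reduce the estimate to controlling the dyadic pieces of $fg$ frequency by frequency and reassembling them in an $\ell^2(L^r)$-type norm.

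Next I would split the product by Bony's decomposition
$$fg=T_f g+T_g f+R(f,g),\qquad T_f g=\sum_k S_{k-2}f\,\Delta_k g,\quad R(f,g)=\sum_{|j-k|\le 1}\Delta_j f\,\Delta_k g,$$
the point being that each piece forces $\nabla^s=\Lambda^s$ onto a single factor. In the paraproduct $T_g f=\sum_k S_{k-2}g\,\Delta_k f$ the summand has Fourier support in an annulus $\{|\xi|\sim 2^k\}$, so $\Delta_q(S_{k-2}g\,\Delta_k f)$ survives only for $q\sim k$; hence $\Lambda^s$ merely contributes the factor $2^{ks}$ attached to $\Delta_k f$, and after the square-function estimate followed by H\"older's inequality with $\frac1r=\frac1{p_1}+\frac1{q_1}$ I obtain $C\|g\|_{p_1}\|\nabla^s f\|_{q_1}$. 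By symmetry the low-high paraproduct $T_f g$ contributes $C\|f\|_{q_2}\|\nabla^s g\|_{p_2}$.

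The remainder $R(f,g)$ is the genuinely delicate term and the place where I expect the main obstacle. Since $\Delta_j f\,\Delta_k g$ with $|j-k|\le1$ has Fourier support only in a ball of radius $\sim 2^j$ rather than an annulus, $\Delta_q$ of it does not vanish at low frequencies $q\le j$, and applying $\Lambda^s$ then summing produces $\sum_{q\le j}2^{qs}$, which converges to $\lesssim 2^{js}$ exactly because $s>0$---this is the only step that genuinely uses the hypothesis $s>0$. I would absorb the surviving factor $2^{js}$ onto one factor, say $\Delta_j f$, and invoke the vector-valued maximal inequality together with H\"older's inequality to distribute the $L^r$ norm, arriving at a bound of the same shape $C\|g\|_{p_1}\|\nabla^s f\|_{q_1}$. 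Throughout, all intermediate exponents must be kept strictly inside $(1,\infty)$ so that the multiplier and maximal estimates remain available, which is precisely why the hypotheses demand $q_1,q_2\in(1,\infty)$. Summing the three contributions and using the square-function characterization in reverse for $\Lambda^s(fg)$ yields the asserted inequality.
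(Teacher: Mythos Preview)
Your proof outline is a correct and standard modern route to the fractional Leibniz rule: Littlewood--Paley decomposition, Bony's paraproduct splitting $fg=T_fg+T_gf+R(f,g)$, the square-function characterization of $\dot H^{s,r}$, and the Fefferman--Stein vector-valued maximal inequality to handle the diagonal remainder. The observation that the convergence of $\sum_{q\le j}2^{qs}$ is exactly where $s>0$ enters is the right diagnosis of that hypothesis.

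However, the paper itself does not prove this lemma at all. It is stated in the Preliminaries section and attributed to Kato and Ponce \cite{KP}, with no argument given; the authors simply quote it as a known tool. So there is nothing to compare at the level of proof strategy: you have supplied a genuine proof sketch where the paper offers only a citation. If anything, it is worth noting that the original 1988 Kato--Ponce argument proceeds via commutator estimates rather than the paraproduct machinery you describe; the Littlewood--Paley approach you outline is the cleaner later reformulation (as in Grafakos--Oh and related work) and is arguably more transparent about the role of each hypothesis.
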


\begin{lemma} [\label{lem2-3}\cite{PG}]
Let $f(\varphi)$ be smooth function of $\varphi$, with bounded derivatives of any order, and $\left\|\varphi\right\|_{L^{\infty}}\leq 1$. Then for any integer
$m\geq1$, we have
\begin{equation*}
\left\|\nabla^{m}f(\varphi)\right\|_{L^p}\leq C\left\|\nabla^{m}\varphi\right\|_{L^p}
\end{equation*}
for any $1\leq p\leq\infty$, where $C$ may depend on $f$ and $m$.
\end{lemma}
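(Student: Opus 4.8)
The plan is to expand $\nabla^{m}f(\varphi)$ by the higher-order chain rule (the Fa\`a di Bruno formula, i.e. the generalized Leibniz rule for composite functions) and then to control each resulting term by Gagliardo--Nirenberg interpolation. Differentiating $f(\varphi)$ exactly $m$ times, $\nabla^{m}f(\varphi)$ is a finite linear combination of terms of the form
\begin{equation*}
f^{(k)}(\varphi)\,\nabla^{m_{1}}\varphi\,\nabla^{m_{2}}\varphi\cdots\nabla^{m_{k}}\varphi,
\end{equation*}
where $1\leq k\leq m$, each $m_{i}\geq 1$, and $m_{1}+\cdots+m_{k}=m$. Since $\|\varphi\|_{L^{\infty}}\leq 1$ and $f$ has bounded derivatives of every order, $\varphi$ takes values in a fixed compact interval on which $|f^{(k)}|\leq C$; hence each term is pointwise bounded by $C\,|\nabla^{m_{1}}\varphi|\cdots|\nabla^{m_{k}}\varphi|$, and it suffices to estimate the $L^{p}$ norm of such a product.

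First I would dispose of the trivial case $k=1$: then $m_{1}=m$ and the single term is $f'(\varphi)\nabla^{m}\varphi$, bounded in $L^{p}$ by $C\|\nabla^{m}\varphi\|_{L^{p}}$ with no interpolation needed. For $k\geq 2$ every $m_{i}<m$, and I apply H\"older's inequality to split the product across exponents $p_{i}$ obeying $\frac{1}{p}=\sum_{i=1}^{k}\frac{1}{p_{i}}$, so that
\begin{equation*}
\|\nabla^{m_{1}}\varphi\cdots\nabla^{m_{k}}\varphi\|_{L^{p}}\leq C\prod_{i=1}^{k}\|\nabla^{m_{i}}\varphi\|_{L^{p_{i}}}.
\end{equation*}
The key choice is to take $\theta_{i}=m_{i}/m$ in Lemma \ref{lem2-1} applied with $l=m_{i}$, $s=0$, $r=\infty$, $k=m$, $q=p$; the interpolation constraint then forces $\frac{1}{p_{i}}=\frac{m_{i}}{mp}$, and these exponents satisfy $\sum_{i}\frac{1}{p_{i}}=\frac{1}{p}$, so the H\"older splitting is consistent. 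This yields, for each factor,
\begin{equation*}
\|\nabla^{m_{i}}\varphi\|_{L^{p_{i}}}\leq C\|\varphi\|_{L^{\infty}}^{\,1-m_{i}/m}\,\|\nabla^{m}\varphi\|_{L^{p}}^{\,m_{i}/m}.
\end{equation*}

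Multiplying these bounds and using $\sum_{i}m_{i}/m=1$ together with $\sum_{i}(1-m_{i}/m)=k-1$ gives
\begin{equation*}
\|\nabla^{m_{1}}\varphi\cdots\nabla^{m_{k}}\varphi\|_{L^{p}}\leq C\|\varphi\|_{L^{\infty}}^{\,k-1}\,\|\nabla^{m}\varphi\|_{L^{p}}\leq C\|\nabla^{m}\varphi\|_{L^{p}},
\end{equation*}
where the last step absorbs the factor $\|\varphi\|_{L^{\infty}}^{\,k-1}\leq 1$. Summing over the finitely many terms in the Fa\`a di Bruno expansion then completes the estimate. I expect the main technical point to be the verification of the endpoint exponents: when $p=\infty$, Lemma \ref{lem2-1} requires $0<\theta_{i}<1$, but for a genuine product ($k\geq 2$) each $m_{i}<m$ forces $\theta_{i}=m_{i}/m\in(0,1)$, so the hypothesis holds, while the excluded value $\theta_{i}=1$ occurs only in the case $k=1$ treated directly. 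Confirming that the H\"older exponents $p_{i}=mp/m_{i}$ lie in $[1,\infty]$ and that $C$ depends only on $f$ and $m$ (through the finitely many bounds on $f^{(k)}$ and the combinatorial coefficients) is then routine.
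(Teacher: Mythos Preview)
Your argument is correct and is in fact the standard Moser-type proof of this composition estimate: Fa\`a di Bruno expansion followed by a H\"older splitting with exponents $p_i=mp/m_i$ and Gagliardo--Nirenberg interpolation between $\|\varphi\|_{L^\infty}$ and $\|\nabla^m\varphi\|_{L^p}$. The endpoint check (that $\theta_i=m_i/m\in(0,1)$ whenever a genuine product appears, so that Lemma~\ref{lem2-1} applies even at $p_i=\infty$) is handled correctly, and the case $k=1$ is indeed trivial.

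There is nothing to compare against in the paper itself: Lemma~\ref{lem2-3} is quoted from \cite{PG} without proof, so the authors do not supply their own argument. Your write-up therefore fills a gap that the paper leaves to the literature; the approach you chose is exactly the one typically used to establish this lemma.
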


\begin{lemma} [\label{lem2-4}\cite{Zh}]
Let $0\leq s<\frac{3}{2}$, $1<p \leq 2$ and $\frac{1}{2}+\frac{s}{3}=\frac{1}{p}$, then
\begin{equation*}
\left\|f\right\|_{\dot{H}^{-s}}\leq C\left\|f\right\|_{L^p}.
\end{equation*}
\end{lemma}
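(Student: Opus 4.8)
The statement is a Hardy--Littlewood--Sobolev type embedding, so the plan is to reduce it, via the Fourier transform and $L^2$ duality, to the boundedness of the Riesz potential, which is exactly the classical Hardy--Littlewood--Sobolev inequality.

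First I would use Plancherel's theorem to rewrite the left-hand side. Since $\|f\|_{\dot H^{-s}}=\|\Lambda^{-s}f\|_{L^2}$ and $\Lambda^{-s}$ is the Fourier multiplier with symbol $|\xi|^{-s}$ (a constant multiple of the Riesz potential $I_s$ on $\mathbb{R}^3$), the quantity to estimate is the $L^2$-norm of $\Lambda^{-s}f$. Working first on Schwartz functions and extending by density at the end, I would invoke the $L^2$ duality characterization
\begin{equation*}
\|\Lambda^{-s}f\|_{L^2}=\sup_{\|g\|_{L^2}=1}\langle \Lambda^{-s}f,g\rangle.
\end{equation*}
Because the symbol $|\xi|^{-s}$ is real and even, the operator $\Lambda^{-s}$ is self-adjoint, so by Parseval's identity $\langle \Lambda^{-s}f,g\rangle=\langle f,\Lambda^{-s}g\rangle$.

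Next I would apply H\"older's inequality with the conjugate pair $(p,p')$, $\frac1p+\frac1{p'}=1$, to obtain
\begin{equation*}
\langle f,\Lambda^{-s}g\rangle\leq \|f\|_{L^p}\,\|\Lambda^{-s}g\|_{L^{p'}}.
\end{equation*}
The decisive step is then to control $\|\Lambda^{-s}g\|_{L^{p'}}$ by $\|g\|_{L^2}$. This is precisely the Hardy--Littlewood--Sobolev inequality for the Riesz potential: in dimension $3$, $\|\Lambda^{-s}g\|_{L^q}\leq C\|g\|_{L^r}$ whenever $1<r<q<\infty$ and $\frac1q=\frac1r-\frac{s}{3}$. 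Choosing $r=2$ and $q=p'$, the exponent condition becomes $\frac1{p'}=\frac12-\frac{s}{3}$, which, using $\frac1{p'}=1-\frac1p$, is equivalent to the hypothesis $\frac1p=\frac12+\frac{s}{3}$. The constraints $0<s<\frac32$ and $1<p<2$ guarantee $1<r=2<q=p'<\infty$, so the inequality applies; the endpoint case $s=0$, $p=2$ is trivial since $\Lambda^0$ is the identity. Combining the three estimates and taking the supremum over $\|g\|_{L^2}=1$ yields $\|f\|_{\dot H^{-s}}\leq C\|f\|_{L^p}$.

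The main obstacle is not conceptual but rests entirely on the Hardy--Littlewood--Sobolev inequality, which supplies the sharp mapping property of the Riesz potential; once that is granted, the argument is a routine combination of Plancherel's theorem, duality, and H\"older's inequality. The only point requiring care is the bookkeeping of exponents to confirm that the HLS relation $\frac1q=\frac1r-\frac{s}{3}$ coincides with the stated condition $\frac12+\frac{s}{3}=\frac1p$ under the choices $r=2$, $q=p'$.
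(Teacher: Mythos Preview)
Your argument is correct: reducing to the Hardy--Littlewood--Sobolev inequality via $L^2$ duality and self-adjointness of $\Lambda^{-s}$ is the standard route, and your exponent bookkeeping is accurate. Note that the paper itself does not prove this lemma but merely cites it from \cite{Zh}, so there is no in-paper proof to compare against; your proof supplies exactly the classical justification one would expect.
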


\begin{lemma} [\label{lem2-5}\cite{Zh}]
Let $s,k\geq0$, and $l\geq0$, then
\begin{equation*}
\left\|\nabla^{l}f\right\|_{L^{2}}\leq C\left\|\nabla^{l+k}f\right\|^{1-\theta}_{L^2}\left\|f\right\|^{\theta}_{\dot{H}^{-s}}, \quad
\theta=\frac{k}{l+k+s}.
\end{equation*}
\end{lemma}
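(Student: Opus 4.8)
The plan is to establish this Gagliardo--Nirenberg-type bound directly on the Fourier side, where it collapses to a single application of Hölder's inequality once the frequency weight $|\xi|^{2l}$ is split exactly. The whole argument runs through Plancherel's theorem, which turns each of the three $L^2$-based quantities into a weighted $L^2$ integral of $\hat f$, and the only thing that makes it work is that the prescribed exponent $\theta=k/(l+k+s)$ is precisely the one for which the weight splitting is lossless.

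First I would rewrite the three quantities in frequency space. By Plancherel's theorem, $\|\nabla^{l}f\|_{L^2}^2$ is equivalent, with constants depending only on $l$ and the dimension, to $\int_{\mathbb{R}^3}|\xi|^{2l}\,|\hat f(\xi)|^2\,d\xi$, and the same holds for $\|\nabla^{l+k}f\|_{L^2}^2$ with weight $|\xi|^{2(l+k)}$. Furthermore, since $\|f\|_{\dot{H}^{-s}}=\|\Lambda^{-s}f\|_{L^2}$ and $\widehat{\Lambda^{-s}f}(\xi)=|\xi|^{-s}\hat f(\xi)$, one has
\[
\|f\|_{\dot{H}^{-s}}^2 = c\int_{\mathbb{R}^3}|\xi|^{-2s}\,|\hat f(\xi)|^2\,d\xi .
\]
All these multiplicative constants are harmless and get absorbed into the final $C$. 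If $k=0$ then $\theta=0$ and the inequality is trivial, while if $l+k+s=0$ all indices vanish and $\dot{H}^{0}=L^2$; thus I may assume $l+k+s>0$, in which case $\theta=k/(l+k+s)\in[0,1]$.

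The key observation is the algebraic identity that governs the exponent: $2l = 2(l+k)(1-\theta)-2s\theta$ is equivalent to $\theta(l+k+s)=k$, i.e.\ to $\theta=k/(l+k+s)$. With this choice the weight factors pointwise in $\xi$ with no loss,
\[
|\xi|^{2l}\,|\hat f(\xi)|^2 = \Bigl(|\xi|^{2(l+k)}\,|\hat f(\xi)|^2\Bigr)^{1-\theta}\Bigl(|\xi|^{-2s}\,|\hat f(\xi)|^2\Bigr)^{\theta}.
\]
Integrating this identity over $\mathbb{R}^3$ and applying Hölder's inequality with the conjugate exponents $\tfrac{1}{1-\theta}$ and $\tfrac{1}{\theta}$ gives
\[
\int_{\mathbb{R}^3}|\xi|^{2l}\,|\hat f|^2\,d\xi \le \Bigl(\int_{\mathbb{R}^3}|\xi|^{2(l+k)}\,|\hat f|^2\,d\xi\Bigr)^{1-\theta}\Bigl(\int_{\mathbb{R}^3}|\xi|^{-2s}\,|\hat f|^2\,d\xi\Bigr)^{\theta}.
\]
Converting each factor back by Plancherel yields $\|\nabla^{l}f\|_{L^2}^2 \le C\,\|\nabla^{l+k}f\|_{L^2}^{2(1-\theta)}\,\|f\|_{\dot{H}^{-s}}^{2\theta}$, and taking square roots produces the stated inequality.

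Since the weight splitting is an exact equality and Hölder's inequality is applied only once, this lemma is essentially routine and presents no serious obstacle. The two items to be checked are pure bookkeeping: that the algebra forces exactly $\theta=k/(l+k+s)$, and that $0\le\theta\le1$ so the Hölder exponents are admissible. The only analytic point worth noting is the singular weight $|\xi|^{-2s}$ near the origin, but this causes no trouble, because the finiteness of $\|f\|_{\dot{H}^{-s}}$ is by definition the statement that $\int_{\mathbb{R}^3}|\xi|^{-2s}|\hat f|^2\,d\xi$ converges; hence the right-hand side of the Hölder estimate is finite and the bound is meaningful.
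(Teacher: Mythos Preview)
Your proof is correct; it is the standard Fourier-side interpolation argument, reducing the inequality to a single application of H\"older's inequality on the weighted integrals of $|\hat f|^2$, with the exponent $\theta=k/(l+k+s)$ forced by the weight identity $2l=2(l+k)(1-\theta)-2s\theta$. The paper does not supply its own proof of this lemma---it is quoted from \cite{Zh}---so there is nothing to compare against, but your argument is precisely the natural one and would be the expected proof in that reference as well.
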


\section{Energy estimates}\label{sec3}

In this section, we first reformulate the problem \eqref{equ1-1}--\eqref{equ1-3}, and then establish the well-posedness of the classical solution by deriving a series of meaningful energy estimates.

To begin with, let us define
\begin{equation}\label{equ3-1}
n=\frac{\rho-\rho_{\ast}}{\rho_{\ast}}, \quad
\omega=\frac{u}{\sqrt{R\theta_{\ast}}}, \quad
\vartheta=\frac{\theta-\theta_{\ast}}{\sqrt{\gamma-1}\theta_{\ast}},
\end{equation}
and introduce the following constants as
\begin{equation}\label{equ3-2}
c=\sqrt{R\theta_{\ast}}, \quad
\sigma=\sqrt{(\gamma-1)R\theta_{\ast}}, \quad
\mu=\frac{\tilde{\mu}}{\rho_{\ast}}, \quad
\lambda=\frac{\tilde{\lambda}}{\rho_{\ast}}, \quad
\kappa=\frac{(\gamma-1)\tilde{\kappa}}{R\rho_{\ast}}.
\end{equation}
Then the system \eqref{equ1-1} is simplified as
\begin{equation}\label{equ3-3}
\left\{
  \begin{array}{ll}
    \partial_{t}n+c{\rm div}\omega=S_{1}, \quad  x\in\mathbb{R}^{3}, t>0,\\
    \partial_{t}\omega-\mu\Delta \omega-(\mu+\lambda)\nabla{\rm div}\omega+c\nabla n+\sigma\nabla\vartheta=S_{2},\\
    \partial_{t}\vartheta-\kappa\Delta\vartheta+\sigma{\rm div}\omega=S_{3},
     \end{array}
\right.
\end{equation}
with initial data
\begin{equation}\label{equ3-4}
(n,\omega,\vartheta)|_{t=0}=(n_{0},\omega_0,\vartheta_0)
=\left(\frac{\rho_{0}-\rho_{\ast}}{\rho_{\ast}}, \frac{u_{0}}{c}, \frac{\theta_{0}-\theta_{\ast}}{\sqrt{\gamma-1}\theta_{\ast}}\right),
\end{equation}
where $S_{i}$ $(i=1,2,3)$ are defined by
\begin{equation*}
\left\{
  \begin{array}{ll}
    S_{1}=-cn{\rm div}\omega-c\omega\cdot\nabla n,\\
    S_{2}=-c\omega\cdot\nabla \omega-h(n)(\mu\Delta \omega+(\mu+\lambda)\nabla{\rm div}\omega)+ch(n)\nabla n -\sigma g(n)\vartheta\nabla n,\\
    S_{3}=-c\omega\cdot\nabla\vartheta-\frac{\sigma^{2}}{c}\vartheta{\rm div}\omega+\frac{\sigma}{c}g(n)(2\mu|D\omega|^{2}
    +\lambda({\rm div}\omega)^{2})-\kappa h(n)\Delta\vartheta,
     \end{array}
\right.
\end{equation*}
and
\begin{equation*}
h(n)=\frac{n}{1+n},\quad g(n)=\frac{1}{1+n}, \quad D\omega=\frac{1}{2}(\nabla \omega+(\nabla \omega)^{T}).
\end{equation*}

In addition, we assume a priori that for sufficiently small positive constant $\eta_{1}$ satisfying
\begin{equation}\label{equ3-5}
\left\|n\right\|_{H^3}+\left\|\omega\right\|_{H^3}+\left\|\vartheta\right\|_{H^3}\leq\eta_{1}.
\end{equation}
Then we derived some energy estimates that play an important role in establishing the global existence of the solution.
\begin{lemma}\label{lem3-1}
Under the assumption \eqref{equ3-5}, it holds that
\begin{equation}\label{equ3-6}
\begin{split}
\frac{d}{dt}\left\|\nabla^{k}(n,\omega,\vartheta)\right\|^2_{L^2}
+\left\|\nabla^{k+1}(\omega,\vartheta)\right\|^2_{L^2}
\leq C\eta_{1}\left\|\nabla^{k+1}(n,\omega,\vartheta)\right\|^2_{L^2}
\end{split}
\end{equation}
for $k=0,\cdots,N$ ($N\geq3$).
\end{lemma}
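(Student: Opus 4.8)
The plan is to perform the standard high-order energy method on the symmetrized system \eqref{equ3-3}. First I would apply $\nabla^k$ to each of the three equations in \eqref{equ3-3}, then take the $L^2$ inner product of the $\nabla^k$-density equation with $\nabla^k n$, the $\nabla^k$-momentum equation with $\nabla^k \omega$, and the $\nabla^k$-temperature equation with $\nabla^k \vartheta$, and sum the three identities. The key structural point is the \emph{cancellation of the coupling terms}: the terms $c\,\nabla\nabla^k n$ paired with $\nabla^k\omega$ and $c\,{\rm div}\,\nabla^k\omega$ paired with $\nabla^k n$ cancel after integration by parts, and likewise $\sigma\,\nabla\nabla^k\vartheta$ against $\nabla^k\omega$ cancels $\sigma\,{\rm div}\,\nabla^k\omega$ against $\nabla^k\vartheta$. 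The parabolic terms $-\mu\Delta\omega-(\mu+\lambda)\nabla{\rm div}\,\omega$ and $-\kappa\Delta\vartheta$ furnish, after integration by parts, the good dissipation $\mu\|\nabla^{k+1}\omega\|_{L^2}^2+(\mu+\lambda)\|\nabla^k{\rm div}\,\omega\|_{L^2}^2+\kappa\|\nabla^{k+1}\vartheta\|_{L^2}^2$, which (dropping the nonnegative ${\rm div}$ term and using $\mu,\kappa>0$) controls $\|\nabla^{k+1}(\omega,\vartheta)\|_{L^2}^2$ up to a fixed positive constant that I can absorb by rescaling time or by keeping the constant explicit. This yields
\begin{equation*}
\frac12\frac{d}{dt}\left\|\nabla^k(n,\omega,\vartheta)\right\|_{L^2}^2+c_1\left\|\nabla^{k+1}(\omega,\vartheta)\right\|_{L^2}^2\leq \sum_{i=1}^3\left|\left(\nabla^k S_i,\nabla^k(n,\omega,\vartheta)\right)\right|,
\end{equation*}
so everything reduces to bounding the nonlinear right-hand side by $C\eta_1\|\nabla^{k+1}(n,\omega,\vartheta)\|_{L^2}^2$.

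The main work, and the main obstacle, is estimating the terms coming from $\nabla^k S_i$. These fall into a few types: transport terms like $c\,\omega\cdot\nabla n$, $c\,\omega\cdot\nabla\omega$, $c\,\omega\cdot\nabla\vartheta$; quadratic terms like $cn\,{\rm div}\,\omega$, $\frac{\sigma^2}{c}\vartheta\,{\rm div}\,\omega$; the quasilinear second-order terms $h(n)(\mu\Delta\omega+(\mu+\lambda)\nabla{\rm div}\,\omega)$ and $\kappa h(n)\Delta\vartheta$; and the lower-order terms $ch(n)\nabla n$, $\sigma g(n)\vartheta\nabla n$, $\frac{\sigma}{c}g(n)(2\mu|D\omega|^2+\lambda({\rm div}\,\omega)^2)$. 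For each I would use the Leibniz/Kato–Ponce rule (Lemma \ref{lem2-2}) to distribute $\nabla^k$, the composition estimate (Lemma \ref{lem2-3}) to handle $h(n)$ and $g(n)$ together with the a priori bound $\|n\|_{L^\infty}\lesssim\|n\|_{H^3}\leq\eta_1\leq 1$, and Gagliardo–Nirenberg (Lemma \ref{lem2-1}) to trade $L^\infty$ and intermediate $L^p$ norms for $\|\cdot\|_{H^3}\leq\eta_1$ times a top-order $\dot H^{k+1}$ norm. The crucial bookkeeping observation is that, because the dissipation only controls derivatives of $\omega$ and $\vartheta$ (not of $n$), whenever a term contributes a factor $\nabla^{k+1}n$ it must be multiplied by a genuinely small factor; this is exactly what happens — e.g. in $c\,\omega\cdot\nabla n$ the $n$-derivative of top order is paired against $\omega$ which is small, and in $ch(n)\nabla n$ one has $h(n)$ small — so that after the dust settles every contribution is bounded by $C\eta_1$ times $\|\nabla^{k+1}(n,\omega,\vartheta)\|_{L^2}^2$. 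Integrating by parts once more in the quasilinear terms $h(n)\Delta\omega$ etc.\ to move one derivative off the second-order factor is the standard device that keeps the top-order count at $k+1$ rather than $k+2$.

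A representative estimate illustrating the mechanism: for the term $(\nabla^k(c\,\omega\cdot\nabla n),\nabla^k n)$, Kato–Ponce gives $\|\nabla^k(\omega\cdot\nabla n)\|_{L^2}\lesssim \|\omega\|_{L^\infty}\|\nabla^{k+1}n\|_{L^2}+\|\nabla^k\omega\|_{L^6}\|\nabla n\|_{L^3}$, and then $\|\omega\|_{L^\infty}\lesssim\|\omega\|_{H^3}\leq\eta_1$, $\|\nabla^k\omega\|_{L^6}\lesssim\|\nabla^{k+1}\omega\|_{L^2}$, $\|\nabla n\|_{L^3}\lesssim\|\nabla n\|_{H^1}\leq\eta_1$ for $0\le k\le N$; pairing with $\|\nabla^k n\|_{L^2}$ and redistributing one derivative via integration by parts (using ${\rm div}\,\omega$) controls it by $C\eta_1\|\nabla^{k+1}(n,\omega)\|_{L^2}^2$. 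All other terms are handled by the same triad of lemmas, the cases $0\le k\le 2$ occasionally needing a slightly different split of the Sobolev exponents than $3\le k\le N$, but with no new idea. Summing the three inner products and all the nonlinear bounds produces exactly \eqref{equ3-6}, completing the proof.
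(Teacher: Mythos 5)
Your overall framework is exactly the paper's: apply $\nabla^k$, test with $\nabla^k(n,\omega,\vartheta)$, use the antisymmetric cancellation of the $c$- and $\sigma$-coupling terms, extract the dissipation from the viscous and heat-conduction terms, and bound the ten nonlinear integrals by $C\eta_1\|\nabla^{k+1}(n,\omega,\vartheta)\|_{L^2}^2$. However, your one worked representative estimate does not close, and the way it fails is precisely the delicate point of this lemma. You bound $\bigl(\nabla^k(\omega\cdot\nabla n),\nabla^k n\bigr)$ by $\|\nabla^k(\omega\cdot\nabla n)\|_{L^2}\,\|\nabla^k n\|_{L^2}$ (plus the ${\rm div}\,\omega$ integration-by-parts term, which produces $\|\nabla\omega\|_{L^\infty}\|\nabla^k n\|_{L^2}^2$). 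Every one of these ends with a factor $\|\nabla^k n\|_{L^2}$ at order $k$, and on $\mathbb{R}^3$ there is no Poincar\'e inequality, so $\|\nabla^k n\|_{L^2}$ is \emph{not} controlled by $\|\nabla^{k+1}n\|_{L^2}$; for $k=0$ you would be left with $\eta_1\|\nabla n\|_{L^2}\|n\|_{L^2}$, which cannot be absorbed into the right-hand side of \eqref{equ3-6}. An estimate of the form you wrote is exactly what yields the weaker conclusion of Lemma \ref{lem3-2} (where $\|\nabla^k n\|_{L^2}^2$ is permitted on the right), not Lemma \ref{lem3-1}.

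The missing device, which the paper uses throughout this proof, is to pair the nonlinearity in the dual exponents $L^{6}$--$L^{6/5}$ rather than $L^2$--$L^2$: write $n\,{\rm div}\,\omega+\omega\cdot\nabla n={\rm div}(n\omega)$ and estimate
\begin{equation*}
|I_1+I_2|\leq C\left\|\nabla^{k}n\right\|_{L^6}\left\|\nabla^{k}{\rm div}(n\omega)\right\|_{L^{6/5}}
\leq C\left\|\nabla^{k+1}n\right\|_{L^2}\left(\left\|\nabla^{k+1}n\right\|_{L^2}\left\|\omega\right\|_{L^3}
+\left\|n\right\|_{L^3}\left\|\nabla^{k+1}\omega\right\|_{L^2}\right),
\end{equation*}
so that the Sobolev embedding $\|\nabla^k n\|_{L^6}\leq C\|\nabla^{k+1}n\|_{L^2}$ upgrades the test-function factor to order $k+1$ and the small factors $\|\omega\|_{L^3},\|n\|_{L^3}\leq C\eta_1$ appear at low order. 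The same duality pairing (or an integration by parts moving a derivative onto the test function, as in the paper's treatment of $I_4$) is needed in essentially every term to keep both factors at order $k+1$. Your proposal acknowledges that the Sobolev exponents must be "split differently" in some cases, but as written the mechanism that makes the right-hand side of \eqref{equ3-6} homogeneous of order $k+1$ is absent, so the proof has a genuine gap at this step.
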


\begin{proof}
Taking $\nabla^{k}$ to $\eqref{equ3-3}_{1}$, $\eqref{equ3-3}_{2}$, $\eqref{equ3-3}_{3}$, multiplying the resulting equation by $\nabla^{k}n$, $\nabla^{k}\omega$, $\nabla^{k}\vartheta$, and integrating over $\mathbb{R}^3$, we have
\begin{equation}\label{equ3-7}
\begin{split}
&\frac{1}{2}\frac{d}{dt}\left\|\nabla^{k}(n,\omega,\vartheta)\right\|^2_{L^2}
+\mu\left\|\nabla^{k+1}\omega\right\|^2_{L^2}+(\mu+\lambda)\left\|\nabla^{k}{\rm div}\omega\right\|^2_{L^2}+\kappa\left\|\nabla^{k+1}\vartheta\right\|^2_{L^2}\\
=&-c\int_{\mathbb{R}^3}\nabla^{k}(n{\rm div}\omega)\cdot\nabla^{k}ndx-c\int_{\mathbb{R}^3}\nabla^{k}(\omega\cdot\nabla n)\cdot\nabla^{k}ndx\\
&-c\int_{\mathbb{R}^3}\nabla^{k}(\omega\cdot\nabla \omega)\cdot\nabla^{k}\omega dx
-\int_{\mathbb{R}^3}\nabla^{k}\left[h(n)(\mu\Delta \omega+(\mu+\lambda)\nabla{\rm div}\omega)\right]\cdot\nabla^{k}\omega dx\\
&+c\int_{\mathbb{R}^3}\nabla^{k}(h(n)\nabla n)\cdot\nabla^{k}\omega dx
-\sigma\int_{\mathbb{R}^3}\nabla^{k}(g(n)\vartheta\nabla n)\cdot\nabla^{k}\omega dx\\
&-c\int_{\mathbb{R}^3}\nabla^{k}(\omega\cdot\nabla\vartheta)\cdot\nabla^{k}\vartheta dx
-\frac{\sigma^{2}}{c}\int_{\mathbb{R}^3}\nabla^{k}(\vartheta{\rm div}\omega)\cdot\nabla^{k}\vartheta dx\\
&+\frac{\sigma}{c}\int_{\mathbb{R}^3}\nabla^{k}\left[g(n)(2\mu|D \omega|^{2}+\lambda({\rm div}\omega)^{2})\right]\cdot\nabla^{k}\vartheta dx\\
&-\kappa\int_{\mathbb{R}^3}\nabla^{k}(h(n)\Delta\vartheta)\cdot\nabla^{k}\vartheta dx
:=\sum_{i=1}^{10}I_{i}.
\end{split}
\end{equation}

Combining H\"{o}lder's inequality, Kato-Ponce's inequality, and Sobolev embedding theorem, we estimate $I_{1}$ and $I_{2}$ as
\begin{equation*}
\begin{split}
|I_{1}+I_{2}|
\leq& C\left\|\nabla^{k}n\right\|_{L^6}\left\|\nabla^{k}{\rm div}(n\omega)\right\|_{L^{\frac{6}{5}}}\\
\leq& C\left\|\nabla^{k+1}n\right\|_{L^2}\left(\left\|\nabla^{k+1}n\right\|_{L^2}\left\| \omega\right\|_{L^3}+
\left\|n\right\|_{L^3}\left\|\nabla^{k+1}\omega\right\|_{L^2}\right)\\
\leq&C\eta_{1}\left(\left\|\nabla^{k+1}n\right\|^{2}_{L^2}+\left\|\nabla^{k+1}\omega\right\|^{2}_{L^2}\right).
\end{split}
\end{equation*}

For the term $I_{3}$, if $k=0$, we have
\begin{equation*}
\begin{split}
|I_{3}|\leq
C\left\|\omega\right\|_{L^3}\left\|\nabla \omega\right\|_{L^2}\left\|\omega\right\|_{L^6}
\leq C\left\|\omega\right\|_{L^3}\left\|\nabla \omega\right\|^{2}_{L^2}\leq C\eta_{1}\left\|\nabla \omega\right\|^{2}_{L^2}.
\end{split}
\end{equation*}
If $k\geq1$, by using Gagliardo-Nirenberg's inequality, it holds that
\begin{equation*}
\begin{split}
|I_{3}|\leq &C\left\|\nabla^{k}\omega\right\|_{L^6}\left\|\nabla^{k}(\omega\cdot\nabla \omega)\right\|_{L^{\frac{6}{5}}}\\
\leq& C\left\|\nabla^{k+1}\omega\right\|_{L^2}\left(\left\|\nabla^{k}\omega\right\|_{L^2}\left\|\nabla \omega\right\|_{L^3}+
\left\|\omega\right\|_{L^3}\left\|\nabla^{k+1}\omega\right\|_{L^2}\right)\\
\leq& C\left\|\nabla^{k+1}\omega\right\|_{L^2}\Big(\left\|\nabla^{k+1}\omega\right\|^{\frac{k}{k+1}}_{L^2}\left\| \omega\right\|^{\frac{1}{k+1}}_{L^2}\left\|\nabla^{\alpha} \omega\right\|^{\frac{k}{k+1}}_{L^2}\left\|\nabla^{k+1}\omega\right\|^{\frac{1}{k+1}}_{L^2}+
\left\|\omega\right\|_{L^3}\left\|\nabla^{k+1}\omega\right\|_{L^2}\Big)\\
\leq&C\eta_{1}\left\|\nabla^{k+1}\omega\right\|^{2}_{L^2},
\end{split}
\end{equation*}
where $\alpha$ is defined by
\begin{equation*}
\alpha=\frac{1}{2}+\frac{1}{2k}\in \left(\frac{1}{2},1\right].
\end{equation*}

For the term $I_{4}$, if $k=0$, we arrive at
\begin{equation*}
\begin{split}
|I_{4}|=&\left|\mu\int_{\mathbb{R}^3}\nabla \omega\nabla(h(n)\omega)dx+(\mu+\lambda)\int_{\mathbb{R}^3}{\rm div}\omega{\rm div}(h(n)\omega)dx\right|\\
\leq &C\left\|\nabla \omega\right\|_{L^2}(\left\|n\right\|_{L^{\infty}}\left\|\nabla \omega\right\|_{L^2}+
\left\|\nabla n\right\|_{L^{3}}\left\| \omega\right\|_{L^6})
\leq C\eta_{1}\left\| \nabla \omega\right\|^{2}_{L^2}.
\end{split}
\end{equation*}
On the other hand, if $k\geq1$, by H\"{o}lder's inequality, Gagliardo-Nirenberg's inequality and Lemma \ref{lem2-3}, we deduce that
\begin{equation*}
\begin{split}
|I_{4}|=&\left|\int_{\mathbb{R}^3}\nabla^{k-1}[h(n)(\mu\Delta \omega+(\mu+\lambda)\nabla{\rm div}\omega)]
\cdot{\rm div}\nabla^{k}\omega dx\right|\\
\leq& C\left\|\nabla^{k+1}\omega\right\|_{L^2}\left(\left\|\nabla^{k-1}n\right\|_{L^6}\left\|\nabla^{2} \omega\right\|_{L^3}+
\left\|n\right\|_{L^{\infty}}\left\|\nabla^{k+1}\omega\right\|_{L^2}\right)\\
\leq& C\left\|\nabla^{k+1}\omega\right\|_{L^2}\left(\left\|\nabla^{k+1}n\right\|^{\frac{k}{k+1}}_{L^2}
\left\| n\right\|^{\frac{1}{k+1}}_{L^2}\left\|\nabla^{\beta} \omega\right\|^{\frac{k}{k+1}}_{L^2}\left\|\nabla^{k+1}\omega\right\|^{\frac{1}{k+1}}_{L^2}
+\left\|n\right\|_{L^{\infty}}\left\|\nabla^{k+1}\omega\right\|_{L^2}\right)\\
\leq&C\eta_{1}\left(\left\|\nabla^{k+1}n\right\|^{2}_{L^2}+\left\|\nabla^{k+1}\omega\right\|^{2}_{L^2}\right),
\end{split}
\end{equation*}
where $\beta$ is defined by
\begin{equation*}
\beta=\frac{3}{2}+\frac{3}{2k}\in \left(\frac{3}{2},3\right].
\end{equation*}

For the term $I_{5}$, we only need to follow the idea of the estimation method of $I_{3}$ to obtain
\begin{equation*}
|I_{5}|
\leq C\eta_{1}\left(\left\|\nabla^{k+1}n\right\|^{2}_{L^2}+\left\|\nabla^{k+1}\omega\right\|^{2}_{L^2}\right).
\end{equation*}

Moreover, by using H\"{o}lder's inequality, Kato-Ponce's inequality, Young's inequality and Lemma \ref{lem2-3} again, the term $I_{6}$ can be bounded as
\begin{equation*}
\begin{split}
|I_{6}|\leq &C\left\|\nabla^{k}\omega\right\|_{L^6}\left\|\nabla^{k}(g(n)\vartheta\nabla n)\right\|_{L^{\frac{6}{5}}}\\
\leq& C\left\|\nabla^{k+1}\omega\right\|_{L^2}\left(\left\|\nabla^{k}g(n)\right\|_{L^6}\left\|\vartheta\nabla n\right\|_{L^{\frac{3}{2}}}+
\left\|g(n)\right\|_{L^6}\left\|\nabla^{k}(\vartheta\nabla n)\right\|_{L^{\frac{3}{2}}}\right)\\
\leq&C\eta_{1}\left(\left\|\nabla^{k+1}n\right\|^{2}_{L^2}+\left\|\nabla^{k+1}\omega\right\|^{2}_{L^2}+
\left\|\nabla^{k+1}\vartheta\right\|^{2}_{L^2}\right).
\end{split}
\end{equation*}

By following the idea used to $I_{3}$, it is easy to verify that
\begin{equation*}
\begin{split}
|I_{7}|+|I_{8}|\leq C\eta_{1}\left(\left\|\nabla^{k+1}\omega\right\|^{2}_{L^2}+\left\|\nabla^{k+1}\vartheta\right\|^{2}_{L^2}\right).
\end{split}
\end{equation*}

Next, also by H\"{o}lder's inequality, Kato-Ponce's inequality, Sobolev embedding theorem and Lemma \ref{lem2-3}, we estimate the terms $I_{9}$ as
\begin{equation*}
\begin{split}
|I_{9}|\leq &C\left\|\nabla^{k}\vartheta\right\|_{L^6}\left\|\nabla^{k}[g(n)(2\mu|D \omega|^{2}+\lambda({\rm div}\omega)^{2})]\right\|_{L^{\frac{6}{5}}}\\
\leq& C\left\|\nabla^{k+1}\vartheta\right\|_{L^2}\left(\left\|\nabla^{k}g(n)\right\|_{L^6}\left\|\nabla \omega\right\|^{2}_{L^{3}}+
\left\|g(n)\right\|_{L^3}\left\|\nabla^{k}|\nabla \omega|^{2}\right\|_{L^2}\right)\\
\leq& C\left\|\nabla^{k+1}\vartheta\right\|_{L^2}\left(\left\|\nabla^{k+1}n\right\|_{L^2}\left\| \nabla \omega\right\|^{2}_{L^3}
+\left\|g(n)\right\|_{L^3}\left\|\nabla \omega\right\|_{L^\infty}\left\|\nabla^{k+1}\omega\right\|_{L^2}\right)\\
\leq&C\eta_{1}\left(\left\|\nabla^{k+1}n\right\|^{2}_{L^2}+\left\|\nabla^{k+1}\omega\right\|^{2}_{L^2}+
\left\|\nabla^{k+1}\vartheta\right\|^{2}_{L^2}\right).
\end{split}
\end{equation*}

For the term $I_{10}$, we can follow the idea used to $I_{4}$ to derive
\begin{equation*}
\begin{split}
|I_{10}|\leq C\eta_{1}\left(\left\|\nabla^{k+1}n\right\|^{2}_{L^2}+\left\|\nabla^{k+1}\vartheta\right\|^{2}_{L^2}\right).
\end{split}
\end{equation*}

Summing up the estimates $I_{i}$ $(1\leq i\leq10)$, the estimate \eqref{equ3-6} follows.
\end{proof}

\begin{lemma}\label{lem3-2}
Under the assumption \eqref{equ3-5}, it holds that
\begin{equation}\label{equ3-8}
\begin{split}
\frac{d}{dt}\left\|\nabla^{k}(n,\omega,\vartheta)\right\|^2_{L^2}
+\left\|\nabla^{k+1}(\omega,\vartheta)\right\|^2_{L^2}
\leq C\eta_{1}\left(\left\|\nabla^{k}n\right\|^2_{L^2}+\left\|\nabla^{k+1}(\omega,\vartheta)\right\|^2_{L^2}\right)
\end{split}
\end{equation}
for $k=1,\cdots,N$ ($N\geq3$).
\end{lemma}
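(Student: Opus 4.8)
The plan is to reproduce, step by step, the energy argument of Lemma~\ref{lem3-1}: apply $\nabla^{k}$ to the three equations of \eqref{equ3-3}, take the $L^{2}$ inner products with $\nabla^{k}n$, $\nabla^{k}\omega$, $\nabla^{k}\vartheta$ respectively, and add them. This reproduces the identity \eqref{equ3-7} with the same ten integrals $I_{1},\dots,I_{10}$. The terms containing no derivative of the density, namely $I_{3}$, $I_{7}$, $I_{8}$, are estimated verbatim as in Lemma~\ref{lem3-1} and already fall inside $C\eta_{1}\left\|\nabla^{k+1}(\omega,\vartheta)\right\|^{2}_{L^{2}}$. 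The gain of \eqref{equ3-8} over \eqref{equ3-6} lies entirely in re-estimating the remaining terms — $I_{1}+I_{2}$ (from the convection term $S_{1}=-c\,{\rm div}(n\omega)$) and $I_{4}$, $I_{5}$, $I_{6}$, $I_{9}$, $I_{10}$ (from the quasilinear terms $h(n)(\mu\Delta\omega+(\mu+\lambda)\nabla{\rm div}\,\omega)$, $h(n)\nabla n$, $g(n)\vartheta\nabla n$, $g(n)(2\mu|D\omega|^{2}+\lambda({\rm div}\,\omega)^{2})$, $h(n)\Delta\vartheta$) — in such a way that the density is differentiated at most $k$ times. Since $k\ge 1$, the needed extra derivative is recovered by one further integration by parts.

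For the convection term I would use the conservative form: write $\nabla^{k}S_{1}=-c\,{\rm div}\,\nabla^{k}(n\omega)$ and integrate by parts to transfer the divergence onto $\nabla^{k}n$, so that $I_{1}+I_{2}=c\int_{\mathbb{R}^{3}}\nabla^{k}(n\omega):\nabla^{k+1}n\,dx$. In the Leibniz expansion of $\nabla^{k}(n\omega)$ the only summand carrying $\nabla^{k}n$ is $(\nabla^{k}n)\,\omega$; a second integration by parts turns its contribution into $-\tfrac{c}{2}\int_{\mathbb{R}^{3}}({\rm div}\,\omega)\,|\nabla^{k}n|^{2}\,dx$, bounded by $C\left\|{\rm div}\,\omega\right\|_{L^{\infty}}\left\|\nabla^{k}n\right\|^{2}_{L^{2}}\le C\eta_{1}\left\|\nabla^{k}n\right\|^{2}_{L^{2}}$ via Sobolev embedding and \eqref{equ3-5}, while every other summand carries at most $\nabla^{k}n$ and at least one derivative of $\omega$ and is controlled through Gagliardo--Nirenberg (Lemma~\ref{lem2-1}) and \eqref{equ3-5} by $C\eta_{1}(\left\|\nabla^{k}n\right\|^{2}_{L^{2}}+\left\|\nabla^{k+1}\omega\right\|^{2}_{L^{2}})$. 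The terms $I_{5}$ and $I_{6}$ are handled by first using the exact structures $h(n)\nabla n=\nabla H(n)$, $g(n)\nabla n=\nabla G(n)$ (with $H(s)=s-\ln(1+s)$, $G(s)=\ln(1+s)$) to pull out a gradient and integrating by parts; the coefficient estimates then follow from Lemma~\ref{lem2-3} together with $H(0)=H'(0)=0$, which yields the crucial $\left\|\nabla^{k}H(n)\right\|_{L^{2}}\le C\eta_{1}\left\|\nabla^{k}n\right\|_{L^{2}}$, and from \eqref{equ3-5}. For $I_{4}$, $I_{9}$ and $I_{10}$ it is enough to replace, in the computation of Lemma~\ref{lem3-1}, the Gagliardo--Nirenberg bound of quantities such as $\left\|\nabla^{k-1}n\right\|_{L^{6}}$ (which there generated $\left\|\nabla^{k+1}n\right\|_{L^{2}}$) by the Sobolev bound $\left\|\nabla^{k-1}n\right\|_{L^{6}}\le C\left\|\nabla^{k}n\right\|_{L^{2}}$ — after, for $I_{9}$ and $I_{10}$, an integration by parts placing the top-order derivative on $\omega$ or $\vartheta$ — since the remaining factors always carry at least two derivatives of $(\omega,\vartheta)$ in $L^{3}$ and are therefore $\le C\eta_{1}$ by \eqref{equ3-5}.

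Summing the bounds for $I_{1},\dots,I_{10}$ and using the smallness of $\eta_{1}$ to keep every term on the right inside $C\eta_{1}(\left\|\nabla^{k}n\right\|^{2}_{L^{2}}+\left\|\nabla^{k+1}(\omega,\vartheta)\right\|^{2}_{L^{2}})$ yields \eqref{equ3-8}. I expect the main obstacle to be $I_{1}+I_{2}$: it is the only place where the top-order density derivative $\nabla^{k+1}n$ is genuinely unavoidable, so one must exploit the conservative structure of the continuity equation — the cancellation producing $-\tfrac12\int_{\mathbb{R}^{3}}({\rm div}\,\omega)|\nabla^{k}n|^{2}$ — rather than relying on plain product estimates; once the correct integrations by parts and the gradient structures $\nabla H(n)$, $\nabla G(n)$ are set up, the quasilinear terms $I_{4}$--$I_{10}$ are routine.
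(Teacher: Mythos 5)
Your overall strategy --- integrate by parts so that at most $k$ derivatives ever land on $n$, and exploit the $\operatorname{div}\omega$ cancellation for the top-order transport term --- is exactly the paper's, and your treatment of $I_4$, $I_5$, $I_6$, $I_9$, $I_{10}$ goes through: the Sobolev bound $\|\nabla^{k-1}n\|_{L^6}\leq C\|\nabla^{k}n\|_{L^2}$ after one integration by parts is what the paper uses, and your gradient structures $\nabla H(n)$, $\nabla G(n)$ for $I_5$, $I_6$ are a cosmetic variant of the paper's direct Kato--Ponce estimate of $\nabla^{k-1}(h(n)\nabla n)$ and $\nabla^{k-1}(g(n)\vartheta\nabla n)$.

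The gap is precisely in the step you flag as the crux. Once you write $I_1+I_2=c\int_{\mathbb{R}^3}\nabla^{k}(n\omega):\nabla^{k+1}n\,dx$, \emph{every} Leibniz summand of $\nabla^{k}(n\omega)$ is paired with $\nabla^{k+1}n$, not only the one carrying $\nabla^{k}n$. For a summand $\nabla^{j}n\otimes\nabla^{k-j}\omega$ with $j\leq k-1$, a H\"older bound of $\int(\nabla^{j}n\otimes\nabla^{k-j}\omega)\cdot\nabla^{k+1}n\,dx$ inevitably produces the factor $\|\nabla^{k+1}n\|_{L^2}$, which cannot be absorbed by the right-hand side of \eqref{equ3-8}: the absence of $\|\nabla^{k+1}n\|^{2}_{L^2}$ there is the entire point of Lemma \ref{lem3-2} as compared with Lemma \ref{lem3-1}. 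So your claim that these remaining summands are ``controlled by $C\eta_1(\|\nabla^{k}n\|^2_{L^2}+\|\nabla^{k+1}\omega\|^2_{L^2})$'' does not follow from what you wrote. The repair is routine but must be stated: either integrate by parts once more in each such term, returning the extra derivative to $\nabla^{j}n\otimes\nabla^{k-j}\omega$ (which then carries at most $\nabla^{k}n$ and at most $\nabla^{k+1}\omega$), or --- cleaner, and what the paper does --- do not put the convection term in divergence form at all: $I_1=-c\int\nabla^{k}(n\operatorname{div}\omega)\cdot\nabla^{k}n\,dx$ already places at most $k$ derivatives on $n$ by Leibniz, and in $I_2$ only the single top-order term $\int(\omega\cdot\nabla\nabla^{k}n)\cdot\nabla^{k}n\,dx=-\tfrac12\int\operatorname{div}\omega\,|\nabla^{k}n|^{2}dx$ needs the cancellation, the remaining commutator terms of the form $\nabla^{k-1}(\nabla\omega\cdot\nabla n)$ being estimated directly in $L^2$.
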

\begin{proof}
In this case, we have to estimate the terms containing $n$ in $I_{i}$ $(1\leq i\leq 10)$. First, through H\"{o}lder's inequality and Kato-Ponce's inequality, it is easy to verify that
\begin{equation*}
\begin{split}
|I_{1}|\leq &C\left\|\nabla^{k}n\right\|_{L^2}\left\|\nabla^{k}(n{\rm div}\omega)\right\|_{L^{2}}\\
\leq& C\left\|\nabla^{k}n\right\|_{L^2}\left(\left\|\nabla^{k}n\right\|_{L^2}\left\|\nabla \omega\right\|_{L^\infty}+
\left\|n\right\|_{L^\infty}\left\|\nabla^{k+1}\omega\right\|_{L^2}\right)\\
\leq&C\eta_{1}\left(\left\|\nabla^{k}n\right\|^{2}_{L^2}+\left\|\nabla^{k+1}\omega\right\|^{2}_{L^2}\right).
\end{split}
\end{equation*}

For the term $I_{2}$, integrating by parts, using H\"{o}lder's inequality, Kato-Ponce's inequality and Sobolev embedding theorem, we derive that
\begin{equation*}
\begin{split}
|I_{2}|\leq&C\left|\int_{\mathbb{R}^3}\omega\cdot\nabla^{k+1}n\cdot\nabla^{k}ndx\right|+
C\left|\int_{\mathbb{R}^3}\nabla^{k-1}(\nabla \omega\cdot\nabla n)\cdot\nabla^{k}ndx\right|\\
\leq &C\left|\int_{\mathbb{R}^3}\omega\cdot\nabla\frac{|\nabla^{k}n|^{2}}{2}dx\right|
+C\left\|\nabla^{k}n\right\|_{L^2}\left\|\nabla^{k-1}(\nabla \omega\cdot\nabla n)\right\|_{L^{2}}\\
\leq &C\left|\int_{\mathbb{R}^3}{\rm div}\omega\cdot\frac{|\nabla^{k}n|^{2}}{2}dx\right|
+C\left\|\nabla^{k}n\right\|_{L^2}\left\|\nabla^{k-1}(\nabla \omega\cdot\nabla n)\right\|_{L^{2}}\\
\leq&C\left\|\nabla \omega\right\|_{L^\infty}\left\|\nabla^{k}n\right\|^{2}_{L^2}
 +C\left\|\nabla^{k}n\right\|_{L^2}\left(\left\|\nabla^{k}\omega\right\|_{L^6}\left\|\nabla n\right\|_{L^3}+
\left\|\nabla \omega\right\|_{L^\infty}\left\|\nabla^{k}n\right\|_{L^2}\right)\\
\leq&C\eta_{1}\left(\left\|\nabla^{k}n\right\|^{2}_{L^2}+\left\|\nabla^{k+1}\omega\right\|^{2}_{L^2}\right).
\end{split}
\end{equation*}

Integrating by parts, then invoking H\"{o}lder's inequality, Kato-Ponce's inequality and Lemma \ref{lem2-3}, we estimate $I_{4}$ and $I_{5}$ as
\begin{equation*}
\begin{split}
|I_{4}|=&\left|\int_{\mathbb{R}^3}\nabla^{k-1}[h(n)(\mu\Delta \omega+(\mu+\lambda)\nabla{\rm div}\omega)]
\cdot{\rm div}\nabla^{k}\omega dx\right|\\
\leq& C\left\|\nabla^{k+1}\omega\right\|_{L^2}\left(\left\|\nabla^{k-1}n\right\|_{L^6}\left\|\nabla^{2} \omega\right\|_{L^3}+
\left\|n\right\|_{L^{\infty}}\left\|\nabla^{k+1}\omega\right\|_{L^2}\right)\\
\leq&C\eta_{1}\left(\left\|\nabla^{k}n\right\|^{2}_{L^2}+\left\|\nabla^{k+1}\omega\right\|^{2}_{L^2}\right),
\end{split}
\end{equation*}
and
\begin{equation*}
\begin{split}
|I_{5}|\leq&C\left|\int_{\mathbb{R}^3}\nabla^{k-1}(h(n)\nabla n)\cdot{\rm div}\nabla^{k}\omega dx\right|\\
\leq& C\left\|\nabla^{k+1}\omega\right\|_{L^2}\left(\left\|\nabla^{k-1}n\right\|_{L^6}\left\|\nabla n\right\|_{L^3}+
\left\|n\right\|_{L^\infty}\left\|\nabla^{k}n\right\|_{L^2}\right)\\
\leq&C\eta_{1}\left(\left\|\nabla^{k}n\right\|^{2}_{L^2}+\left\|\nabla^{k+1}\omega\right\|^{2}_{L^2}\right).
\end{split}
\end{equation*}

For the term $I_{6}$, if $k=1$, we have
\begin{equation*}
\begin{split}
|I_{6}|\leq&C\left|\int_{\mathbb{R}^3}\left(g(n)\vartheta\nabla n\right)\cdot{\rm div}\nabla\omega dx\right|\\
\leq &C\left\|g(n)\right\|_{L^\infty}\left\|\vartheta\right\|_{L^\infty}
\left\|\nabla n\right\|_{L^2}\left\|\nabla^{2}\omega\right\|_{L^2}\\
\leq &C\eta_{1}\left(\left\|\nabla n\right\|^{2}_{L^2}+\left\|\nabla^{2}\omega\right\|^{2}_{L^2}\right).
\end{split}
\end{equation*}
On the other hand, if $k\geq2$, employing H\"{o}lder's inequality, Kato-Ponce's inequality, Gagliardo-Nirenberg's inequality and
Lemma \ref{lem2-3}, we arrive at
\begin{equation*}
\begin{split}
|I_{6}|\leq&C\left|\int_{\mathbb{R}^3}\nabla^{k-1}(g(n)\vartheta\nabla n)\cdot{\rm div}\nabla^{k}\omega dx\right|\\
\leq& C\left\|\nabla^{k+1}\omega\right\|_{L^2}\Big[\left\|\nabla^{k-1}g(n)\right\|_{L^6}\left\|\vartheta\right\|_{L^{6}}
\left\|\nabla n\right\|_{L^{6}}\\&+
\left\|g(n)\right\|_{L^\infty}\left(\left\|\vartheta\right\|_{L^\infty}\left\|\nabla^{k}n\right\|_{L^2}+
\left\|\nabla^{k-1}\vartheta\right\|_{L^6}\left\|\nabla n\right\|_{L^3}\right)\Big]\\
\leq& C\left\|\nabla^{k+1}\omega\right\|_{L^2}\Big(\left\|\nabla^{k}n\right\|_{L^2}\left\|\vartheta\right\|_{L^{6}}
\left\|\nabla n\right\|_{L^{6}}+\left\|\vartheta\right\|_{L^\infty}\left\|\nabla^{k}n\right\|_{L^2}\\
&+\left\|\nabla\vartheta\right\|^{\frac{1}{k}}_{L^2}\left\| \nabla^{k+1}\vartheta\right\|^{\frac{k-1}{k}}_{L^2}
\left\|\nabla^{\sigma} n\right\|^{\frac{k-1}{k}}_{L^2}\left\|\nabla^{k}n\right\|^{\frac{1}{k}}_{L^2}\Big)
\\
\leq&C\eta_{1}\left(\left\|\nabla^{k}n\right\|^{2}_{L^2}+\left\|\nabla^{k+1}\omega\right\|^{2}_{L^2}+
\left\|\nabla^{k+1}\vartheta\right\|^{2}_{L^2}\right),
\end{split}
\end{equation*}
where $\sigma$ is defined by
\begin{equation*}
\sigma=\frac{k}{2(k-1)}\in\left[1,\frac{1}{2}\right).
\end{equation*}

By using H\"{o}lder's inequality, Kato-Ponce's inequality, Sobolev embedding theorem, and Lemma \ref{lem2-3}, we deduce that
\begin{equation*}
\begin{split}
|I_{9}|\leq &C\left\|\nabla^{k}\vartheta\right\|_{L^6}\left\|\nabla^{k}[g(n)(2\mu|D \omega|^{2}+\lambda({\rm div}\omega)^{2})]\right\|_{L^{\frac{6}{5}}}\\
\leq& C\left\|\nabla^{k+1}\vartheta\right\|_{L^2}\left(\left\|\nabla^{k}g(n)\right\|_{L^2}\left\|\nabla \omega\right\|^{2}_{L^{6}}+
\left\|g(n)\right\|_{L^3}\left\|\nabla^{k}|\nabla \omega|^{2}\right\|_{L^2}\right)\\
\leq&C\eta_{1}\left(\left\|\nabla^{k}n\right\|^{2}_{L^2}+\left\|\nabla^{k+1}\omega\right\|^{2}_{L^2}+
\left\|\nabla^{k+1}\vartheta\right\|^{2}_{L^2}\right).
\end{split}
\end{equation*}

Moreover, by following the idea used to $I_{4}$, the term $I_{10}$ can be bounded as
\begin{equation*}
|I_{10}|\leq C\eta_{1}\left(\left\|\nabla^{k}n\right\|^{2}_{L^2}+\left\|\nabla^{k+1}\vartheta\right\|^{2}_{L^2}\right).
\end{equation*}

Summing up the estimates $I_{i}$ $(1\leq i\leq10)$, the estimate \eqref{equ3-8} follows.
\end{proof}
\begin{lemma}\label{lem3-3}
Under the assumption \eqref{equ3-5}, it holds that
\begin{equation}\label{equ3-9}
\begin{split}
&\frac{d}{dt}\int_{\mathbb{R}^3}\nabla^{k}\omega\cdot\nabla^{k+1}n dx
+\frac{c}{2}\left\|\nabla^{k+1}n\right\|^2_{L^2}-c\left\|\nabla^{k}{\rm div}\omega\right\|^2_{L^2}\\
\leq&C\eta_{1}\left(\left\|\nabla^{k+1}(n,\omega,\vartheta)\right\|^2_{L^2}
+\left\|\nabla^{k+2}\omega\right\|^2_{L^2}\right)
+C\left(\left\|\nabla^{k+1}\vartheta\right\|^2_{L^2}+\left\|\nabla^{k+2}\omega\right\|^2_{L^2}\right)
\end{split}
\end{equation}
for $k=0,\cdots,N-1$ ($N\geq3$).
\end{lemma}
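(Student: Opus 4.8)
The plan is to run the standard ``interaction'' argument that supplies the dissipation missing for $n$ in $\eqref{equ3-3}_{1}$. I introduce the cross functional $E_{k}(t):=\int_{\mathbb{R}^3}\nabla^{k}\omega\cdot\nabla^{k+1}n\,dx$ and differentiate in time, using $\frac{d}{dt}E_{k}=\int_{\mathbb{R}^3}\partial_{t}\nabla^{k}\omega\cdot\nabla^{k+1}n\,dx+\int_{\mathbb{R}^3}\nabla^{k}\omega\cdot\partial_{t}\nabla^{k+1}n\,dx$. Substituting $\partial_{t}\nabla^{k+1}n=-c\nabla^{k+1}{\rm div}\,\omega+\nabla^{k+1}S_{1}$ from $\eqref{equ3-3}_{1}$ and integrating by parts in $-c\int_{\mathbb{R}^3}\nabla^{k}\omega\cdot\nabla^{k+1}{\rm div}\,\omega\,dx$ produces $+c\|\nabla^{k}{\rm div}\,\omega\|_{L^2}^{2}$ together with the nonlinear term $\int_{\mathbb{R}^3}\nabla^{k}\omega\cdot\nabla^{k+1}S_{1}\,dx$; substituting $\partial_{t}\nabla^{k}\omega=\mu\Delta\nabla^{k}\omega+(\mu+\lambda)\nabla{\rm div}\,\nabla^{k}\omega-c\nabla^{k+1}n-\sigma\nabla^{k+1}\vartheta+\nabla^{k}S_{2}$ from $\eqref{equ3-3}_{2}$ produces the main gain $-c\|\nabla^{k+1}n\|_{L^2}^{2}$, three ``linear remainder'' terms built from $\mu\Delta\omega$, $(\mu+\lambda)\nabla{\rm div}\,\omega$, $\sigma\nabla\vartheta$, and the nonlinear term $\int_{\mathbb{R}^3}\nabla^{k}S_{2}\cdot\nabla^{k+1}n\,dx$. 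Moving $c\|\nabla^{k+1}n\|_{L^2}^{2}$ and $c\|\nabla^{k}{\rm div}\,\omega\|_{L^2}^{2}$ to the left already reproduces the algebraic skeleton of \eqref{equ3-9}, so only the remaining terms need to be estimated.

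For the linear remainder I would use Cauchy--Schwarz and Young's inequality with a small parameter $\varepsilon$. Since $\Delta\nabla^{k}\omega$ and $\nabla{\rm div}\,\nabla^{k}\omega$ are linear combinations of $(k{+}2)$-th order derivatives of $\omega$, the two viscous terms are bounded by $\varepsilon\|\nabla^{k+1}n\|_{L^2}^{2}+C_{\varepsilon}\|\nabla^{k+2}\omega\|_{L^2}^{2}$, and the temperature coupling $-\sigma\int_{\mathbb{R}^3}\nabla^{k+1}\vartheta\cdot\nabla^{k+1}n\,dx$ by $\varepsilon\|\nabla^{k+1}n\|_{L^2}^{2}+C_{\varepsilon}\|\nabla^{k+1}\vartheta\|_{L^2}^{2}$. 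Choosing $\varepsilon$ so small that the cumulative $\varepsilon$-contribution does not exceed $c/2$, one absorbs it into $c\|\nabla^{k+1}n\|_{L^2}^{2}$, which leaves $+\frac{c}{2}\|\nabla^{k+1}n\|_{L^2}^{2}$ on the left and exactly the term $C\bigl(\|\nabla^{k+1}\vartheta\|_{L^2}^{2}+\|\nabla^{k+2}\omega\|_{L^2}^{2}\bigr)$ — carrying a bare constant rather than a factor $\eta_{1}$ — on the right of \eqref{equ3-9}. The appearance of $\|\nabla^{k+2}\omega\|_{L^2}^{2}$ is what forces the range $k\le N-1$.

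The nonlinear terms are handled exactly as the $I_{i}$ in the proofs of Lemmas \ref{lem3-1} and \ref{lem3-2}, splitting into the cases $k=0$ and $k\ge1$. For $\int_{\mathbb{R}^3}\nabla^{k}\omega\cdot\nabla^{k+1}S_{1}\,dx$, since $S_{1}=-c\,{\rm div}(n\omega)$, I integrate by parts to transfer one derivative onto $\nabla^{k}\omega$ and then apply the Kato--Ponce inequality (Lemma \ref{lem2-2}) and Sobolev embedding to reach a bound $C\eta_{1}\bigl(\|\nabla^{k+1}n\|_{L^2}^{2}+\|\nabla^{k+1}\omega\|_{L^2}^{2}\bigr)$. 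For $\int_{\mathbb{R}^3}\nabla^{k}S_{2}\cdot\nabla^{k+1}n\,dx$ I expand $S_{2}$ termwise: the convective term $-c\omega\cdot\nabla\omega$ and the pressure-type terms $ch(n)\nabla n$ and $-\sigma g(n)\vartheta\nabla n$ are treated as $I_{3}$, $I_{5}$, $I_{6}$ (via H\"{o}lder's, the Kato--Ponce and the Gagliardo--Nirenberg inequalities together with Lemma \ref{lem2-3}), producing $C\eta_{1}\|\nabla^{k+1}(n,\omega,\vartheta)\|_{L^2}^{2}$, while the quasilinear term $-h(n)\bigl(\mu\Delta\omega+(\mu+\lambda)\nabla{\rm div}\,\omega\bigr)$ is treated as $I_{4}$: after an integration by parts the derivatives are distributed so that the small factor $h(n)$, estimated through Lemma \ref{lem2-3}, multiplies at worst $\|\nabla^{k+2}\omega\|_{L^2}^{2}$, contributing $C\eta_{1}\bigl(\|\nabla^{k+1}n\|_{L^2}^{2}+\|\nabla^{k+2}\omega\|_{L^2}^{2}\bigr)$. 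Summing all contributions and invoking the a priori smallness \eqref{equ3-5} yields \eqref{equ3-9}.

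I expect the main obstacle to be this last quasilinear term $h(n)\Delta\omega$ inside $\int_{\mathbb{R}^3}\nabla^{k}S_{2}\cdot\nabla^{k+1}n\,dx$: one must arrange, by an integration by parts and a careful Gagliardo--Nirenberg interpolation mirroring the treatment of $I_{4}$ (including an auxiliary interpolation exponent such as the $\beta$ appearing there), that no derivative of order higher than $k+2$ ever lands on $\omega$ and that the genuinely top-order factor always carries either $h(n)$ or another $\eta_{1}$. Beyond that the argument is bookkeeping — keeping track of which terms come with a factor $\eta_{1}$ (all the nonlinear ones) and which with a plain constant $C$ (only the linear viscous and temperature couplings) — together with the separate treatment of the low-order cases $k=0$ (and $k=1$ where needed), exactly as in Lemmas \ref{lem3-1} and \ref{lem3-2}.
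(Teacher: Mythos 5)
Your proposal is correct and follows essentially the same route as the paper: differentiate the cross functional $\int_{\mathbb{R}^3}\nabla^{k}\omega\cdot\nabla^{k+1}n\,dx$, use Cauchy--Schwarz and Young's inequality to absorb the linear viscous and temperature couplings into $\frac{c}{2}\|\nabla^{k+1}n\|^2_{L^2}$ (leaving the bare-constant terms $C(\|\nabla^{k+1}\vartheta\|^2_{L^2}+\|\nabla^{k+2}\omega\|^2_{L^2})$), and treat the nonlinear terms with H\"older, Kato--Ponce, Gagliardo--Nirenberg and Lemma \ref{lem2-3} under the smallness \eqref{equ3-5}, exactly as the paper's $I_{11}$--$I_{17}$. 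One small correction: for the quasilinear term $\nabla^{k}[h(n)(\mu\Delta\omega+(\mu+\lambda)\nabla\mathrm{div}\,\omega)]$ paired with $\nabla^{k+1}n$ you should \emph{not} integrate by parts (that would create $\nabla^{k+2}n$, which is uncontrolled at $k=N-1$); the direct Kato--Ponce estimate $\|\nabla^{k}[h(n)\nabla^{2}\omega]\|_{L^2}\leq C(\|\nabla^{k+1}n\|_{L^2}\|\nabla^{2}\omega\|_{L^3}+\|n\|_{L^\infty}\|\nabla^{k+2}\omega\|_{L^2})$ already yields the claimed $C\eta_{1}(\|\nabla^{k+1}n\|^2_{L^2}+\|\nabla^{k+2}\omega\|^2_{L^2})$, which is what the paper does.
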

\begin{proof}
Taking $\nabla^{k+1}$ to $\eqref{equ3-3}_{1}$, $\nabla^{k}$ to $\eqref{equ3-3}_{2}$, multiplying the resulting equation by $\nabla^{k}\omega$, $\nabla^{k+1}n$, and integrating over $\mathbb{R}^3$, we have
\begin{equation}\label{equ3-10}
\begin{split}
&\frac{d}{dt}\int_{\mathbb{R}^3}\nabla^{k}\omega\cdot\nabla^{k+1}n dx
+c\left\|\nabla^{k+1}n\right\|^2_{L^2}
-c\left\|\nabla^{k}{\rm div}\omega\right\|^2_{L^2}\\
=&\int_{\mathbb{R}^3}\nabla^{k}(\mu\Delta \omega+(\mu+\lambda)\nabla{\rm div}\omega)\cdot\nabla^{k+1}ndx
-\sigma\int_{\mathbb{R}^3}\nabla^{k+1}\vartheta\cdot\nabla^{k+1}ndx\\
&+c\int_{\mathbb{R}^3}\nabla^{k}{\rm div}(n\omega)\cdot{\rm div}\nabla^{k}\omega dx
-c\int_{\mathbb{R}^3}\nabla^{k}(\omega\cdot\nabla \omega)\cdot\nabla^{k+1}ndx\\
&-\int_{\mathbb{R}^3}\nabla^{k}\left[h(n)(\mu\Delta \omega+(\mu+\lambda)\nabla{\rm div}\omega)\right]\cdot\nabla^{k+1}ndx\\
&+c\int_{\mathbb{R}^3}\nabla^{k}(h(n)\nabla n)\cdot\nabla^{k+1}ndx
-\sigma\int_{\mathbb{R}^3}\nabla^{k}(g(n)\vartheta\nabla n)\cdot\nabla^{k+1}ndx\\
:=&\sum_{i=11}^{17}I_{i}.
\end{split}
\end{equation}

First, H\"{o}lder's inequality and Young's inequality imply that
\begin{equation*}
\begin{split}
|I_{11}|
\leq &C\left\|\nabla^{k+2}\omega\right\|_{L^2}\left\|\nabla^{k+1}n\right\|_{L^{2}}
\leq C\left\|\nabla^{k+2}\omega\right\|^{2}_{L^2}+\frac{c}{4}\left\|\nabla^{k+1}n\right\|^{2}_{L^{2}},
\end{split}
\end{equation*}
and
\begin{equation*}
\begin{split}
|I_{12}|
\leq &C\left\|\nabla^{k+1}\vartheta\right\|_{L^2}\left\|\nabla^{k+1}n\right\|_{L^{2}}
\leq C\left\|\nabla^{k+1}\vartheta\right\|^{2}_{L^2}+\frac{c}{4}\left\|\nabla^{k+1}n\right\|^{2}_{L^{2}}.
\end{split}
\end{equation*}

For $I_{13}$--$I_{16}$, by H\"{o}lder's inequality, Kato-Ponce's inequality, and Sobolev embedding theorem, we obtain
\begin{equation*}
\begin{split}
|I_{13}|
\leq &C\left\|{\rm div}\nabla^{k}\omega\right\|_{L^2}\left\|\nabla^{k}{\rm div}(n\omega)\right\|_{L^{2}}\\
\leq& C\left\|\nabla^{k+1}\omega\right\|_{L^2}\left(\left\|\nabla^{k+1}n\right\|_{L^2}\left\|\omega\right\|_{L^\infty}+
\left\|n\right\|_{L^{\infty}}\left\|\nabla^{k+1}\omega\right\|_{L^2}\right)\\
\leq&C\eta_{1}\left(\left\|\nabla^{k+1}n\right\|^{2}_{L^2}+\left\|\nabla^{k+1}\omega\right\|^{2}_{L^2}\right),
\end{split}
\end{equation*}
\begin{equation*}
\begin{split}
|I_{14}|
\leq &C\left\|\nabla^{k+1}n\right\|_{L^2}\left\|\nabla^{k}(\omega\cdot\nabla \omega)\right\|_{L^{2}}\\
\leq& C\left\|\nabla^{k+1}n\right\|_{L^2}\left(\left\|\nabla^{k}\omega\right\|_{L^6}\left\|\nabla \omega\right\|_{L^3}+
\left\|\omega\right\|_{L^{\infty}}\left\|\nabla^{k+1}\omega\right\|_{L^2}\right)\\
\leq&C\eta_{1}\left(\left\|\nabla^{k+1}n\right\|^{2}_{L^2}+\left\|\nabla^{k+1}\omega\right\|^{2}_{L^2}\right),
\end{split}
\end{equation*}
\begin{equation*}
\begin{split}
|I_{15}|
\leq &C\left\|\nabla^{k+1}n\right\|_{L^2}\left\|\nabla^{k}[h(n)(\mu\Delta \omega+(\mu+\lambda)\nabla{\rm div}\omega)]\right\|_{L^{2}}\\
\leq& C\left\|\nabla^{k+1}n\right\|_{L^2}\left(\left\|\nabla^{k}n\right\|_{L^6}\left\|\nabla^{2} \omega\right\|_{L^3}+
\left\|n\right\|_{L^{\infty}}\left\|\nabla^{k+2}\omega\right\|_{L^2}\right)\\
\leq&C\eta_{1}\left(\left\|\nabla^{k+1}n\right\|^{2}_{L^2}+\left\|\nabla^{k+2}\omega\right\|^{2}_{L^2}\right),
\end{split}
\end{equation*}
\begin{equation*}
\begin{split}
|I_{16}|
\leq &C\left\|\nabla^{k+1}n\right\|_{L^2}\left\|\nabla^{k}(h(n)\nabla n)\right\|_{L^{2}}\\
\leq& C\left\|\nabla^{k+1}n\right\|_{L^2}\left(\left\|\nabla^{k}n\right\|_{L^6}\left\|\nabla n\right\|_{L^3}+
\left\|n\right\|_{L^{\infty}}\left\|\nabla^{k+1}n\right\|_{L^2}\right)\\
\leq&C\eta_{1}\left\|\nabla^{k+1}n\right\|^{2}_{L^2}.
\end{split}
\end{equation*}

Furthermore, applying H\"{o}lder's inequality, Kato-Ponce's inequality, Sobolev embedding theorem and Lemma \ref{lem2-3}, we estimate $I_{17}$ as
\begin{equation*}
\begin{split}
|I_{17}|
\leq &C\left\|\nabla^{k+1}n\right\|_{L^2}\left\|\nabla^{k}(g(n)\vartheta\nabla n)\right\|_{L^{2}}\\
\leq& C\left\|\nabla^{k+1}n\right\|_{L^2}\left(\left\|\nabla^{k}g(n)\right\|_{L^6}\left\|\vartheta\right\|_{L^6}\left\|\nabla n\right\|_{L^6}
+\left\|g(n)\right\|_{L^{\infty}}\left\|\nabla^{k}(\vartheta\nabla n)\right\|_{L^2}\right)\\
\leq& C\left\|\nabla^{k+1}n\right\|_{L^2}\Big(\left\|\nabla^{k+1}n\right\|_{L^2}\left\|\vartheta\right\|_{L^6}\left\|\nabla n\right\|_{L^6}
+\left\|\nabla^{k}\vartheta\right\|_{L^6}\left\|\nabla n\right\|_{L^{3}}
\\&+\left\|\vartheta\right\|_{L^{\infty}}\left\|\nabla^{k+1}n\right\|_{L^2}\Big)\\
\leq&C\eta_{1}\left(\left\|\nabla^{k+1}n\right\|^{2}_{L^2}+\left\|\nabla^{k+1}\vartheta\right\|^{2}_{L^2}\right).
\end{split}
\end{equation*}
Combining $I_{i}$ $(11\leq i\leq17)$, we give \eqref{equ3-9} and complete the proof of Lemma \ref{lem3-3}.
\end{proof}

\noindent \textbf{Proof of Theorem \ref{the1-1}}. Suppose that $N\geq3$, summing up the estimate \eqref{equ3-6} of Lemma \ref{lem3-1} from $k=l$ to $m-1$ $(1\leq m\leq N)$, then adding \eqref{equ3-8} of Lemma \ref{lem3-2} for $k=m$ to the results, we have
\begin{equation}\label{equ3-11}
\begin{split}
&\frac{d}{dt}\sum_{l\leq k\leq m}\left\|\nabla^{k}(n,\omega,\vartheta)\right\|^2_{L^2}
+\sum_{l\leq k\leq m}\left\|\nabla^{k+1}(\omega,\vartheta)\right\|^2_{L^2}\\
\leq&C\eta_{1}\sum_{l+1\leq k\leq m}\left\|\nabla^{k} n\right\|^2_{L^2}+
C\eta_{1}\sum_{l\leq k\leq m}\left\|\nabla^{k+1} (\omega,\vartheta)\right\|^2_{L^2}.
\end{split}
\end{equation}
Multiplying \eqref{equ3-9} of Lemma \ref{lem3-3} by $\varepsilon$, summing up the resulting equation from $k=l$ to $m-1$ and
combining \eqref{equ3-11}, then choosing $\varepsilon,\eta_{1}>0$ to be small, there holds
\begin{equation}\label{equ3-12}
\frac{d}{dt}\Psi^{m}_{l}(t)+\Gamma^{m}_{l}(t)\leq0,
\end{equation}
where
\begin{equation*}
\Psi^{m}_{l}(t)\simeq \left\|\nabla^{l}n\right\|^2_{H^{m-l}}+\left\|\nabla^{l}\omega\right\|^2_{H^{m-l}}+\left\|\nabla^{l}\vartheta\right\|^2_{H^{m-l}},
\end{equation*}
and
\begin{equation*}
\Gamma^{m}_{l}(t)\simeq \left\|\nabla^{l+1} n\right\|^2_{H^{m-l-1}}+\left\|\nabla^{l+1} \omega\right\|^2_{H^{m-l}}+\left\|\nabla^{l+1}\vartheta\right\|^2_{H^{m-l}}.
\end{equation*}
Let $l=0$, $m=3$, we have
\begin{align*}
\quad \left\|n\right\|^{2}_{H^3}+\left\|\omega\right\|^{2}_{H^{3}}+\left\|\vartheta\right\|^{2}_{H^{3}}
\leq C(\left\|n_{0}\right\|^{2}_{H^3}+\left\|\omega_{0}\right\|^{2}_{H^{3}}+\left\|\vartheta_{0}\right\|^{2}_{H^{3}})\leq C\eta_{0},
\end{align*}
due to the smallness of $\eta_{0}>0$, which yields the priori estimate \eqref{equ3-5}. Let $l=0$ and $m=N$ in \eqref{equ3-12}, and
then integrate it directly in time to obtain \eqref{equ1-5}.

\section{The proof of Theorem \ref{the1-2}}\label{sec4}
In this section, we derive the evolution of the negative Sobolev norms of the solution and obtain the decay estimate \eqref{equ1-7} for $l=0,\cdot\cdot\cdot N-1$. Subsequently, we combine our derived results to establish the decay estimate for $l=N$.

\begin{lemma}\label{lem4-1}
For $s\in[0,\frac{1}{2}]$, we have
\begin{equation}\label{equ4-1}
\begin{split}
&\frac{d}{dt}\left(\left\|\Lambda^{-s}n\right\|^2_{L^2}+\left\|\Lambda^{-s}\omega\right\|^2_{L^2}+
\left\|\Lambda^{-s}\vartheta\right\|^2_{L^2}\right)
\leq C\Big(\left\|\nabla n\right\|^2_{H^1}+\left\|\nabla \omega\right\|^2_{H^1}\\&+\left\|\nabla \vartheta\right\|^2_{H^1}\Big)
\Big(\left\|\Lambda^{-s}n\right\|_{L^2}+\left\|\Lambda^{-s}\omega\right\|_{L^2}
+\left\|\Lambda^{-s}\vartheta\right\|_{L^2}\Big).
\end{split}
\end{equation}
Moreover, for $s\in(\frac{1}{2},\frac{3}{2})$, we have
\begin{equation}\label{equ4-2}
\begin{split}
&\frac{d}{dt}\left(\left\|\Lambda^{-s}n\right\|^2_{L^2}+\left\|\Lambda^{-s}\omega\right\|^2_{L^2}+
\left\|\Lambda^{-s}\vartheta\right\|^2_{L^2}\right)\\
\leq& C\left(\left\|n\right\|_{L^2}+\left\| \omega\right\|_{L^2}+\left\| \vartheta\right\|_{L^2}\right)^{s-\frac{1}{2}}
\left(\left\|\nabla n\right\|_{L^2}+\left\|\nabla \omega\right\|_{L^2}+\left\| \nabla\vartheta\right\|_{L^2}\right)^{\frac{3}{2}-s}\cdot\\
&\left(\left\|\nabla n\right\|_{L^2}+\left\|\nabla \omega\right\|_{L^2}+\left\| \nabla\vartheta\right\|_{L^2}+\left\|\nabla^{2} \omega\right\|_{L^2}+\left\|\Delta\vartheta\right\|_{L^2}\right)
\cdot\\
&\left(\left\|\Lambda^{-s}n\right\|_{L^2}+\left\|\Lambda^{-s}\omega\right\|_{L^2}
+\left\|\Lambda^{-s}\vartheta\right\|_{L^2}\right).
\end{split}
\end{equation}
\end{lemma}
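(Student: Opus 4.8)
The plan is to apply $\Lambda^{-s}$ to each equation of the reformulated system \eqref{equ3-3}, pair the result in $L^2$ with $\Lambda^{-s}n$, $\Lambda^{-s}\omega$, $\Lambda^{-s}\vartheta$ respectively, and add. On the left side the linear terms $c\,\mathrm{div}\,\omega$, $c\nabla n$, $\sigma\nabla\vartheta$, $\sigma\,\mathrm{div}\,\omega$ cancel after integration by parts (since $\Lambda^{-s}$ commutes with derivatives and the coupling is antisymmetric in the pairs $(n,\omega)$ and $(\vartheta,\omega)$), while the dissipative terms $-\mu\Delta\omega-(\mu+\lambda)\nabla\mathrm{div}\,\omega$ and $-\kappa\Delta\vartheta$ produce nonnegative contributions $\mu\|\Lambda^{-s}\nabla\omega\|_{L^2}^2 + (\mu+\lambda)\|\Lambda^{-s}\mathrm{div}\,\omega\|_{L^2}^2 + \kappa\|\Lambda^{-s}\nabla\vartheta\|_{L^2}^2$ that we simply discard. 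Hence
\[
\frac{1}{2}\frac{d}{dt}\Big(\|\Lambda^{-s}n\|_{L^2}^2+\|\Lambda^{-s}\omega\|_{L^2}^2+\|\Lambda^{-s}\vartheta\|_{L^2}^2\Big) \le \sum_{i=1}^{3}\big|\langle \Lambda^{-s}S_i,\ \Lambda^{-s}(\cdot)\rangle\big|,
\]
and everything reduces to estimating the negative-order norms of the nonlinearities $S_1,S_2,S_3$.

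For each term I would use Cauchy–Schwarz to peel off one factor $\|\Lambda^{-s}n\|_{L^2}$, $\|\Lambda^{-s}\omega\|_{L^2}$ or $\|\Lambda^{-s}\vartheta\|_{L^2}$, leaving a factor of the form $\|\Lambda^{-s}S_i\|_{L^2}$. The key tool for the remaining factor is Lemma \ref{lem2-4}: $\|\Lambda^{-s}f\|_{L^2}\le C\|f\|_{L^p}$ with $\tfrac1p=\tfrac12+\tfrac{s}{3}$, valid for $s\in[0,\tfrac32)$. One then bounds $\|S_i\|_{L^p}$ by a product of two factors via Hölder's inequality, distributing the integrability exponents so that each factor is controlled by $L^2$ of one to two derivatives through Sobolev embedding (for the low derivative count) and Gagliardo–Nirenberg (Lemma \ref{lem2-1}); the composition factors $h(n)$, $g(n)$ are handled by Lemma \ref{lem2-3} together with the a priori smallness \eqref{equ3-5} which makes $\|n\|_{L^\infty}\le 1$. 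For $s\in[0,\tfrac12]$ this yields the clean form \eqref{equ4-1}: every $S_i$ is at least quadratic with at least one derivative on each factor, so after using Lemma \ref{lem2-4} one lands on $\|\nabla(n,\omega,\vartheta)\|_{H^1}^2$ times the negative-norm factor. For $s\in(\tfrac12,\tfrac32)$ the exponent $p$ drops below $\tfrac65$, so a direct Sobolev embedding is no longer available for the "low" factor; instead I would interpolate that factor between $L^2$ and $\dot H^1$ (again Lemma \ref{lem2-1}), which is exactly what produces the split powers $\|(n,\omega,\vartheta)\|_{L^2}^{s-1/2}\|\nabla(n,\omega,\vartheta)\|_{L^2}^{3/2-s}$ in \eqref{equ4-2}, while the remaining derivative-heavy part of $S_i$ — in particular the second-order terms $h(n)\Delta\omega$, $h(n)\Delta\vartheta$ coming from $S_2,S_3$ — is absorbed into the factor $\|\nabla(n,\omega)\|_{L^2}+\|\nabla\vartheta\|_{L^2}+\|\nabla^2\omega\|_{L^2}+\|\Delta\vartheta\|_{L^2}$.

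The main obstacle is the bookkeeping of Hölder exponents in the range $s\in(\tfrac12,\tfrac32)$: one must choose, term by term, a factorization $\tfrac1p=\tfrac1a+\tfrac1b$ so that (i) the factor carrying the fewest derivatives can be interpolated between $L^2$ and $L^6$ (i.e. between $\dot H^0$ and $\dot H^1$) with the single universal pair of exponents $s-\tfrac12$ and $\tfrac32-s$, and (ii) the other factor is exactly one of the five quantities listed in the last-but-one line of \eqref{equ4-2}; terms like $\omega\cdot\nabla n$, $n\,\mathrm{div}\,\omega$, $\vartheta\,\mathrm{div}\,\omega$, $g(n)\vartheta\nabla n$, $g(n)(2\mu|D\omega|^2+\lambda(\mathrm{div}\,\omega)^2)$, $h(n)\Delta\omega$ and $h(n)\Delta\vartheta$ each need this checked separately, and one must verify that all the Gagliardo–Nirenberg exponents are admissible (lie in $[0,1]$) precisely because $s<\tfrac32$. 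Once a uniform choice is found the estimates are routine, and summing the finitely many contributions of $S_1,S_2,S_3$ gives \eqref{equ4-2}; the case $s\in[0,\tfrac12]$ is the easy specialization where no interpolation of the low factor is needed.
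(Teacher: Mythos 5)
Your proposal follows essentially the same route as the paper: apply $\Lambda^{-s}$, discard the dissipation, reduce to $\left\|\Lambda^{-s}S_i\right\|_{L^2}\leq C\left\|S_i\right\|_{L^p}$ with $\frac{1}{p}=\frac{1}{2}+\frac{s}{3}$ via Lemma \ref{lem2-4}, then split each nonlinearity by H\"older into an $L^{3/s}$ factor and an $L^2$ factor, using Gagliardo--Nirenberg between $\dot H^1$ and $\dot H^2$ when $s\in(0,\frac{1}{2}]$ (so $\frac{3}{s}\geq 6$) and between $L^2$ and $\dot H^1$ when $s\in(\frac{1}{2},\frac{3}{2})$ (so $2<\frac{3}{s}<6$), which is exactly the paper's term-by-term argument. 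The proposal is correct and no substantive difference from the paper's proof remains.
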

\begin{proof}
Applying $\Lambda^{-s}$ to $\eqref{equ3-3}_{1}$, $\eqref{equ3-3}_{2}$, $\eqref{equ3-3}_{3}$, multiplying the resulting equation by $\Lambda^{-s}n$, $\Lambda^{-s}\omega$, $\Lambda^{-s}\vartheta$, and integrating over $\mathbb{R}^3$, we have
\begin{equation}\label{equ4-3}
\begin{split}
&\frac{1}{2}\frac{d}{dt}\left(\left\|\Lambda^{-s}n\right\|^2_{L^2}+\left\|\Lambda^{-s}\omega\right\|^2_{L^2}
+\left\|\Lambda^{-s}\vartheta\right\|^2_{L^2}\right)\\
&+\mu\left\|\Lambda^{-s}\nabla \omega\right\|^2_{L^2}+(\mu+\lambda)\left\|\Lambda^{-s}{\rm div}\omega\right\|^2_{L^2}+\kappa\left\|\Lambda^{-s}\nabla\vartheta\right\|^2_{L^2}\\
=&-c\int_{\mathbb{R}^3}\Lambda^{-s}(n{\rm div}\omega)\cdot\Lambda^{-s}ndx
-c\int_{\mathbb{R}^3}\Lambda^{-s}(\omega\cdot\nabla n)\cdot\Lambda^{-s}ndx\\
&-c\int_{\mathbb{R}^3}\Lambda^{-s}(\omega\cdot\nabla \omega)\cdot\Lambda^{-s}\omega dx
-\int_{\mathbb{R}^3}\Lambda^{-s}\left[h(n)(\mu\Delta \omega+(\mu+\lambda)\nabla{\rm div}\omega)\right]\cdot\Lambda^{-s}\omega dx\\
&+c\int_{\mathbb{R}^3}\Lambda^{-s}(h(n)\nabla n)\cdot\Lambda^{-s}\omega dx
-\sigma\int_{\mathbb{R}^3}\Lambda^{-s}(g(n)\vartheta\nabla n)\cdot\Lambda^{-s}\omega dx\\
&-c\int_{\mathbb{R}^3}\Lambda^{-s}(\omega\cdot\nabla\vartheta)\cdot\Lambda^{-s}\vartheta dx
-\frac{\sigma^{2}}{c}\int_{\mathbb{R}^3}\Lambda^{-s}(\vartheta{\rm div}\omega)\cdot\Lambda^{-s}\vartheta dx\\
&+\frac{\sigma}{c}\int_{\mathbb{R}^3}\Lambda^{-s}\left[g(n)(2\mu|D \omega|^{2}+\lambda({\rm div}\omega)^{2})\right]\cdot\Lambda^{-s}\vartheta dx
-\kappa\int_{\mathbb{R}^3}\Lambda^{-s}(h(n)\Delta\vartheta)\cdot\Lambda^{-s}\vartheta dx\\
:=&\sum_{i=1}^{10}J_{i}.
\end{split}
\end{equation}

In order to estimate $J_{i}$ $(1\leq i\leq10)$, we require $s\in(0,\frac{3}{2})$. If $s\in(0,\frac{1}{2}]$, we obtain $\frac{1}{2}+\frac{s}{3}<1$, $\frac{3}{s}\geq6$, by using H\"{o}lder's inequality, Gagliardo-Nirenberg's inequality together with Lemma \ref{lem2-4}, it yields that
\begin{equation*}
\begin{split}
|J_{1}|
\leq& C\left\|\Lambda^{-s}n\right\|_{L^2}\left\|n{\rm div}\omega\right\|_{L^{\frac{1}{\frac{1}{2}+\frac{s}{3}}}}\\
\leq& C\left\|\Lambda^{-s}n\right\|_{L^2}\left\|n\right\|_{L^{\frac{3}{s}}}\left\|\nabla \omega\right\|_{L^2}\\
\leq& C\left\|\Lambda^{-s}n\right\|_{L^2}\left\|\nabla n\right\|^{\frac{1}{2}+s}_{L^{2}}
\left\|\nabla^{2} n\right\|^{\frac{1}{2}-s}_{L^{2}}\left\|\nabla \omega\right\|_{L^2},
\end{split}
\end{equation*}
\begin{equation*}
\begin{split}
|J_{2}|
\leq& C\left\|\Lambda^{-s}n\right\|_{L^2}\left\|\omega\cdot\nabla n\right\|_{L^{\frac{1}{\frac{1}{2}+\frac{s}{3}}}}\\
\leq& C\left\|\Lambda^{-s}n\right\|_{L^2}\left\|\omega\right\|_{L^{\frac{3}{s}}}\left\|\nabla n\right\|_{L^2}\\
\leq& C\left\|\Lambda^{-s}n\right\|_{L^2}\left\|\nabla \omega\right\|^{\frac{1}{2}+s}_{L^{2}}\left\|\nabla^{2} \omega\right\|^{\frac{1}{2}-s}_{L^{2}}\left\|\nabla n\right\|_{L^2},
\end{split}
\end{equation*}
\begin{equation*}
\begin{split}
|J_{3}|
\leq& C\left\|\Lambda^{-s}\omega\right\|_{L^2}\left\|\omega\cdot\nabla \omega\right\|_{L^{\frac{1}{\frac{1}{2}+\frac{s}{3}}}}\\
\leq& C\left\|\Lambda^{-s}\omega\right\|_{L^2}\left\|\omega\right\|_{L^{\frac{3}{s}}}\left\|\nabla \omega\right\|_{L^2}\\
\leq& C\left\|\Lambda^{-s}\omega\right\|_{L^2}\left\|\nabla \omega\right\|^{\frac{1}{2}+s}_{L^{2}}
\left\|\nabla^{2} \omega\right\|^{\frac{1}{2}-s}_{L^{2}}\left\|\nabla \omega\right\|_{L^2},
\end{split}
\end{equation*}
\begin{equation*}
\begin{split}
|J_{4}|
\leq& C\left\|\Lambda^{-s}\omega\right\|_{L^2}
\left\|h(n)(\Delta \omega+\nabla{\rm div}\omega)\right\|_{L^{\frac{1}{\frac{1}{2}+\frac{s}{3}}}}\\
\leq& C\left\|\Lambda^{-s}\omega\right\|_{L^2}\left\|h(n)\right\|_{L^{\frac{3}{s}}}\left\|\nabla^{2} \omega\right\|_{L^2}\\
\leq& C\left\|\Lambda^{-s}\omega\right\|_{L^2}\left\|\nabla n\right\|^{\frac{1}{2}+s}_{L^{2}}
\left\|\nabla^{2} n\right\|^{\frac{1}{2}-s}_{L^{2}}\left\|\nabla^{2} \omega\right\|_{L^2},
\end{split}
\end{equation*}
\begin{equation*}
\begin{split}
|J_{5}|
\leq& C\left\|\Lambda^{-s}\omega\right\|_{L^2}\left\|h(n)\nabla n\right\|_{L^{\frac{1}{\frac{1}{2}+\frac{s}{3}}}}\\
\leq& C\left\|\Lambda^{-s}\omega\right\|_{L^2}\left\|h(n)\right\|_{L^{\frac{3}{s}}}\left\|\nabla n\right\|_{L^2}\\
\leq& C\left\|\Lambda^{-s}\omega\right\|_{L^2}\left\|\nabla n\right\|^{\frac{1}{2}+s}_{L^{2}}
\left\|\nabla^{2} n\right\|^{\frac{1}{2}-s}_{L^{2}}\left\|\nabla n\right\|_{L^2},
\end{split}
\end{equation*}
\begin{equation*}
\begin{split}
|J_{6}|
\leq& C\left\|\Lambda^{-s}\omega\right\|_{L^2}\left\|g(n)\vartheta\nabla n\right\|_{L^{\frac{1}{\frac{1}{2}+\frac{s}{3}}}}\\
\leq& C\left\|\Lambda^{-s}\omega\right\|_{L^2}\left\|g(n)\right\|_{L^\infty}\left\|\vartheta\right\|_{L^{\frac{3}{s}}}\left\|\nabla n\right\|_{L^2}\\
\leq& C\left\|\Lambda^{-s}\omega\right\|_{L^2}\left\|\nabla \vartheta\right\|^{\frac{1}{2}+s}_{L^{2}}
\left\|\nabla^{2} \vartheta\right\|^{\frac{1}{2}-s}_{L^{2}}\left\|\nabla n\right\|_{L^2},
\end{split}
\end{equation*}
\begin{equation*}
\begin{split}
|J_{7}|
\leq& C\left\|\Lambda^{-s}\vartheta\right\|_{L^2}\left\|\omega\cdot\nabla\vartheta\right\|_{L^{\frac{1}{\frac{1}{2}+\frac{s}{3}}}}\\
\leq& C\left\|\Lambda^{-s}\vartheta\right\|_{L^2}\left\|\omega\right\|_{L^{\frac{3}{s}}}\left\|\nabla \vartheta\right\|_{L^2}\\
\leq& C\left\|\Lambda^{-s}\vartheta\right\|_{L^2}\left\|\nabla \omega\right\|^{\frac{1}{2}+s}_{L^{2}}\left\|\nabla^{2} \omega\right\|^{\frac{1}{2}-s}_{L^{2}}\left\|\nabla \vartheta\right\|_{L^2},
\end{split}
\end{equation*}
\begin{equation*}
\begin{split}
|J_{8}|
\leq& C\left\|\Lambda^{-s}\vartheta\right\|_{L^2}\left\|\vartheta{\rm div}\omega\right\|_{L^{\frac{1}{\frac{1}{2}+\frac{s}{3}}}}\\
\leq& C\left\|\Lambda^{-s}\vartheta\right\|_{L^2}\left\|\vartheta\right\|_{L^{\frac{3}{s}}}\left\|\nabla \omega\right\|_{L^2}\\
\leq& C\left\|\Lambda^{-s}\vartheta\right\|_{L^2}\left\|\nabla \vartheta\right\|^{\frac{1}{2}+s}_{L^{2}}\left\|\nabla^{2} \vartheta\right\|^{\frac{1}{2}-s}_{L^{2}}\left\|\nabla \omega\right\|_{L^2},
\end{split}
\end{equation*}
\begin{equation*}
\begin{split}
|J_{9}|
\leq& C\left\|\Lambda^{-s}\vartheta\right\|_{L^2}
\left\|g(n)(|D \omega|^{2}+({\rm div}\omega)^{2})\right\|_{L^{\frac{1}{\frac{1}{2}+\frac{s}{3}}}}\\
\leq& C\left\|\Lambda^{-s}\vartheta\right\|_{L^2}\left\|g(n)\right\|_{L^{\frac{3}{s}}}
\left\|\nabla \omega\right\|_{L^6}\left\|\nabla \omega\right\|_{L^3}\\
\leq& C\left\|\Lambda^{-s}\vartheta\right\|_{L^2}\left\|\nabla n\right\|^{\frac{1}{2}+s}_{L^{2}}
\left\|\nabla^{2} n\right\|^{\frac{1}{2}-s}_{L^{2}}\left\|\nabla^{2} \omega\right\|_{L^2},
\end{split}
\end{equation*}
\begin{equation*}
\begin{split}
|J_{10}|
\leq& C\left\|\Lambda^{-s}\vartheta\right\|_{L^2}\left\|h(n)\Delta\vartheta\right\|_{L^{\frac{1}{\frac{1}{2}+\frac{s}{3}}}}\\
\leq& C\left\|\Lambda^{-s}\vartheta\right\|_{L^2}\left\|h(n)\right\|_{L^{\frac{3}{s}}}\left\|\Delta\vartheta \right\|_{L^2}\\
\leq& C\left\|\Lambda^{-s}\vartheta\right\|_{L^2}\left\|\nabla n\right\|^{\frac{1}{2}+s}_{L^{2}}
\left\|\nabla^{2} n\right\|^{\frac{1}{2}-s}_{L^{2}}\left\|\Delta\vartheta\right\|_{L^2}.
\end{split}
\end{equation*}
Combining $J_{i}$ $(1\leq i\leq10)$ and using Young's inequality, the estimate \eqref{equ4-1} follows.

Next, if $s\in(\frac{1}{2},\frac{3}{2})$, it is easy to see that $\frac{1}{2}+\frac{s}{3}<1$, $2<\frac{3}{s}<6$. Then
\begin{equation*}
\begin{split}
|J_{1}|
\leq& C\left\|\Lambda^{-s}n\right\|_{L^2}\left\|n{\rm div}\omega\right\|_{L^{\frac{1}{\frac{1}{2}+\frac{s}{3}}}}\\
\leq& C\left\|\Lambda^{-s}n\right\|_{L^2}\left\|n\right\|_{L^{\frac{3}{s}}}\left\|\nabla \omega\right\|_{L^2}\\
\leq& C\left\|\Lambda^{-s}n\right\|_{L^2}\left\| n\right\|^{s-\frac{1}{2}}_{L^{2}}\left\|\nabla n\right\|^{\frac{3}{2}-s}_{L^{2}}
\left\|\nabla \omega\right\|_{L^2},\\
\end{split}
\end{equation*}
\begin{equation*}
\begin{split}
|J_{2}|
\leq& C\left\|\Lambda^{-s}n\right\|_{L^2}\left\|\omega\cdot\nabla n\right\|_{L^{\frac{1}{\frac{1}{2}+\frac{s}{3}}}}\\
\leq& C\left\|\Lambda^{-s}n\right\|_{L^2}\left\|\omega\right\|_{L^{\frac{3}{s}}}\left\|\nabla n\right\|_{L^2}\\
\leq& C\left\|\Lambda^{-s}n\right\|_{L^2}\left\| \omega\right\|^{s-\frac{1}{2}}_{L^{2}}\left\|\nabla \omega\right\|^{\frac{3}{2}-s}_{L^{2}}\left\|\nabla n\right\|_{L^2},\\
\end{split}
\end{equation*}
\begin{equation*}
\begin{split}
|J_{3}|
\leq& C\left\|\Lambda^{-s}\omega\right\|_{L^2}\left\|\omega\cdot\nabla \omega\right\|_{L^{\frac{1}{\frac{1}{2}+\frac{s}{3}}}}\\
\leq& C\left\|\Lambda^{-s}\omega\right\|_{L^2}\left\|\omega\right\|_{L^{\frac{3}{s}}}\left\|\nabla \omega\right\|_{L^2}\\
\leq& C\left\|\Lambda^{-s}\omega\right\|_{L^2}\left\| \omega\right\|^{s-\frac{1}{2}}_{L^{2}}\left\|\nabla \omega\right\|^{\frac{3}{2}-s}_{L^{2}}
\left\|\nabla \omega\right\|_{L^2},
\end{split}
\end{equation*}
\begin{equation*}
\begin{split}
|J_{4}|
\leq& C\left\|\Lambda^{-s}\omega\right\|_{L^2}\left\|h(n)(\Delta \omega+\nabla{\rm div}\omega)\right\|_{L^{\frac{1}{\frac{1}{2}+\frac{s}{3}}}}\\
\leq& C\left\|\Lambda^{-s}\omega\right\|_{L^2}\left\|h(n)\right\|_{L^{\frac{3}{s}}}\left\|\nabla^{2} \omega\right\|_{L^2}\\
\leq& C\left\|\Lambda^{-s}\omega\right\|_{L^2}\left\| n\right\|^{s-\frac{1}{2}}_{L^{2}}\left\|\nabla n\right\|^{\frac{3}{2}-s}_{L^{2}}
\left\|\nabla^{2} \omega\right\|_{L^2},
\end{split}
\end{equation*}
\begin{equation*}
\begin{split}
|J_{5}|
\leq& C\left\|\Lambda^{-s}\omega\right\|_{L^2}\left\|h(n)\nabla n\right\|_{L^{\frac{1}{\frac{1}{2}+\frac{s}{3}}}}\\
\leq& C\left\|\Lambda^{-s}\omega\right\|_{L^2}\left\|h(n)\right\|_{L^{\frac{3}{s}}}\left\|\nabla n\right\|_{L^2}\\
\leq& C\left\|\Lambda^{-s}\omega\right\|_{L^2}\left\| n\right\|^{s-\frac{1}{2}}_{L^{2}}
\left\|\nabla n\right\|^{\frac{3}{2}-s}_{L^{2}}\left\|\nabla n\right\|_{L^2},
\end{split}
\end{equation*}
\begin{equation*}
\begin{split}
|J_{6}|
\leq& C\left\|\Lambda^{-s}\omega\right\|_{L^2}\left\|g(n)\vartheta\nabla n\right\|_{L^{\frac{1}{\frac{1}{2}+\frac{s}{3}}}}\\
\leq& C\left\|\Lambda^{-s}\omega\right\|_{L^2}\left\|g(n)\right\|_{L^{\infty}}
\left\|\vartheta\right\|_{L^{\frac{3}{s}}}\left\|\nabla n\right\|_{L^2}\\
\leq& C\left\|\Lambda^{-s}\omega\right\|_{L^2}\left\| \vartheta\right\|^{s-\frac{1}{2}}_{L^{2}}\left\|\nabla\vartheta\right\|^{\frac{3}{2}-s}_{L^{2}}
\left\|\nabla n\right\|_{L^2},
\end{split}
\end{equation*}
\begin{equation*}
\begin{split}
|J_{7}|
\leq& C\left\|\Lambda^{-s}\vartheta\right\|_{L^2}\left\|\omega\cdot\nabla\vartheta\right\|_{L^{\frac{1}{\frac{1}{2}+\frac{s}{3}}}}\\
\leq& C\left\|\Lambda^{-s}\vartheta\right\|_{L^2}\left\|\omega\right\|_{L^{\frac{3}{s}}}\left\|\nabla \vartheta\right\|_{L^2}\\
\leq& C\left\|\Lambda^{-s}\vartheta\right\|_{L^2}\left\|\omega\right\|^{s-\frac{1}{2}}_{L^{2}}\left\|\nabla \omega\right\|^{\frac{3}{2}-s}_{L^{2}}\left\|\nabla \vartheta\right\|_{L^2},
\end{split}
\end{equation*}
\begin{equation*}
\begin{split}
|J_{8}|
\leq& C\left\|\Lambda^{-s}\vartheta\right\|_{L^2}\left\|\vartheta{\rm div}\omega\right\|_{L^{\frac{1}{\frac{1}{2}+\frac{s}{3}}}}\\
\leq& C\left\|\Lambda^{-s}\vartheta\right\|_{L^2}\left\|\vartheta\right\|_{L^{\frac{3}{s}}}\left\|\nabla \omega\right\|_{L^2}\\
\leq& C\left\|\Lambda^{-s}\vartheta\right\|_{L^2}\left\|\vartheta\right\|^{s-\frac{1}{2}}_{L^{2}}\left\|\nabla \vartheta\right\|^{\frac{3}{2}-s}_{L^{2}}\left\|\nabla \omega\right\|_{L^2},
\end{split}
\end{equation*}
\begin{equation*}
\begin{split}
|J_{9}|
\leq& C\left\|\Lambda^{-s}\vartheta\right\|_{L^2}\left\|g(n)( |D \omega|^{2}+({\rm div}\omega)^{2})\right\|_{L^{\frac{1}{\frac{1}{2}+\frac{s}{3}}}}\\
\leq& C\left\|\Lambda^{-s}\vartheta\right\|_{L^2}\left\|g(n)\right\|_{L^{\frac{3}{s}}}
\left\|\nabla \omega\right\|_{L^6}\left\|\nabla \omega\right\|_{L^3}\\
\leq& C\left\|\Lambda^{-s}\vartheta\right\|_{L^2}\left\| n\right\|^{s-\frac{1}{2}}_{L^{2}}
\left\|\nabla n\right\|^{\frac{3}{2}-s}_{L^{2}}\left\|\nabla^{2}\omega\right\|_{L^2},
\end{split}
\end{equation*}
\begin{equation*}
\begin{split}
|J_{10}|
\leq& C\left\|\Lambda^{-s}\vartheta\right\|_{L^2}\left\|h(n)\Delta\vartheta\right\|_{L^{\frac{1}{\frac{1}{2}+\frac{s}{3}}}}\\
\leq& C\left\|\Lambda^{-s}\vartheta\right\|_{L^2}\left\|h(n)\right\|_{L^{\frac{3}{s}}}\left\|\Delta\vartheta \right\|_{L^2}\\
\leq& C\left\|\Lambda^{-s}\vartheta\right\|_{L^2}\left\| n\right\|^{s-\frac{1}{2}}_{L^{2}}\left\|\nabla n\right\|^{\frac{3}{2}-s}_{L^{2}}
\left\|\Delta\vartheta\right\|_{L^2}.
\end{split}
\end{equation*}
Combining $J_{i}$ $(1\leq i\leq10)$, the estimate \eqref{equ4-2} follows.
\end{proof}

\noindent \textbf{Proof of Theorem \ref{the1-2}}. \noindent \textbf{Step 1}. We first prove the decay rate of solutions for $s\in[0,\frac{1}{2}]$.  Define
\begin{equation*}
\begin{split}
M_{-s}(t)=\left\|\Lambda^{-s}n\right\|^2_{L^2}+\left\|\Lambda^{-s}\omega\right\|^2_{L^2}+
\left\|\Lambda^{-s}\vartheta\right\|^2_{L^2}.
\end{split}
\end{equation*}
Then, integrating \eqref{equ4-1} in time, we have
\begin{equation}\label{equ4-4}
\begin{split}
M_{-s}(t)\leq &M_{-s}(0)+C\int_0^t\left(\left\|\nabla n\right\|^2_{H^1}+\left\|\nabla \omega\right\|^2_{H^1}+\left\|\nabla \vartheta\right\|^2_{H^1}\right)\sqrt{M_{-s}(\tau)}d\tau\\
\leq& C\Big(1+\sup_{0\leq\tau\leq t}\sqrt{M_{-s}(\tau)}\Big),
\end{split}
\end{equation}
which implies \eqref{equ1-6} for $s\in[0,\frac{1}{2}]$, that is
\begin{equation*}
\left\|\Lambda^{-s}n\right\|^2_{L^2}+\left\|\Lambda^{-s}\omega\right\|^2_{L^2}+
\left\|\Lambda^{-s}\vartheta\right\|^2_{L^2}\leq C.
\end{equation*}
Moreover, thanks to Lemma \ref{lem2-5}, we have
\begin{equation*}
\left\|\nabla^{l+1} f\right\|_{L^2}\geq C\left\|\Lambda^{-s}f\right\|^{-\frac{1}{l+s}}_{L^2}\left\|\nabla^{l} f\right\|^{1+\frac{1}{l+s}}_{L^2}.
\end{equation*}
Then, we get
\begin{equation*}
\left\|\nabla^{l+1} (n,\omega,\vartheta)\right\|_{L^2}\geq C\left\|\nabla^{l} (n,\omega,\vartheta)\right\|^{1+\frac{1}{l+s}}_{L^2}.
\end{equation*}
Also, due to $\left\|\nabla^{k} n\right\|^{2}_{L^{2}}\leq C$, we have
\begin{equation*}
\left(\left\|\nabla^{k} n\right\|^{2}_{L^{2}}\right)^{1+\frac{1}{l+s}}
=\left\|\nabla^{k} n\right\|^{2}_{L^{2}}\left\|\nabla^{k} n\right\|^{\frac{2}{l+s}}_{L^{2}}
\leq C\left\|\nabla^{k} n\right\|^{2}_{L^{2}}.
\end{equation*}
So, we obtain
\begin{equation*}
\begin{split}
&\left\|\nabla^{l+1} n\right\|^{2}_{H^{N-l-1}}+\left\|\nabla^{l+1} \omega\right\|^{2}_{H^{N-l}}
+\left\|\nabla^{l+1} \vartheta\right\|^{2}_{H^{N-l}}\\
\geq &\frac{1}{2}\left(\left\|\nabla^{l+1} n\right\|^{2}_{H^{N-l-1}}+\left\|\nabla^{l+1} \omega\right\|^{2}_{H^{N-l}}+\left\|\nabla^{l+1} \vartheta\right\|^{2}_{H^{N-l}}\right)+\frac{1}{2}\left\|\nabla^{l+1} n\right\|^{2}_{L^{N-l-1}}\\
\geq& C_{0}\left(\left\|\nabla^{l} n\right\|^{2}_{H^{N-l-1}}+\left\|\nabla^{l} \omega\right\|^{2}_{H^{N-l}}+
\left\|\nabla^{l} \vartheta\right\|^{2}_{H^{N-l}}\right)^{1+\frac{1}{l+s}}
+\frac{1}{2}\left\|\nabla^{N} n\right\|^{2}_{L^{2}}\\
\geq& C_{0}\left(\left\|\nabla^{l} n\right\|^{2}_{H^{N-l}}+\left\|\nabla^{l}\omega\right\|^{2}_{H^{N-l}}+
\left\|\nabla^{l} \vartheta\right\|^{2}_{H^{N-l}}\right)^{1+\frac{1}{l+s}},
\end{split}
\end{equation*}
where $C_{0}\leq\frac{1}{2}$ is a fixed positive constant. Thus,  by \eqref{equ3-12}, we have
\begin{equation*}
\frac{d}{dt}\Psi_{l}^{N}+C_{0}\left(\Psi_{l}^{N}\right)^{1+\frac{1}{l+s}}\leq 0,
\end{equation*}
this means that
\begin{equation*}
\Psi_{l}^{N}\leq C(t+1)^{-(l+s)},\quad l=0,1,\cdot\cdot\cdot,N-1.
\end{equation*}

\noindent \textbf{Step 2}. We next prove the decay rate of solutions for $s\in(\frac{1}{2},\frac{3}{2})$. The arguments for the case $s\in[0,\frac{1}{2}]$ can not be applied to this case. Notice that $(n_{0},\omega_{0},\vartheta_{0})\in \dot{H}^{-s}\cap L^{2}\subset\dot{H}^{-\frac{1}{2}}$ when $s\in(\frac{1}{2},\frac{3}{2})$, we can deduce from \eqref{equ1-7} with $s=\frac{1}{2}$, there holds
\begin{equation}\label{equ4-5}
\left\|\nabla^{l}n\right\|^2_{H^{N-l}}+\left\|\nabla^{l}\omega\right\|^2_{H^{N-l}}+
\left\|\nabla^{l}\vartheta\right\|^2_{H^{N-l}}\leq C(t+1)^{-\frac{1}{2}-l}.
\end{equation}
Therefore, we deduce from \eqref{equ4-2} that for $s\in(\frac{1}{2},\frac{3}{2})$,
\begin{equation}\label{equ4-6}
\begin{split}
M_{-s}(t)\leq &M_{-s}(0)+C\int_0^t(\left\|n\right\|_{L^2}+\left\| \omega\right\|_{L^2}+\left\|\vartheta\right\|_{L^2})^{s-\frac{1}{2}}\cdot\\
&(\left\|\nabla n\right\|_{L^2}+\left\|\nabla \omega\right\|_{L^2}+\left\| \nabla\vartheta\right\|_{L^2})^{\frac{3}{2}-s}\cdot(\left\|\nabla n\right\|_{L^2}+\left\|\nabla \omega\right\|_{L^2}\\&+\left\| \nabla\vartheta\right\|_{L^2}+\left\|\nabla^{2} \omega\right\|_{L^2}+\left\|\Delta\vartheta\right\|_{L^2})\sqrt{M_{-s}(\tau)}d\tau\\
\leq& C+C\int_0^t(\tau+1)^{-\frac{7}{4}+\frac{s}{2}}d\tau\cdot\sup_{0\leq\tau\leq t}\sqrt{M_{-s}(\tau)}\\
\leq& C\Big(1+\sup_{0\leq\tau\leq t}\sqrt{M_{-s}(\tau)}\Big),
\end{split}
\end{equation}
which implies
\begin{equation*}
\left\|\Lambda^{-s}n\right\|^2_{L^2}+\left\|\Lambda^{-s}\omega\right\|^2_{L^2}+
\left\|\Lambda^{-s}\vartheta\right\|^2_{L^2}\leq C,\quad s\in(\frac{1}{2},\frac{3}{2}).
\end{equation*}
Then, we may repeat the arguments leading to \eqref{equ1-7} for $s\in[0,\frac{1}{2}]$ to prove that they hold also for $s\in(\frac{1}{2},\frac{3}{2})$.

\noindent \textbf{Step 3}. We will establish the optimal decay of the $N-th$ order spatial derivative.
\begin{lemma}\label{lem4-2}
Under the assumption of Theorem \ref{the1-2}, it holds that
\begin{equation}\label{equ4-7}
\begin{split}
&(t+1)^{N+s-1+\epsilon_{0}}\left\|\nabla^{N-1}(n,\omega,\vartheta)\right\|^2_{H^{1}}
+\\&\int_0^t(\tau+1)^{N+s-1+\epsilon_{0}}\left(\left\|\nabla^{N}n\right\|^2_{L^{2}}
+\left\|\nabla^{N}\omega\right\|^2_{H^{1}}+\left\|\nabla^{N}\vartheta\right\|^2_{H^{1}}\right)d\tau\\
\leq& C(t+1)^{\epsilon_{0}}.
\end{split}
\end{equation}
\end{lemma}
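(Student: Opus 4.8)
The plan is to run a time-weighted energy argument built on the closed estimate already obtained in the proof of Theorem~\ref{the1-1}. Taking $l=N-1$ and $m=N$ in \eqref{equ3-12} (admissible since $N\geq 3$, so Lemmas~\ref{lem3-1}--\ref{lem3-3} all apply in this range) yields
\begin{equation*}
\frac{d}{dt}\Psi^{N}_{N-1}(t)+\Gamma^{N}_{N-1}(t)\leq 0,
\end{equation*}
where $\Psi^{N}_{N-1}(t)\simeq\|\nabla^{N-1}(n,\omega,\vartheta)\|^2_{H^1}$ and $\Gamma^{N}_{N-1}(t)\simeq\|\nabla^{N}n\|^2_{L^2}+\|\nabla^{N}\omega\|^2_{H^1}+\|\nabla^{N}\vartheta\|^2_{H^1}$. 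Multiplying by the weight $(t+1)^{N+s-1+\epsilon_{0}}$, with $\epsilon_{0}>0$ a fixed small constant, we rewrite this as
\begin{equation*}
\frac{d}{dt}\Big[(t+1)^{N+s-1+\epsilon_{0}}\Psi^{N}_{N-1}\Big]+(t+1)^{N+s-1+\epsilon_{0}}\Gamma^{N}_{N-1}
\leq (N+s-1+\epsilon_{0})(t+1)^{N+s-2+\epsilon_{0}}\Psi^{N}_{N-1}.
\end{equation*}

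The delicate point is that $\Psi^{N}_{N-1}$ contains $\|\nabla^{N-1}n\|^2_{L^2}$, which is \emph{not} dominated by $\Gamma^{N}_{N-1}$ (the density dissipation only reaches order $N$), so the right-hand side cannot be absorbed into the dissipation by a Gr\"onwall-type step. Instead I would invoke the polynomial decay already established in Steps~1--2 of the proof of Theorem~\ref{the1-2}, namely $\Psi^{N}_{N-1}(t)\leq C(t+1)^{-(N-1+s)}$ for every $s\in[0,\tfrac{3}{2})$ (this is \eqref{equ1-7} with $l=N-1$). Substituting it bounds the right-hand side above by $C(t+1)^{N+s-2+\epsilon_{0}}(t+1)^{-(N-1+s)}=C(t+1)^{-1+\epsilon_{0}}$.

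Integrating over $[0,t]$, using $\int_0^t(\tau+1)^{-1+\epsilon_{0}}\,d\tau\leq\epsilon_{0}^{-1}(t+1)^{\epsilon_{0}}$ and the finiteness of $\Psi^{N}_{N-1}(0)$ (controlled by $\|(\rho_{0}-\rho_{\ast},u_{0},\theta_{0}-\theta_{\ast})\|^2_{H^N}$), we obtain
\begin{equation*}
(t+1)^{N+s-1+\epsilon_{0}}\Psi^{N}_{N-1}(t)+\int_0^t(\tau+1)^{N+s-1+\epsilon_{0}}\Gamma^{N}_{N-1}(\tau)\,d\tau\leq C(t+1)^{\epsilon_{0}},
\end{equation*}
which is precisely \eqref{equ4-7} after $\Psi^{N}_{N-1}$ and $\Gamma^{N}_{N-1}$ are replaced by the equivalent norms. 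The main obstacle is thus not a new estimate but the structural mismatch between $\Psi$ and $\Gamma$ in the density variable, which is circumvented by feeding in the already-proven decay of the $(N-1)$-st derivatives rather than attempting a self-contained closure; the restriction $\epsilon_{0}>0$ is exactly what makes $(\tau+1)^{-1+\epsilon_{0}}$ integrable with the claimed growth.
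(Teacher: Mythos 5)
Your proposal is correct and follows essentially the same route as the paper: take $l=N-1$, $m=N$ in \eqref{equ3-12}, multiply by the weight $(t+1)^{N+s-1+\epsilon_{0}}$, bound the resulting right-hand side by $C(t+1)^{-1+\epsilon_{0}}$ using the already-established decay \eqref{equ1-7} with $l=N-1$, and integrate in time. Your explicit remark on why the $\|\nabla^{N-1}n\|_{L^2}^2$ component of $\Psi^{N}_{N-1}$ forces one to feed in the proven decay rather than close a Gr\"onwall argument is a correct and useful clarification, but the argument itself is the paper's.
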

\begin{proof}
Multiplying the inequality \eqref{equ3-12} by $(t+1)^{N+s-1+\epsilon_{0}}$, taking $l=N-1$, $m=N$, and combining the decay estimate \eqref{equ1-7},
it holds true
\begin{equation*}
\begin{split}
&\frac{d}{dt}\left[(t+1)^{N+s-1+\epsilon_{0}}\Psi_{N-1}^{N}(t)\right]+(t+1)^{N+s-1+\epsilon_{0}}\Gamma_{N-1}^{N}(t)\\
\leq& C(t+1)^{N+s-2+\epsilon_{0}}\Psi_{N-1}^{N}(t)\\
\leq& C(t+1)^{N+s-2+\epsilon_{0}}\left\|\nabla^{N-1}(n,\omega,\vartheta)\right\|^2_{H^{1}}\\
\leq& C(t+1)^{-1+\epsilon_{0}}.
\end{split}
\end{equation*}
Integrating the above inequality in time, the estimate \eqref{equ4-7} yields.
\end{proof}

Next, by H\"{o}lder's inequality and Kato-Ponce's inequality, we can deduce
\begin{equation*}
\begin{split}
|I_{1}|\leq& C\left\|\nabla^{N}n\right\|_{L^2}\left\|\nabla^{N}(n{\rm div}\omega)\right\|_{L^2}\\
\leq& C\left\|\nabla^{N}n\right\|_{L^2}\left(\left\|\nabla^{N}n\right\|_{L^2}\left\|{\rm div} \omega\right\|_{L^\infty}+
\left\|n\right\|_{L^\infty}\left\|\nabla^{N}{\rm div}\omega\right\|_{L^2}\right)\\
\leq& C\left\|\nabla^{2}\omega\right\|^{2}_{H^1}\left\|\nabla^{N}n\right\|^{2}_{L^2}
+C\left\|\nabla n\right\|^{2}_{H^1}\left\|\nabla^{N}n\right\|^{2}_{L^2}
+\epsilon\left\|\nabla^{N+1}\omega\right\|_{L^2}\\
\leq&C(t+1)^{-(1+s)}\left\|\nabla^{N}n\right\|^{2}_{L^2}+\epsilon\left\|\nabla^{N+1}\omega\right\|_{L^2}.
\end{split}
\end{equation*}

For the term $I_{2}$ and $I_{3}$, integrating by parts, we find that
\begin{equation*}
\begin{split}
|I_{2}|\leq& C\left|\int_{\mathbb{R}^3}{\rm div}\omega|\nabla^{N}n|^{2}dx\right|+
C\left|\int_{\mathbb{R}^3}\nabla^{N-1}(\nabla\omega\cdot\nabla n)\cdot\nabla^{N}ndx\right|\\
\leq& C\left\|\nabla^{N}n\right\|_{L^2}\left(\left\|\nabla\omega\right\|_{L^\infty}\left\|\nabla^{N}n\right\|_{L^2}+
\left\|\nabla n\right\|_{L^\infty}\left\|\nabla^{N}\omega\right\|_{L^2}\right)\\
\leq& C\left\|\nabla^{N}n\right\|_{L^2}\left(\left\|\nabla^{2}\omega\right\|_{H^1}\left\|\nabla^{N}n\right\|_{L^2}
+\left\|\nabla^{2} n\right\|_{H^1}\left\|\nabla^{N}\omega\right\|_{L^2}\right)\\
\leq&C(t+1)^{-(1+\frac{s}{2})}\left(\left\|\nabla^{N}n\right\|^{2}_{L^2}+\left\|\nabla^{N}\omega\right\|^{2}_{L^2}\right),
\end{split}
\end{equation*}
and
\begin{equation*}
\begin{split}
|I_{3}|\leq& C\left|\int_{\mathbb{R}^3}\nabla^{N-1}(\omega\cdot\nabla\omega)\cdot{\rm div}\nabla^{N}\omega dx\right|\\
\leq& C\left\|\nabla^{N+1}\omega\right\|_{L^2}\left(\left\|\omega\right\|_{L^\infty}\left\|\nabla^{N}\omega\right\|_{L^2}+
\left\|\nabla \omega\right\|_{L^\infty}\left\|\nabla^{N-1}\omega\right\|_{L^2}\right)\\
\leq& C\left\|\nabla^{N+1}\omega\right\|_{L^2}\left(\left\|\nabla\omega\right\|_{H^1}\left\|\nabla^{N}\omega\right\|_{L^2}
+\left\|\nabla^{2} \omega\right\|_{H^1}\left\|\nabla^{N-1}\omega\right\|_{L^2}\right)\\
\leq&C(t+1)^{-(1+s)}\left\|\nabla^{N}\omega\right\|^{2}_{L^2}
+C(t+1)^{-(N+1+2s)}+\epsilon\left\|\nabla^{N+1}\omega\right\|_{L^2}.
\end{split}
\end{equation*}

Given the H\"{o}lder's inequality, Gagliardo-Nirenberg's inequality, we calculate
\begin{equation*}
\begin{split}
|I_{4}|=&\left|\int_{\mathbb{R}^3}\nabla^{N-1}\left[h(n)(\mu\Delta \omega+(\mu+\lambda)\nabla{\rm div}\omega)\right]
\cdot{\rm div}\nabla^{N}\omega dx\right|\\
\leq &C\left\|\nabla^{N+1}\omega\right\|_{L^2}\left\|\nabla^{N-1}(h(n)\nabla^{2}\omega)\right\|_{L^{2}}\\
\leq& C\left\|\nabla^{N+1}\omega\right\|_{L^2}\left(\left\|\nabla^{N-1}n\right\|_{L^2}\left\|\nabla^{2} \omega\right\|_{L^\infty}+
\left\|n\right\|_{L^{\infty}}\left\|\nabla^{N+1}\omega\right\|_{L^2}\right)\\
\leq& C\left\|\nabla^{N+1}\omega\right\|_{L^2}\left(\left\|\nabla^{N-1}n\right\|_{L^2}
\left\|\nabla^{3} \omega\right\|^{\frac{1}{2}}_{L^2}\left\|\nabla^{4} \omega\right\|^{\frac{1}{2}}_{L^2}+
\left\|\nabla n\right\|_{H^{1}}\left\|\nabla^{N+1}\omega\right\|_{L^2}\right)\\
\leq&C(t+1)^{-\frac{N-1+s}{2}}\left\|\nabla^{3} \omega\right\|^{\frac{1}{2}}_{L^2}
\left\|\nabla^{4} \omega\right\|^{\frac{1}{2}}_{L^2}\left\|\nabla^{N+1}\omega\right\|_{L^2}
+C\eta_{1}\left\|\nabla^{N+1}\omega\right\|^{2}_{L^2}.
\end{split}
\end{equation*}
In the case of $N=3$, it follows that
\begin{equation*}
\begin{split}
|I_{4}|\leq& C(t+1)^{-\frac{2+s}{2}}\left\|\nabla^{3} \omega\right\|^{\frac{1}{2}}_{L^2}
\left\|\nabla^{4} \omega\right\|^{\frac{3}{2}}_{L^2}+C\eta_{1}\left\|\nabla^{4}\omega\right\|^{2}_{L^2}\\
\leq&C(t+1)^{-2(2+s)}\left\|\nabla^{3} \omega\right\|^{2}_{L^2}+(C\eta_{1}+\epsilon)\left\|\nabla^{4}\omega\right\|^{2}_{L^2}.
\end{split}
\end{equation*}
In the case of $N\geq4$, by applying the decay \eqref{equ1-7}, it holds that
\begin{equation*}
\begin{split}
|I_{4}|\leq C(t+1)^{-(N+2+2s)}+(C\eta_{1}+\epsilon)\left\|\nabla^{N+1}\omega\right\|^{2}_{L^2}.
\end{split}
\end{equation*}

For the term $I_{5}$ and $I_{6}$, invoking H\"{o}lder's inequality and Lemma \ref{lem2-3}, we derive
\begin{equation*}
\begin{split}
|I_{5}|\leq&C\left|\int_{\mathbb{R}^3}\nabla^{N-1}(h(n)\nabla n)\cdot{\rm div}\nabla^{N}\omega dx\right|\\
\leq &C\left\|\nabla^{N+1}\omega\right\|_{L^2}\left\|\nabla^{N-1}(h(n)\nabla n)\right\|_{L^{2}}\\
\leq& C\left\|\nabla^{N+1}\omega\right\|_{L^2}\left(\left\|\nabla^{N-1}n\right\|_{L^2}\left\|\nabla n\right\|_{L^\infty}+
\left\|n\right\|_{L^\infty}\left\|\nabla^{N}n\right\|_{L^2}\right)\\
\leq& C\left(\left\|\nabla^{N-1}n\right\|^{2}_{L^2}\left\|\nabla^{2} n\right\|^{2}_{H^1}+
\left\|\nabla n\right\|^{2}_{H^1}\left\|\nabla^{N}n\right\|^{2}_{L^2}\right)
+\epsilon\left\|\nabla^{N+1}\omega\right\|^{2}_{L^2}\\
\leq&C(t+1)^{-(N+1+2s)}+C(t+1)^{-(1+s)}\left\|\nabla^{N}n\right\|^{2}_{L^2}+\epsilon\left\|\nabla^{N+1}\omega\right\|^{2}_{L^2},
\end{split}
\end{equation*}
and
\begin{equation*}
\begin{split}
|I_{6}|\leq&C\left|\int_{\mathbb{R}^3}\nabla^{N-1}(g(n)\vartheta\nabla n)\cdot{\rm div}\nabla^{N}\omega dx\right|\\
\leq &C\left\|\nabla^{N+1}\omega\right\|_{L^2}\left\|\nabla^{N-1}(g(n)\vartheta\nabla n)\right\|_{L^{2}}\\
\leq& C\left\|\nabla^{N+1}\omega\right\|_{L^2}\Big[\left\|\nabla^{N-1}g(n)\right\|_{L^2}
\left\|\vartheta\right\|_{L^{\infty}}\left\|\nabla n\right\|_{L^{\infty}}+\\&
\left\|g(n)\right\|_{L^\infty}\left(\left\|\vartheta\right\|_{L^\infty}\left\|\nabla^{k}n\right\|_{L^2}+
\left\|\nabla^{N-1}\vartheta\right\|_{L^2}\left\|\nabla n\right\|_{L^\infty}\right)\Big]\\
\leq& C\Big(\left\|\nabla^{N-1}n\right\|^{2}_{L^2}\left\|\nabla \vartheta\right\|^{2}_{H^1}\left\|\nabla^{2} n\right\|^{2}_{H^1}
+\left\|\nabla\vartheta\right\|^{2}_{H^1}\left\|\nabla^{N}n\right\|^{2}_{L^2}\\
&+\left\|\nabla^{N-1}\vartheta\right\|^{2}_{L^2}\left\|\nabla^{2}n\right\|^{2}_{H^1}\Big)
+\epsilon\left\|\nabla^{N+1}\omega\right\|^{2}_{L^2}\\
\leq&C(t+1)^{-(N+1+2s)}+C(t+1)^{-(1+s)}\left\|\nabla^{N}n\right\|^{2}_{L^2}+\epsilon\left\|\nabla^{N+1}\omega\right\|^{2}_{L^2}.
\end{split}
\end{equation*}

Integrating by parts, then employing once again H\"{o}lder's inequality and Kato-Ponce's inequality, we estimate $I_{7}$ and $I_{8}$ as
\begin{equation*}
\begin{split}
|I_{7}|\leq&C\left|\int_{\mathbb{R}^3}\nabla^{N-1}(\omega\cdot\nabla\vartheta)\cdot{\rm div}\nabla^{N}\vartheta dx\right|\\
\leq &C\left\|\nabla^{N+1}\vartheta\right\|_{L^2}\left\|\nabla^{N-1}(\omega\cdot\nabla\vartheta)\right\|_{L^{2}}\\
\leq& C\left\|\nabla^{N+1}\vartheta\right\|_{L^2}\left(\left\|\nabla^{N-1}\omega\right\|_{L^2}
\left\|\nabla\vartheta\right\|_{L^{\infty}}+\left\|\omega\right\|_{L^\infty}\left\|\nabla^{N}\vartheta\right\|_{L^2}\right)\\
\leq&C \left\|\nabla^{N+1}\vartheta\right\|_{L^2}\left(\left\|\nabla^{N-1}\omega\right\|_{L^2}
\left\|\nabla^{2}\vartheta\right\|_{H^{1}}+\left\|\nabla\omega\right\|_{H^1}\left\|\nabla^{N}\vartheta\right\|_{L^2}\right)\\
\leq&C(t+1)^{-(N+1+2s)}+C(t+1)^{-(1+s)}\left\|\nabla^{N}\vartheta\right\|^{2}_{L^2}+\epsilon\left\|\nabla^{N+1}\vartheta\right\|^{2}_{L^2},
\end{split}
\end{equation*}
and
\begin{equation*}
\begin{split}
|I_{8}|\leq&C\left|\int_{\mathbb{R}^3}\nabla^{N-1}(\vartheta{\rm div}\omega)\cdot{\rm div}\nabla^{N}\vartheta dx\right|\\
\leq &C\left\|\nabla^{N+1}\vartheta\right\|_{L^2}\left\|\nabla^{N-1}(\vartheta{\rm div}\omega)\right\|_{L^{2}}\\
\leq& C\left\|\nabla^{N+1}\vartheta\right\|_{L^2}\left(\left\|\nabla^{N-1}\vartheta\right\|_{L^2}
\left\|\nabla\omega\right\|_{L^{\infty}}+\left\|\vartheta\right\|_{L^\infty}\left\|\nabla^{N}\omega\right\|_{L^2}\right)\\
\leq&C \left\|\nabla^{N+1}\vartheta\right\|_{L^2}\left(\left\|\nabla^{N-1}\vartheta\right\|_{L^2}
\left\|\nabla^{2}\omega\right\|_{H^{1}}+\left\|\nabla\vartheta\right\|_{H^1}\left\|\nabla^{N}\omega\right\|_{L^2}\right)\\
\leq&C(t+1)^{-(N+1+2s)}+C(t+1)^{-(1+s)}\left\|\nabla^{N}\omega\right\|^{2}_{L^2}+\epsilon\left\|\nabla^{N+1}\vartheta\right\|^{2}_{L^2}.
\end{split}
\end{equation*}

Similarly, by virtue of H\"{o}lder's inequality, Kato-Ponce's inequality and Lemma \ref{lem2-3}, it holds
\begin{equation*}
\begin{split}
|I_{9}|\leq&C\left|\int_{\mathbb{R}^3}\nabla^{N-1}(g(n)|\nabla\omega|^{2})\cdot{\rm div}\nabla^{N}\vartheta dx\right|\\
\leq &C\left\|\nabla^{N+1}\vartheta\right\|_{L^2}\left\|\nabla^{N-1}(g(n)|\nabla\omega|^{2})\right\|_{L^{2}}\\
\leq& C\left\|\nabla^{N+1}\vartheta\right\|_{L^2}\left(\left\|\nabla^{N-1}n\right\|_{L^2}
\left\|\nabla\omega\right\|^{2}_{L^{\infty}}+\left\|\nabla\omega\right\|_{L^\infty}
\left\|\nabla^{N}\omega\right\|_{L^2}\right)\\
\leq&C \left\|\nabla^{N+1}\vartheta\right\|_{L^2}\left(\left\|\nabla^{N-1}n\right\|_{L^2}
\left\|\nabla^{2}\omega\right\|^{2}_{H^{1}}+
\left\|\nabla^{2}\omega\right\|_{H^{1}}\left\|\nabla^{N}\omega\right\|_{L^2}\right)\\
\leq&C(t+1)^{-(N+3+3s)}+C(t+1)^{-(2+s)}\left\|\nabla^{N}\omega\right\|^{2}_{L^2}+\epsilon\left\|\nabla^{N+1}\vartheta\right\|^{2}_{L^2},
\end{split}
\end{equation*}
and
\begin{equation*}
\begin{split}
|I_{10}|\leq&C\left|\int_{\mathbb{R}^3}\nabla^{N-1}(h(n)\Delta\vartheta)\cdot{\rm div}\nabla^{N}\vartheta dx\right|\\
\leq& C\left\|\nabla^{N+1}\vartheta\right\|_{L^2}\left\|\nabla^{N-1}(h(n)\Delta\vartheta)\right\|_{L^{2}}\\
\leq& C\left\|\nabla^{N+1}\vartheta\right\|_{L^2}\left(\left\|\nabla^{N-1}n\right\|_{L^2}\left\|\Delta\vartheta\right\|_{L^\infty}+
\left\|n\right\|_{L^{\infty}}\left\|\nabla^{N+1}\vartheta\right\|_{L^2}\right)\\
\leq& C\left\|\nabla^{N+1}\vartheta\right\|_{L^2}\left(\left\|\nabla^{N-1}n\right\|_{L^2}\left\|\nabla^{3}\vartheta\right\|_{H^1}+
\left\|\nabla n\right\|_{H^{1}}\left\|\nabla^{N+1}\vartheta\right\|_{L^2}\right)\\
\leq&C(t+1)^{-(N+2+2s)}+(C\eta_{1}+\epsilon)\left\|\nabla^{N+1}\vartheta\right\|^{2}_{L^2}.
\end{split}
\end{equation*}

Summing up the estimates $I_{i}$ $(1\leq i\leq10)$, and choosing $\eta_{1}$, $\epsilon$ suitably small, we obtain that
\begin{equation}\label{equ4-8}
\begin{split}
&\frac{d}{dt}\left\|\nabla^{N}(n,\omega,\vartheta)\right\|^2_{L^2}+\left\|\nabla^{N+1}(\omega,\vartheta)\right\|^2_{L^2}\\
\leq&C(t+1)^{-1}\left\|\nabla^{N}(n,\omega,\vartheta)\right\|^2_{L^2}+C(t+1)^{-(N+1+s)}.
\end{split}
\end{equation}

By multiplying \eqref{equ4-8} by $(t+1)^{N+s+\epsilon_{0}}$, applying \eqref{equ4-7}, and integrating the result equation in time yields
\begin{equation}\label{equ4-9}
\begin{split}
&(t+1)^{N+s+\epsilon_{0}}\left\|\nabla^{N}(n,\omega,\vartheta)\right\|^2_{L^2}
+\int_0^t(\tau+1)^{N+s+\epsilon_{0}}\left\|\nabla^{N+1}(\omega,\vartheta)\right\|^2_{L^2}d\tau\\
\leq&\left\|\nabla^{N}(n_{0},\omega_{0},\vartheta_{0})\right\|^2_{L^2}
+C\int_0^t(\tau+1)^{N+s-1+\epsilon_{0}}\left\|\nabla^{N}(n,\omega,\vartheta)\right\|^2_{L^2}d\tau
+C\int_0^t(\tau+1)^{-1+\epsilon_{0}}d\tau\\
\leq&\left\|\nabla^{N}(n_{0},\omega_{0},\vartheta_{0})\right\|^2_{L^2}+C(t+1)^{\epsilon_{0}},
\end{split}
\end{equation}
it means that
\begin{equation}\label{equ4-10}
\left\|\nabla^{N}(n,\omega,\vartheta)\right\|^2_{L^{2}}\leq C(t+1)^{-(N+s)}.
\end{equation}
Therefore, the proof of Theorem \ref{the1-2} is complete.   \hfill $\Box$

\section{The proof of Theorem \ref{the1-3}}\label{sec5}
In this section, we will establish the upper and lower bound of the decay rate for the global solution when the initial data $(\rho_{0}-\rho_{\ast},u_{0},\theta_{0}-\theta_{\ast})$ belongs to $H^{N}\cap L^{1}$. To achieve this goal, we need to analyze the linearized system \eqref{equ5-1}.
\begin{equation}\label{equ5-1}
\left\{
  \begin{array}{ll}
    \partial_{t}\tilde{n}+c{\rm div}\tilde{\omega}=0,\\
    \partial_{t}\tilde{\omega}-\mu\Delta \tilde{\omega}-(\mu+\lambda)\nabla{\rm div}\tilde{\omega}+c\nabla \tilde{n}+\sigma\nabla\tilde{\vartheta}=0,\\
    \partial_{t}\tilde{\vartheta}-\kappa\Delta\tilde{\vartheta}+\sigma{\rm div}\tilde{\omega}=0.
     \end{array}
\right.
\end{equation}
We can rewrite the linear system \eqref{equ5-1} as
\begin{equation}\label{equ5-2}
\partial_{t}\tilde{U}=L\tilde{U},\quad \tilde{U}|_{t=0}=U_{0},\quad t\geq0,
\end{equation}
where $U_{0}=(n_{0},\omega_{0},\vartheta_{0})^{tr}$ and the linear operator $L$ is given by
\begin{equation*}
\begin{pmatrix}
0 & -c{\rm div} & 0 \\
-c\nabla & \mu\Delta+(\mu+\lambda)\nabla{\rm div} & -\sigma\nabla \\
0 & -\sigma{\rm div} & \kappa\Delta \\
\end{pmatrix}
\end{equation*}
In addition, the system \eqref{equ3-3} can be rewritten as follows
\begin{equation}\label{equ5-3}
\left\{
  \begin{array}{ll}
    \partial_{t}U=LU+F,\\
     U|_{t=0}=U_{0},
     \end{array}
\right.
\end{equation}
where $F=(S_{1},S_{2},S_{3})^{tr}$. Denote $G(x,t)$ be the Green's function of \eqref{equ5-2}, due to Duhamel's principle, it gives
\begin{equation}\label{equ5-4}
U(x,t)=G\ast U_{0}+\int_0^tG(t-\tau)\ast F(\tau)d\tau.
\end{equation}

Then we introduce the low-high frequency decomposition as follows
\begin{equation}\label{equ5-5}
U(x,t)=U^{l}(x,t)+U^{h}(x,t)=(n^{l},\omega^{l},\vartheta^{l})+(n^{h},\omega^{h},\vartheta^{h}),
\end{equation}
where
\begin{equation*}
U^{l}(x,t)=P_{1}U(x,t),\quad\quad U^{h}(x,t)=P_{\infty}(x,t),
\end{equation*}
and the operator $P_{1}$ and $P_{\infty}$ are introduced in Section \ref{sec1}. It is easy to check that for $0\leq k< m$,
\begin{equation}\label{equ5-6}
\left\|\nabla^{m}U^{l}\right\|_{L^2}\leq C\left\|\nabla^{m}U\right\|_{L^2},\quad\quad
\left\|\nabla^{m}U^{h}\right\|_{L^2}\leq C\left\|\nabla^{m}U\right\|_{L^2},
\end{equation}
\begin{equation}\label{equ5-7}
\left\|\nabla^{m}U^{l}\right\|_{L^2}\leq C\left\|\nabla^{k}U^{l}\right\|_{L^2},\quad\quad
\left\|\nabla^{k}U^{h}\right\|_{L^2}\leq C\left\|\nabla^{m}U^{h}\right\|_{L^2}.
\end{equation}

Next, we define the time-weighted energy function
\begin{equation}\label{equ5-8}
M(t)=\sup_{0\leq\tau\leq t}\left\{(\tau+1)^{\frac{3}{4}}\left\|(n,\omega,\vartheta)\right\|_{H^N}\right\},
\end{equation}
and the following lemmas, which have advantages in establishing the upper and lower bounds of the decay rate.

\begin{lemma} [\label{lem5-1}\cite{CLT}]
For given function $f(x,t)$, we have the following decay estimate
\begin{equation}\label{equ5-9}
\left\|\nabla^{k}G\ast f\right\|_{L^2}\leq C(t+1)^{-\frac{3}{4}-\frac{k}{2}}\left\|f\right\|_{L^1}+
Ce^{-Rt}\left\|\nabla^{k}f\right\|_{L^2}.
\end{equation}
In particular, for $G^{l}(x,t)$, it holds
\begin{equation}\label{equ5-10}
\left\|\nabla^{k}G^{l}\ast f\right\|_{L^2}\leq C(t+1)^{-\frac{3}{4}-\frac{k}{2}}\left\|f\right\|_{L^1}.
\end{equation}
\end{lemma}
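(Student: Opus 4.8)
The plan is to work on the Fourier side and reduce \eqref{equ5-2} to a family of linear ODEs in $t$, one for each frequency $\xi$. Taking the Fourier transform of \eqref{equ5-1} gives $\partial_{t}\hat{U}(\xi,t)=A(\xi)\hat{U}(\xi,t)$, where the symbol matrix is
\[
A(\xi)=
\begin{pmatrix}
0 & -ic\,\xi^{tr} & 0\\
-ic\,\xi & -\mu|\xi|^{2}I-(\mu+\lambda)\xi\xi^{tr} & -i\sigma\,\xi\\
0 & -i\sigma\,\xi^{tr} & -\kappa|\xi|^{2}
\end{pmatrix},
\]
so that $\hat{G}(\xi,t)=e^{tA(\xi)}$. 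It is convenient to split $\hat{\omega}$ into the part parallel to $\xi$ and the part orthogonal to $\xi$: the orthogonal (divergence-free) piece decouples and solves the heat equation $\partial_{t}\widehat{\omega^{\perp}}=-\mu|\xi|^{2}\widehat{\omega^{\perp}}$, while $(\hat{n},\hat{\omega}^{\parallel},\hat{\vartheta})$ satisfies a closed $3\times3$ system whose symbol $B(r)$ depends only on $r=|\xi|$.

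The core of the argument is the spectral analysis of $B(r)$. For $r$ small the three eigenvalues are regular perturbations of $\{0,\pm i\sqrt{c^{2}+\sigma^{2}}\,r\}$ and admit expansions
\[
\lambda_{\pm}(r)=-\nu_{1}r^{2}\pm i\sqrt{c^{2}+\sigma^{2}}\,r+O(r^{3}),\qquad
\lambda_{0}(r)=-\nu_{2}r^{2}+O(r^{3}),
\]
with $\nu_{1},\nu_{2}>0$ determined by $\mu,\lambda,\kappa,c,\sigma$; in particular, after possibly shrinking $r_{0}$, $\mathrm{Re}\,\lambda_{j}(r)\le -c_{1}r^{2}$ for $0<r\le r_{0}$. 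For $r\ge r_{0}$ one checks that two eigenvalues tend to $-\infty$ like $-(2\mu+\lambda)r^{2}$ and $-\kappa r^{2}$, while the third converges to the negative constant $-c^{2}/(2\mu+\lambda)$; together with continuity on the compact intermediate range and the incompressible mode $-\mu r^{2}$, this gives $\mathrm{Re}\,\lambda_{j}(r)\le -\delta<0$ for all $r\ge r_{0}$. Estimating $e^{tB(r)}$ through the spectral projections (uniformly bounded away from the finitely many possible eigenvalue crossings, near which one uses the resolvent representation) then yields the pointwise bound
\[
|\hat{G}(\xi,t)|\le C\,e^{-c_{2}\theta(|\xi|)t},\qquad \theta(r)=\min\{\,r^{2},\,r_{0}^{2}\,\}.
\]
This is the step requiring the most care: $A(\xi)$ is not self-adjoint, so one must verify that the eigenvalue branches stay separated (or handle the crossings) and that the diagonalizing change of basis stays bounded.

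Granting this pointwise bound, \eqref{equ5-9} follows from Plancherel's theorem. Writing
\[
\|\nabla^{k}G\ast f\|_{L^{2}}^{2}\le C\int_{\mathbb{R}^{3}}|\xi|^{2k}\,|\hat{G}(\xi,t)|^{2}\,|\hat{f}(\xi)|^{2}\,d\xi,
\]
I split the integral at $|\xi|=r_{0}$. On $|\xi|\le r_{0}$ I use $|\hat{f}(\xi)|\le\|f\|_{L^{1}}$ together with the Gaussian estimate
\[
\int_{|\xi|\le r_{0}}|\xi|^{2k}e^{-2c_{2}|\xi|^{2}t}\,d\xi\le C(t+1)^{-\frac{3}{2}-k},
\]
so this region contributes $C(t+1)^{-\frac{3}{4}-\frac{k}{2}}\|f\|_{L^{1}}$. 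On $|\xi|\ge r_{0}$ one has $\theta(|\xi|)\equiv r_{0}^{2}$, so pulling out $e^{-2c_{2}r_{0}^{2}t}$ and applying Plancherel bounds this region by $Ce^{-Rt}\|\nabla^{k}f\|_{L^{2}}$ with $R=c_{2}r_{0}^{2}$; adding the two pieces and taking a square root gives \eqref{equ5-9}. Finally, since $G^{l}\ast f=\mathcal{F}^{-1}\big(\varphi(\xi)\hat{G}(\xi,t)\hat{f}(\xi)\big)$ is supported in $|\xi|\le R_{0}$, where $\theta(|\xi|)\ge (r_{0}/R_{0})^{2}|\xi|^{2}$, only the low-frequency estimate is needed and one obtains \eqref{equ5-10} with no exponential remainder.
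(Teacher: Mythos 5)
The paper does not prove this lemma at all: it is quoted verbatim from the reference [CLT] (Chen--Li--Tang), so there is no in-paper argument to compare against. Your reconstruction is the standard one for such $L^1$--$L^2$ Green's-function estimates (going back to Matsumura--Nishida and Kobayashi--Shibata), and it is correct in all essentials: the symbol $A(\xi)$ is right, the splitting into the divergence-free heat mode and the closed $3\times3$ system in $(\hat n,\hat\omega^{\parallel},\hat\vartheta)$ is right, the low-frequency eigenvalue expansions and the high-frequency asymptotics (two parabolic branches plus the branch tending to $-c^{2}/(2\mu+\lambda)$, consistent with $\det B(r)=-c^{2}\kappa r^{4}$) are right, and the Plancherel split at $|\xi|=r_{0}$ delivers exactly \eqref{equ5-9} and \eqref{equ5-10}. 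Two points deserve to be made explicit rather than asserted. First, on the compact intermediate range of $r$ you invoke continuity to get $\mathrm{Re}\,\lambda_{j}(r)\le-\delta$, but continuity only helps once you know each eigenvalue has strictly negative real part for every fixed $r>0$; this follows from the dissipation identity $\frac{d}{dt}\frac12|(\hat n,v,\hat\vartheta)|^{2}=-(2\mu+\lambda)r^{2}|v|^{2}-\kappa r^{2}|\hat\vartheta|^{2}$ together with a short check that no purely imaginary eigenvalue can occur for $r>0$. Second, you correctly flag that eigenvalue bounds for the non-normal matrix $B(r)$ do not by themselves bound $e^{tB(r)}$; the uniform control of the spectral projections (or a resolvent/contour argument near the finitely many degeneracies) is the one step that genuinely needs to be carried out, and your outline of how to do it is the standard and adequate one.
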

\begin{proposition} [\label{the5-2}\cite{CLT}]
Under the assumptions of Theorem \ref{the1-3}, then the linearized problem \eqref{equ5-1} has the decay rates for any large enough $t$,
\begin{equation}\label{equ5-11}
\left\|(\tilde{n},\tilde{\omega},\tilde{\vartheta})(t)\right\|^2_{L^{2}}\geq c_{1}(t+1)^{-\frac{3}{4}},
\end{equation}
where $c_{1}$ s a positive constant independent of time.
\end{proposition}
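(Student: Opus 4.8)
\noindent\textbf{Proof proposal for Proposition~\ref{the5-2}.}
The plan is a Fourier-side spectral analysis of the semigroup generated by the operator $L$, extracting the lower bound from the low-frequency region where~\eqref{equ1-9} lives; this is the scheme of~\cite{CLT}. First I would Fourier transform~\eqref{equ5-1} and apply the Hodge decomposition $\hat{\tilde\omega}=\hat{\tilde\omega}^{\perp}+\hat{\tilde\omega}^{\parallel}$, $\hat{\tilde\omega}^{\parallel}=|\xi|^{-2}(\xi\cdot\hat{\tilde\omega})\xi$: the transverse part $\hat{\tilde\omega}^{\perp}$ decouples as a pure heat flow and only adds to $|\hat{\tilde U}|^{2}$, while the scalar triple $V:=(\hat{\tilde n},\hat d,\hat{\tilde\vartheta})$ with $\hat d:=|\xi|^{-1}i\,\xi\cdot\hat{\tilde\omega}$ solves $\partial_{t}V=\mathcal{A}(\xi)V$, $\mathcal{A}(\xi)=|\xi|B-|\xi|^{2}\mathrm{diag}(0,2\mu+\lambda,\kappa)$, where
\begin{equation*}
B=\begin{pmatrix}0 & -c & 0\\ c & 0 & \sigma\\ 0 & -\sigma & 0\end{pmatrix}.
\end{equation*}
The leading matrix $B$ is skew-symmetric (hence normal), with eigenvalue $0$ — eigenvector $(\sqrt{\gamma-1},0,-1)$, i.e.\ the ``thermal'' combination $\hat{\tilde n}-\hat{\tilde\vartheta}/\sqrt{\gamma-1}$ up to a constant — and eigenvalues $\pm i\sqrt{c^{2}+\sigma^{2}}$ with eigenvectors $(c,\mp i\sqrt{c^{2}+\sigma^{2}},\sigma)$; here $\sqrt{c^{2}+\sigma^{2}}=\sqrt{\gamma R\theta_{\ast}}$ is the sound speed.

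Next I would carry out the low-frequency perturbation of the spectrum. For $|\xi|\le r_{0}$ with $r_{0}$ small, $\mathcal{A}(\xi)$ has three simple eigenvalues
\begin{equation*}
\lambda_{\pm}(\xi)=\pm i\sqrt{c^{2}+\sigma^{2}}\,|\xi|-\nu_{\mathrm{ac}}|\xi|^{2}+O(|\xi|^{3}),\qquad \lambda_{0}(\xi)=-\nu_{\mathrm{th}}|\xi|^{2}+O(|\xi|^{4}),
\end{equation*}
with dissipation rates $\nu_{\mathrm{ac}},\nu_{\mathrm{th}}>0$ (this is where $\tilde\mu>0$, $\tilde\lambda+\tfrac23\tilde\mu\ge0$ and $\tilde\kappa>0$ enter), and spectral projections $\Pi_{j}(\xi)$, $j\in\{0,+,-\}$, that are analytic $O(|\xi|)$-perturbations of the mutually orthogonal eigenprojections of $B$. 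Setting $V_{0}(\xi)=(\hat n_{0},\hat d_{0},\hat\vartheta_{0})(\xi)$ and $a_{j}(\xi)=\Pi_{j}(\xi)V_{0}(\xi)$, the four conditions in~\eqref{equ1-9} are exactly what controls these amplitudes on $|\xi|\le r_{0}$: there $|\hat d_{0}|\le|\hat\omega_{0}|\ll1$ and $\big|\hat n_{0}-\hat\vartheta_{0}/\sqrt{\gamma-1}\big|\ll1$, so $\hat\vartheta_{0}\approx\sqrt{\gamma-1}\,\hat n_{0}$, whence, using $\sigma=c\sqrt{\gamma-1}$ and the acoustic left-eigenvector of $B$, $|a_{\pm}(\xi)|\simeq|c\hat n_{0}+\sigma\hat\vartheta_{0}|\gtrsim|\hat n_{0}|\ge c_{0}$ after shrinking $r_{0}$; that is, the low-frequency datum carries a full-strength component in the acoustic eigenspace.

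Finally I would close by Plancherel. Since $|\hat{\tilde U}(\xi,t)|^{2}=|\hat{\tilde\omega}^{\perp}(\xi,t)|^{2}+|V(\xi,t)|^{2}$ with $V(\xi,t)=\sum_{j}a_{j}(\xi)e^{\lambda_{j}(\xi)t}$, expanding the modulus squared gives $|V(\xi,t)|^{2}=\sum_{j}|a_{j}|^{2}e^{2\mathrm{Re}\lambda_{j}t}+\sum_{j\ne k}\langle a_{j},a_{k}\rangle\,e^{(\overline{\lambda_{j}}+\lambda_{k})t}$, where the off-diagonal inner products are $O(|\xi|)$ by near-orthogonality and the exponentials there have modulus $e^{(\mathrm{Re}\lambda_{j}+\mathrm{Re}\lambda_{k})t}$; absorbing each cross term by Young's inequality into the two relevant diagonal terms (all nonnegative) leaves, for $|\xi|\le r_{0}$ small,
\begin{equation*}
|\hat{\tilde U}(\xi,t)|^{2}\ \ge\ |V(\xi,t)|^{2}\ \ge\ \tfrac12\sum_{j}|a_{j}(\xi)|^{2}e^{2\mathrm{Re}\lambda_{j}(\xi)t}\ \ge\ \tfrac12|a_{+}(\xi)|^{2}e^{-2\nu_{\mathrm{ac}}|\xi|^{2}t}\ \gtrsim\ c_{0}^{2}\,e^{-C_{0}|\xi|^{2}t}.
\end{equation*}
Integrating over $|\xi|\le r_{0}$ and rescaling $\xi\mapsto\xi/\sqrt t$ gives, for $t$ large, $\|(\tilde n,\tilde\omega,\tilde\vartheta)(t)\|_{L^{2}}^{2}\ge\int_{|\xi|\le r_{0}}|\hat{\tilde U}(\xi,t)|^{2}\,d\xi\gtrsim c_{0}^{2}(t+1)^{-3/2}$, which gives~\eqref{equ5-11}. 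I expect the main obstacle to lie not in this last chain but in its two ingredients: the low-frequency spectral expansion with strictly positive dissipation in every mode, and — more delicately — the passage from~\eqref{equ1-9} to the \emph{uniform} lower bound $|a_{+}(\xi)|\gtrsim c_{0}$ on a fixed ball, which requires knowing the $\xi$-dependent acoustic eigenprojection well enough; the skew-symmetry of $B$, which makes the $\xi=0$ eigenprojections orthogonal and explicit, is precisely what makes that step tractable. \hfill$\square$
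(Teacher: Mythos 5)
The paper does not actually prove Proposition~\ref{the5-2}: it is imported verbatim from the reference \cite{CLT} and used as a black box, so there is no in-paper argument to compare against. Your spectral-analysis proof is essentially the standard (and, as far as one can tell, the cited) route, and its skeleton is sound: the Hodge splitting isolating the incompressible heat mode, the reduction to the $3\times3$ system $\partial_t V=\bigl(|\xi|B-|\xi|^2\mathrm{diag}(0,2\mu+\lambda,\kappa)\bigr)V$ with $B$ skew-symmetric, the identification of the kernel direction of $B$ with $\hat n-\hat\vartheta/\sqrt{\gamma-1}$ (which is exactly the quantity assumption \eqref{equ1-9} forces to be small, so that the datum is pushed into the acoustic eigenspaces), the positivity of $\nu_{\mathrm{ac}},\nu_{\mathrm{th}}$ from $2\tilde\mu+\tilde\lambda\ge\frac43\tilde\mu>0$ and $\tilde\kappa>0$, and the final Plancherel computation on $|\xi|\lesssim t^{-1/2}$ all check out.

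Two points need repair. First, the cross-term step as written is wrong: you cannot conclude $|V|^2\ge\frac12\sum_j|a_j|^2e^{2\mathrm{Re}\lambda_j t}$ by ``absorbing each cross term into the two relevant diagonal terms,'' because under \eqref{equ1-9} the thermal amplitude $a_0(\xi)$ is itself $\ll1$, so the diagonal term $|a_0|^2e^{2\mathrm{Re}\lambda_0 t}$ has nothing to spare; moreover the plain Young bound $|\langle a_j,a_k\rangle|e^{(\mathrm{Re}\lambda_j+\mathrm{Re}\lambda_k)t}\le\frac12\bigl(|a_j|^2e^{2\mathrm{Re}\lambda_jt}+|a_k|^2e^{2\mathrm{Re}\lambda_kt}\bigr)$ would eat the diagonal entirely. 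The correct (and easy) fix is to keep only the acoustic diagonal term and bound the total cross contribution in absolute terms, $|V(\xi,t)|^2\ge|a_+(\xi)|^2e^{2\mathrm{Re}\lambda_+(\xi)t}-C|\xi|\,\|U_0\|_{L^1}^2\,e^{-\nu_{\min}|\xi|^2t}$, then restrict to $|\xi|\le\delta(t+1)^{-1/2}$ where the first term is $\ge\frac12 c_0'^2$ and the error is $O(\delta(t+1)^{-1/2})$; integration over that ball gives the $(t+1)^{-3/2}$ lower bound for the squared norm. Second, be aware that \eqref{equ5-11} as printed is internally inconsistent (squared norm with exponent $-\frac34$); the use made of it in \eqref{equ5-32} shows the intended statement is $\|(\tilde n,\tilde\omega,\tilde\vartheta)(t)\|_{L^2}\ge c_1(t+1)^{-3/4}$, which is exactly what your (repaired) argument yields.
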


\noindent \textbf{Proof of Theorem \ref{the1-3}}.  \textbf{Step 1}. We will obtain the decay estimate of the solution for the case $k=0,1$.
\begin{lemma}\label{lem5-3}
Under the assumptions of Theorem \ref{the1-3}, for the case $k=0,1$, then
\begin{equation}\label{equ5-12}
\left\|\nabla^{k}(n,\omega,\vartheta)\right\|_{H^{N-k}}\leq C(t+1)^{-\frac{3}{4}-\frac{k}{2}}.
\end{equation}
\end{lemma}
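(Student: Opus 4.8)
The plan is to run the Duhamel representation \eqref{equ5-4} against the Green's-function decay of Lemma \ref{lem5-1} to control the bare $L^{2}$ decay of $\nabla^{k}(n,\omega,\vartheta)$ for $k=0,1$, and then to promote this to the full $H^{N-k}$ norm by feeding it back into the basic energy inequality \eqref{equ3-12}. The nonlinear source is controlled by the almost optimal rates already furnished by Theorem \ref{the1-2}: the hypotheses give $(\rho_{0}-\rho_{\ast},u_{0},\theta_{0}-\theta_{\ast})\in H^{N}\cap L^{1}\subset H^{N}\cap\dot{H}^{-s}$ for every $s\in[0,\frac{3}{2})$ (since $L^{1}\cap L^{2}\subset L^{p}$ for $p\in[1,2]$ together with Lemma \ref{lem2-4}), so after fixing a small $\epsilon_{0}>0$ and setting $s=\frac{3}{2}-\epsilon_{0}$, Theorem \ref{the1-2} yields
\[
\left\|\nabla^{l}(n,\omega,\vartheta)(\tau)\right\|_{H^{N-l}}^{2}\leq C(\tau+1)^{-(l+s)},\qquad l=0,1,\dots,N.
\]

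First I would estimate $F=(S_{1},S_{2},S_{3})^{tr}$. Each term of $F$ is the product of a ``zeroth-order'' factor (one of $n,\omega,\vartheta$, or $h(n),g(n)$; the latter two are handled by Lemma \ref{lem2-3}, so that $h(n)$ behaves like $n$ and $g(n)$ stays bounded) and a factor carrying at least one derivative of $(n,\omega,\vartheta)$. Putting the zeroth-order factor in $L^{\infty}$ via $H^{2}\hookrightarrow L^{\infty}$ (here $N\geq3$) and the derivative factor in $L^{2}$, invoking H\"older's inequality and the Kato--Ponce inequality, and using the rates above — a zeroth-order factor decays like $(\tau+1)^{-s/2}$ and a $j$-th order derivative factor like $(\tau+1)^{-(j+s)/2}$ with $j\geq1$ — one obtains, for $k=0,1$,
\[
\left\|F(\tau)\right\|_{L^{1}}+\left\|\nabla^{k}F(\tau)\right\|_{L^{2}}\leq C(\tau+1)^{-\frac{1}{2}-s},
\]
the slowest case being the quadratic terms with exactly one derivative. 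As $\frac{1}{2}+s>1$, these bounds are integrable in time. Now from \eqref{equ5-4} and Lemma \ref{lem5-1} (with $k=0,1$),
\[
\left\|\nabla^{k}(n,\omega,\vartheta)(t)\right\|_{L^{2}}\leq C(t+1)^{-\frac{3}{4}-\frac{k}{2}}\bigl(\|U_{0}\|_{L^{1}}+\|U_{0}\|_{L^{2}}\bigr)+\int_{0}^{t}\Bigl[C(t-\tau+1)^{-\frac{3}{4}-\frac{k}{2}}\|F(\tau)\|_{L^{1}}+Ce^{-R(t-\tau)}\|\nabla^{k}F(\tau)\|_{L^{2}}\Bigr]d\tau .
\]
Splitting the integral at $\tau=t/2$ (on $[0,t/2]$ the convolution kernel is $\leq C(t+1)^{-\frac{3}{4}-\frac{k}{2}}$, resp.\ $\leq Ce^{-Rt/2}$, while the source is time-integrable; on $[t/2,t]$ the source is $\leq C(t+1)^{-\frac{1}{2}-s}$ and the remaining kernel integrates to $C(t+1)^{\frac{1}{4}-\frac{k}{2}}$, resp.\ to a constant), and noting $-\frac{1}{4}-s\leq-\frac{3}{4}$ and $-\frac{1}{2}-s\leq-\frac{5}{4}$ for $s$ close to $\frac{3}{2}$, the nonlinear contribution is $\leq C(t+1)^{-\frac{3}{4}-\frac{k}{2}}$. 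Hence $\left\|\nabla^{k}(n,\omega,\vartheta)(t)\right\|_{L^{2}}\leq C(t+1)^{-\frac{3}{4}-\frac{k}{2}}$ for $k=0,1$.

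Finally I would upgrade the $L^{2}$ bound to the $H^{N-k}$ bound. Taking $l=k$, $m=N$ in \eqref{equ3-12} and using that the dissipation $\Gamma_{k}^{N}$ controls $\Psi_{k}^{N}$ up to the lowest-order density term, so that $\Psi_{k}^{N}\leq C\bigl(\Gamma_{k}^{N}+\|\nabla^{k}(n,\omega,\vartheta)\|_{L^{2}}^{2}\bigr)$, the inequality \eqref{equ3-12} becomes
\[
\frac{d}{dt}\Psi_{k}^{N}(t)+c\,\Psi_{k}^{N}(t)\leq C\left\|\nabla^{k}(n,\omega,\vartheta)(t)\right\|_{L^{2}}^{2}\leq C(t+1)^{-\frac{3}{2}-k}.
\]
Gronwall's inequality, together with $\Psi_{k}^{N}(0)\leq C\|U_{0}\|_{H^{N}}^{2}$ and $e^{-ct}\leq C(t+1)^{-\frac{3}{2}-k}$, then gives $\Psi_{k}^{N}(t)\leq C(t+1)^{-\frac{3}{2}-k}$, that is $\left\|\nabla^{k}(n,\omega,\vartheta)(t)\right\|_{H^{N-k}}\leq C(t+1)^{-\frac{3}{4}-\frac{k}{2}}$, which is \eqref{equ5-12}; in particular this shows that the weighted quantity $M(t)$ in \eqref{equ5-8} is bounded (the case $k=0$). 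The main difficulty is the term-by-term nonlinear bookkeeping in the second paragraph, especially for the second-order terms $h(n)\Delta\omega$ and $\kappa h(n)\Delta\vartheta$ and the quadratic-gradient terms $g(n)(2\mu|D\omega|^{2}+\lambda({\rm div}\,\omega)^{2})$: one must split each product so that both factors are measured by norms that Theorem \ref{the1-2} makes decay, which is exactly what ensures the Duhamel time-integral converges without any loss of the sharp rate.
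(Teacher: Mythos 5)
Your proof is correct, but it closes the estimate by a genuinely different mechanism than the paper. The paper's proof is a bootstrap: it introduces the time-weighted functional $M(t)$ from \eqref{equ5-8}, splits $U$ into low and high frequencies, absorbs the high-frequency part into the dissipation, bounds the low-frequency part by $G^{l}\ast U_{0}+\int G^{l}(t-\tau)\ast F\,d\tau$ with $\|F\|_{L^{1}}$ estimated in terms of $M(t)$ itself, and then closes the resulting self-referential inequality $M(t)^{2}\leq C(A_{0}^{2}+\eta_{0}^{2}M(t)^{2})$ by the smallness of $\eta_{0}$; the $k=1$ case is then run a second time using the $k=0$ output to bound $\|F\|_{L^{1}}$ and $\|\nabla F\|_{L^{1}}$. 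You instead observe that the hypotheses already place the data in $\dot{H}^{-s}$ for every $s<\tfrac{3}{2}$ (via $L^{1}\cap L^{2}\subset L^{p}$ and Lemma \ref{lem2-4}, exactly as in Remark 1.4(i)), so Theorem \ref{the1-2} with $s=\tfrac{3}{2}-\epsilon_{0}$ supplies an a priori, almost-optimal decay of every factor in $F$; this makes $\|F\|_{L^{1}}+\|\nabla^{k}F\|_{L^{2}}\lesssim(\tau+1)^{-\frac{1}{2}-s}$ integrable with room to spare, and a single, non-self-referential pass through Duhamel with the full kernel estimate \eqref{equ5-9} gives the sharp $L^{2}$ rate; the upgrade to $H^{N-k}$ via $\Psi_{k}^{N}\lesssim\Gamma_{k}^{N}+\|\nabla^{k}U\|_{L^{2}}^{2}$ and Gronwall is sound and plays the same role as the paper's \eqref{equ5-14}. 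What each approach buys: yours is linear (no fixed-point closing, no frequency splitting, and the convolution integrals converge comfortably since $s$ can be taken close to $\tfrac{3}{2}$), but it leans on the full strength of Theorem \ref{the1-2} near the endpoint; the paper's is self-contained within Section 5, needs only the uniform bounds of Theorem \ref{the1-1} plus the linear low-frequency decay, and produces the explicit bound $M(t)\leq CA_{0}$ in terms of the $L^{1}$ size of the data. Both arguments yield \eqref{equ5-12}, and your version feeds into Steps 2 and 3 of the paper without modification.
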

\begin{proof}
Let $l=0$, $m=N$ in \eqref{equ3-12}, we obtain
\begin{equation}\label{equ5-13}
\frac{d}{dt}\Psi_{0}^{N}(t)+\left\|\nabla n\right\|_{H^{N-1}}+\left\|\nabla \omega\right\|_{H^{N-1}}+\left\|\nabla \vartheta\right\|_{H^{N-1}}\leq0,
\end{equation}
then
\begin{equation*}
\frac{d}{dt}\Psi_{0}^{N}(t)+\Psi_{0}^{N}(t)\leq C\left\|(n,\omega,\vartheta)(t)\right\|_{L^{2}}.
\end{equation*}
Based on the fact
\begin{equation*}
\left\|(n^{h},\omega^{h},\vartheta^{h})(t)\right\|_{L^{2}}\leq C\left\|\nabla(n,\omega,\vartheta)(t)\right\|_{L^{2}},
\end{equation*}
we have
\begin{equation}\label{equ5-14}
\frac{d}{dt}\Psi_{0}^{N}(t)+C_{1}\Psi_{0}^{N}(t)\leq C\left\|(n^{l},\omega^{l},\vartheta^{l})(t)\right\|^{2}_{L^{2}},
\end{equation}
where $C_{1}$ is a positive constant.

According to the \eqref{equ5-4}, we have
\begin{equation}\label{equ5-15}
U^{l}(x,t)=G^{l}\ast U_{0}+\int_0^tG^{l}(t-\tau)\ast F(\tau)d\tau.
\end{equation}
Then, applying Lemma \ref{lem5-1} together with H\"{o}lder's inequality and \eqref{equ5-8}, we obtain
\begin{equation}\label{equ5-16}
\begin{split}
&\left\|(n^{l},\omega^{l},\vartheta^{l})(t)\right\|_{L^{2}}\\
\leq&C(t+1)^{-\frac{3}{4}}\left\|(n_{0},\omega_{0},\vartheta_{0})(t)\right\|_{L^{1}}+C\int_0^t(t-\tau+1)^{-\frac{3}{4}}\left\|F(t)\right\|_{L^{1}}d\tau\\
\leq& CA_{0}(t+1)^{-\frac{3}{4}}+C\int_0^t(t-\tau+1)^{-\frac{3}{4}}\left\|(n,\omega,\vartheta)\right\|_{L^{2}}
(\left\|\nabla n\right\|_{L^{2}}+\left\|\nabla(\omega,\vartheta)\right\|_{H^{1}})d\tau\\
\leq&C(t+1)^{-\frac{3}{4}}(A_{0}+\eta_{0}M(t)).
\end{split}
\end{equation}
By substituting \eqref{equ5-16} into \eqref{equ5-14}, we have
\begin{equation*}
\frac{d}{dt}\Psi_{0}^{N}(t)+C_{1}\Psi_{0}^{N}(t)\leq C(t+1)^{-\frac{3}{2}}\left(A_{0}+\eta_{0}M(t)\right)^{2}.
\end{equation*}
Therefore, we get
\begin{equation*}
\begin{split}
\Psi_{0}^{N}(t)\leq& e^{-C_{1}t}\Psi_{0}^{N}(0)+C\int_0^te^{-C_{1}(t-\tau)}(\tau+1)^{-\frac{3}{2}}(A_{0}+\eta_{0}M(t))^{2}d\tau\\
\leq& C(t+1)^{-\frac{3}{2}}(A^{2}_{0}+\eta^{2}_{0}(M(t))^{2}).
\end{split}
\end{equation*}
So that
\begin{equation}\label{equ5-17}
(t+1)^{\frac{3}{2}}\left\|(n,\omega,\vartheta)\right\|^{2}_{H^N}\leq C(A^{2}_{0}+\eta^{2}_{0}(M(t))^{2}),
\end{equation}
which implies
\begin{equation}\label{equ5-18}
M(t)^{2}\leq C(A^{2}_{0}+\eta^{2}_{0}(M(t))^{2}).
\end{equation}
Since $\eta_{0}$ is small enough, we have that
\begin{equation}\label{equ5-19}
M(t)\leq CA_{0}.
\end{equation}
Thus,
\begin{equation}\label{equ5-20}
\left\|(n,\omega,\vartheta)\right\|_{H^N}\leq CA_{0}(t+1)^{-\frac{3}{4}}\leq C(t+1)^{-\frac{3}{4}}.
\end{equation}

For the case $k=1$, similar to the proof of \eqref{equ5-20}, we also have
\begin{equation}\label{equ5-21}
\frac{d}{dt}\Psi_{1}^{N}(t)+C_{2}\Psi_{1}^{N}(t)\leq C\left\|\nabla(n^{l},\omega^{l},\vartheta^{l})(t)\right\|^{2}_{L^{2}}.
\end{equation}
By H\"{o}lder's inequality and Lemma \ref{lem5-1}, it yields that
\begin{equation}\label{equ5-22}
\begin{split}
\left\|\nabla(n^{l},\omega^{l},\vartheta^{l})(t)\right\|_{L^{2}}
\leq &C(t+1)^{-\frac{5}{4}}\left\|(n_{0},\omega_{0},\vartheta_{0})(t)\right\|_{L^{1}}+C\int_0^t(t-\tau+1)^{-\frac{5}{4}}\left\|F(t)\right\|_{L^{1}}d\tau\\
&+C\int_0^t(t-\tau+1)^{-\frac{3}{4}}\left\|\nabla F(t)\right\|_{L^{1}}d\tau.
\end{split}
\end{equation}
According to H\"{o}lder's inequality and \eqref{equ5-20}, we get that
\begin{equation}\label{equ5-23}
\begin{split}
\left\| F(t)\right\|_{L^{1}}\leq& C\Big(\left\| n\right\|_{L^{2}}\left\| \nabla\omega\right\|_{L^{2}}
+\left\| \omega\right\|_{L^{2}}\left\| \nabla n\right\|_{L^{2}}
+\left\| \omega\right\|_{L^{2}}\left\| \nabla\omega\right\|_{L^{2}}
\\&+\left\| n\right\|_{L^{2}}\left\| \nabla^{2}\omega\right\|_{L^{2}}
+\left\| n\right\|_{L^{2}}\left\| \nabla n\right\|_{L^{2}}
+\left\| \vartheta\right\|_{L^{2}}\left\| \nabla n\right\|_{L^{2}}
\\&+\left\| \omega\right\|_{L^{2}}\left\| \nabla\vartheta\right\|_{L^{2}}
+\left\| \vartheta\right\|_{L^{2}}\left\| \nabla\omega\right\|_{L^{2}}
+\left\| \nabla\omega\right\|_{L^{2}}\left\| \nabla\omega\right\|_{L^{2}}
\\&+\left\| n\right\|_{L^{2}}\left\| \nabla^{2}\vartheta\right\|_{L^{2}}\Big)\\
\leq& C(t+1)^{-\frac{3}{2}}.
\end{split}
\end{equation}
Similarly, we also obtain
\begin{equation}\label{equ5-24}
\left\|\nabla F(t)\right\|_{L^{1}}\leq C(t+1)^{-\frac{3}{2}}.
\end{equation}
By substituting \eqref{equ5-23} and \eqref{equ5-24} into \eqref{equ5-22}, it implies that
\begin{equation*}
\left\|\nabla(n^{l},\omega^{l},\vartheta^{l})(t)\right\|_{L^{2}}\leq C(t+1)^{-\frac{5}{4}}.
\end{equation*}
By \eqref{equ5-21}, it holds
\begin{equation}\label{equ5-25}
\left\|\nabla(n,\omega,\vartheta)\right\|_{H^{N-1}}\leq C(t+1)^{-\frac{5}{4}}.
\end{equation}
Consequently, the proof of this lemma is completed.
\end{proof}

\noindent \textbf{Step 2}. we will use our previous results to prove the decay of the higher-order norm.

Suppose for $k=m$ $(1\leq m\leq N-2)$, we have the following decay rate
\begin{equation}\label{equ5-26}
\left\|\nabla^{m}(n,\omega,\vartheta)\right\|_{H^{N-m}}\leq C(t+1)^{-\frac{3}{4}-\frac{m}{2}}.
\end{equation}
Multiplying the inequality \eqref{equ3-12} by $(t+1)^{\frac{3}{2}+m+\epsilon_{0}}$, it holds that
\begin{equation*}
\begin{split}
\frac{d}{dt}\Big[(t+1)^{\frac{3}{2}+m+\epsilon_{0}}\Psi_{m}^{N}(t)\Big]+(t+1)^{\frac{3}{2}+m+\epsilon_{0}}\Gamma_{m}^{N}(t)
\leq C(t+1)^{\frac{1}{2}+m+\epsilon_{0}}\Psi_{m}^{N}(t).
\end{split}
\end{equation*}
Integrating the above inequality in time, we deduct that
\begin{equation}\label{equ5-27}
\begin{split}
&(t+1)^{\frac{3}{2}+m+\epsilon_{0}}\Psi_{m}^{N}(t)+\int_0^t(\tau+1)^{\frac{3}{2}+m+\epsilon_{0}}\Gamma_{m}^{N}(\tau)d\tau\\
\leq&\Psi_{m}^{N}(0)+ C\int_0^t(\tau+1)^{\frac{1}{2}+m+\epsilon_{0}}\Psi_{m}^{N}(\tau)d\tau\\
\leq&\Psi_{m}^{N}(0)+ C\int_0^t(\tau+1)^{-1+\epsilon_{0}}d\tau\\
\leq&C(t+1)^{\epsilon_{0}}.
\end{split}
\end{equation}
For $k=m+1$, by multiplying the factor $(t+1)^{\frac{5}{2}+m+\epsilon_{0}}$ to \eqref{equ3-12} and integrating it in time, we have
\begin{equation*}
\begin{split}
&(t+1)^{\frac{5}{2}+m+\epsilon_{0}}\Psi_{m+1}^{N}(t)+\int_0^t(\tau+1)^{\frac{5}{2}+m+\epsilon_{0}}\Gamma_{m+1}^{N}(\tau)d\tau\\
\leq&\Psi_{m+1}^{N}(0)+ C\int_0^t(\tau+1)^{\frac{3}{2}+m+\epsilon_{0}}\Psi_{m+1}^{N}(\tau)d\tau\\
\leq&\Psi_{m+1}^{N}(0)+ C\int_0^t(\tau+1)^{\frac{3}{2}+m+\epsilon_{0}}\Gamma_{m}^{N}(\tau)d\tau\\
\leq&C(t+1)^{\epsilon_{0}}.
\end{split}
\end{equation*}
Therefore, we get
\begin{equation}\label{equ5-28}
\left\|\nabla^{m+1}(n,\omega,\vartheta)\right\|_{H^{N-m-1}}\leq C(t+1)^{-\frac{3}{4}-\frac{m+1}{2}},\quad 1\leq m\leq N-2.
\end{equation}

Finally, let $s=\frac{3}{2}$, similar to the proof of formula \eqref{equ4-10}, the decay estimate of the $N-th$ derivative can be obtained, that is
\begin{equation}\label{equ5-29}
\left\|\nabla^{N}(n,\omega,\vartheta)\right\|_{L^{2}}\leq C(t+1)^{-\frac{3}{4}-\frac{N}{2}}.
\end{equation}

\noindent \textbf{Step 3}. we will establish the lower bound of the decay rate for the global solution. Due to \eqref{equ5-4}, we have
\begin{equation}\label{equ5-30}
\left\|U(t)\right\|_{L^{2}}\geq \left\|G\ast U(0)\right\|_{L^{2}}-\int_0^t\left\|G(t-\tau)\ast F(\tau)\right\|_{L^{2}}d\tau.
\end{equation}
From using H\"{o}lder's inequality, \eqref{equ5-9}, \eqref{equ5-12} and \eqref{equ5-23}, it follows that
\begin{equation}\label{equ5-31}
\begin{split}
&\int_0^t\left\|G(t-\tau)\ast F(\tau)\right\|_{L^{2}}d\tau\\
\leq& C\int_0^t(t-\tau+1)^{-\frac{3}{4}}\left\|F(\tau)\right\|_{L^{1}}+e^{-C(t-\tau)}\left\|F(\tau)\right\|_{L^{2}}d\tau\\
\leq& C\int_0^t(t-\tau+1)^{-\frac{3}{4}}\left\|(n,\omega,\vartheta)\right\|_{H^{1}}\left\|\nabla(n,\omega,\vartheta)\right\|_{H^{1}}d\tau\\
&+C\int_0^te^{-C(t-\tau)}\left\|(n,\omega,\vartheta,\nabla\omega)\right\|_{L^{\infty}}\left\|\nabla(n,\omega,\vartheta)\right\|_{H^{1}}d\tau\\
\leq& C\eta_{0}\int_0^t(t-\tau+1)^{-\frac{3}{4}}(\tau+1)^{-\frac{5}{4}}d\tau\\
\leq& C\eta_{0}(t+1)^{-\frac{3}{4}}.
\end{split}
\end{equation}
Since $\eta_{0}$ is sufficiently small, by substituting \eqref{equ5-11} and \eqref{equ5-31} into \eqref{equ5-30}, we have
\begin{equation}\label{equ5-32}
\left\|U(t)\right\|_{L^{2}}\geq c_{1}(t+1)^{-\frac{3}{4}}-C\eta_{0}(t+1)^{-\frac{3}{4}}\geq c_{2}(t+1)^{-\frac{3}{4}},
\end{equation}
where $c_{2}$ is a positive constant. If $t$ is large enough, then
\begin{equation}\label{equ5-33}
\begin{split}
\left\|\Lambda^{-1}U(t)\right\|_{L^{2}}\leq& \left\|\Lambda^{-1}U^{l}(t)\right\|_{L^{2}}+\left\|\Lambda^{-1}U^{h}(t)\right\|_{L^{2}}\\
\leq& C(t+1)^{-\frac{1}{4}}+\int_0^t(t-\tau+1)^{-\frac{1}{4}}\left\|F(\tau)\right\|_{L^{1}}d\tau+\left\|U^{h}(t)\right\|_{L^{2}}\\
\leq& C(t+1)^{-\frac{1}{4}}.
\end{split}
\end{equation}
According to Lemma \ref{lem2-5}, we obtain
\begin{equation}\label{equ5-34}
\left\|U(t)\right\|_{L^{2}}\leq C\left\|\Lambda^{-1}U(t)\right\|^{\frac{k}{k+1}}_{L^{2}}\left\|\nabla^{k}U(t)\right\|^{\frac{1}{k+1}}_{L^{2}},
\end{equation}
which implies that
\begin{equation}\label{equ5-35}
\left\|\nabla^{k}U(t)\right\|_{L^{2}}\geq c_{\ast}(t+1)^{-\frac{3}{4}-\frac{k}{2}},
\end{equation}
where $c_{\ast}$ is a positive constant. So that, we get
\begin{equation}\label{equ5-36}
\left\|\nabla^{k}(\rho-\rho_{\ast},u,\theta-\theta_{\ast})\right\|_{L^{2}}\geq c_{\ast}(t+1)^{-\frac{3}{4}-\frac{k}{2}}.
\end{equation}
Therefore, we complete the proof of Theorem \ref{the1-3}.  \hfill $\Box$

\section*{Data availability}
Data sharing is not applicable to this article as no datasets were generated or analysed during the current study.

\section*{Conflict of interest} The authors declare that there is no conflict of interest. We also declare that this manuscript has no associated data.

\section*{Acknowledgements}
We would like to express our gratitude to Dr. Tang Houzhi from Capital Normal University for his answers to our confusion, as well as for his feedback and suggestions on the article, which have helped us to improve our manuscript. This work was supported by the National Natural Science Foundation of China (No. 12071098) and the Fundamental Research Funds for the Central Universities (No. 2022FRFK060022).


\begin{thebibliography}{[a]}

\bibitem{CLT}Y.Z. Chen, H.L. Li, H.Z. Tang, Global existence and optimal time-decay rates of 3D
non-isentropic compressible Navier-Stokes-Allen-Cahn system, J. Differential Equations, 334 (2022) 157--193.

\bibitem{DUYZ}R.J. Duan, S. Ukai, T. Yang, H.J. Zhao, Optimal convergence rates for compressible Navier-Stokes equations
with potential forces, Math. Models Methods Appl. Sci., 17 (2007) 737-758.

\bibitem{GWY}J.C. Gao, Z.Z. Wei, Z.A. Yao, Decay of strong solution for the compressible Navier-Stokes equations with large initial data,
Nonlinear Anal., 213 (2021) 112494.

\bibitem{HHW}L.B. He, J.C. Huang, C. Wang, Large-time behavior for compressible Navier-Stokes-Fourier system in the whole space, J. Math. Fluid Mech.,
24 (2022) 31.

\bibitem{HL1}X.D. Huang, J. Li, Global classical and weak solutions to the three-dimensional full compressible Navier-Stokes system with vacuum and large oscillations, Arch. Ration. Mech. Anal., 227 (2018) 995-1059.

\bibitem{HL2}X.D. Huang, J. Li, Serrin-type blowup criterion for viscous, compressible, and heat conducting Navier-Stokes
and magnetohydrodynamic flows, Comm. Math. Phys., 324 (2013) 147--171.

\bibitem{Ji1}S. Jiang, Global spherically symmetric solutions to the equations of a viscous polytropic
ideal gas in an exterior domain, Comm. Math. Phys., 178 (1996) 339--374.

\bibitem{Ji2}S. Jiang, Global smooth solutions of the equations of a viscous, heat-conducting one-dimensional gas with density-dependent viscosity, Math. Nachr., 190 (1998) 169--183.

\bibitem{Ji3}S. Jiang, Large-time behavior of solutions to the equations of a viscous polytropic ideal gas, Ann. Mat. Pura Appl., 175 (1998) 253--275.

\bibitem{Ji4}S. Jiang, Large-time behavior of solutions to the equations of a one-dimensional viscous polytropic ideal gas
in unbounded domains, Comm. Math. Phys., 200 (1999) 181--193.

\bibitem{JZ} S. Jiang, P. Zhang, Global weak solutions to the Navier-Stokes equations for a 1D viscous polytropic ideal gas, Quart. Appl. Math., 61 (2003) 435--449.

\bibitem{Ka} B. Kawohl, Global existence of large solutions to initial-boundary value problems for a viscous, heat-conducting, one-dimensional real gas, J. Differential Equations, 58 (1985) 76--103.

\bibitem{KP}T. Kato, G. Ponce, Commutator estimates and the Euler and Navier-Stokes equations, Comm. Pure Appl. Math., 41 (1988) 891--907.

\bibitem{KS}T. Kobayashi, Y. Shibata, Decay estimates of solutions for the equations of motion of compressible viscous
and heat-conductive gases in an exterior domain in $\mathbb{R}^{3}$, Comm. Math. Phys., 200 (1999) 621--659.

\bibitem{Li1}J.K. Li, Global well-posedness of the one-dimensional compressible Navier-Stokes equations with constant
heat conductivity and nonnegative density, SIAM J. Math. Anal., 51 (2019) 3666--3693.

\bibitem{Li2}J.K. Li, Global well-posedness of non-heat conductive compressible Navier-Stokes equations in 1D, Nonlinearity, 33 (2020) 2181--2210.

\bibitem{LL}J. Li, Z.L. Liang, Some uniform estimates and large-time behavior of solutions to one-dimensional compressible
Navier-Stokes system in unbounded domains with large data, Arch. Ration. Mech. Anal., 220 (2016) 1195--1208.

\bibitem{LX1}J.K. Li, Z.P. Xin, Entropy bounded solutions to the one-dimensional compressible Navier-Stokes equations
with zero heat conduction and far field vacuum, Adv. Math., 361 (2020) 106923.

\bibitem{MN1}A. Matsumura, T. Nishida, The initial value problem for the equations of motion of compressible viscous and
heat-conductive fluids, Proc. Japan Acad. Ser. A Math. Sci., 55 (1979) 337--342.

\bibitem{MN2}A. Matsumura, T. Nishida, The initial value problem for the equations of motion of viscous and heat-conductive gases, J. Math. Kyoto Univ., 20 (1980) 67--104.

\bibitem{MN3}A. Matsumura, T. Nishida, Initial-boundary value problems for the equations of motion of compressible viscous and heat-conductive fluids, Comm. Math. Phys., 89 (1983) 445--464.

\bibitem{Ni}L. Nirenberg, On elliptic partial differential equations, Ann. Scuola Norm. Sup. Pisa Cl. Sci., 13 (1959) 115--162.

\bibitem{PG}X.K. Pu, B.L. Guo, Global existence and semiclassical limit for quantum hydrodynamic equations with viscosity and heat conduction, Kinet. Relat. Models, 9 (2016) 165--191.

\bibitem{Po}G. Ponce, Global existence of small solutions to a class of nonlinear evolution equations, Nonlinear
Anal., 9 (1985) 399--418.

\bibitem{WZ1}H.Y. Wen, C.J. Zhu, Global classical large solutions to Navier-Stokes equations for viscous compressible and heat-conducting fluids with vacuum, SIAM J. Math. Anal., 45 (2013) 431--468.

\bibitem{WZ2} H.Y. Wen, C.J. Zhu, Global symmetric classical solutions of the full compressible Navier-Stokes equations with vacuum and large initial data, J. Math. Pures Appl., 102 (2014) 498--545.

\bibitem{WZ3}H.Y. Wen, C.J. Zhu, Global solutions to the three-dimensional full compressible Navier-Stokes equations with
vacuum at infinity in some classes of large data, SIAM J. Math. Anal., 49 (2017) 162--221.

\bibitem{Zh}X.P. Zhao, Global well-posedness and decay estimates for three-dimensional compressible Navier-Stokes-Allen-Cahn systems, Proc. Roy. Soc. Edinburgh Sect. A, 152 (2022) 1291--1322.

\end{thebibliography}
\end{document}